\tikzstyle{startstop} = [rectangle, rounded corners, minimum width=3cm, minimum height=1cm,text centered, draw=black]
\tikzstyle{process} = [rectangle, rounded corners, minimum width=3cm, minimum height=1cm, text centered, draw=black]
\tikzstyle{arrow} = [thick,->,>=stealth]
\newtheorem{theorem}{Theorem}[section]
\newtheorem{lemma}[theorem]{Lemma}
\newtheorem{proposition}[theorem]{Proposition}
\theoremstyle{definition}
\theoremstyle{remark}
\newtheorem{claim}[theorem]{Claim}
\numberwithin{equation}{section}
\newcommand{\R}{{\mathbb {R}}}
\newcommand{\Q}{{\mathbb Q}}
\newcommand{\N}{{\mathbb N}}
\newcommand{\Z}{{\mathbb Z}}
\newcommand{\Sp}{{\mathbb S}}
\newcommand{\cosec}{\mathrm{cosec}}
\begin{document}
	\title{Twisted bilinear spherical maximal functions}
	
	\author[Bhojak]{Ankit Bhojak}
	\address{Ankit Bhojak\\
		Department of Mathematics\\
		Indian Institute of Science Education and Research Bhopal\\
		Bhopal-462066, India.}
	\email{ankitb@iiserb.ac.in}
	
	\author[Choudhary]{Surjeet Singh Choudhary}
	\address{Surjeet Singh Choudhary\\
		Department of Mathematical Sciences\\
		Indian Institute of Science Education and Research Mohali\\
		Mohali-140306, India.}
	\email{surjeet19@iisermohali.ac.in}
	
	\author[Shrivastava]{Saurabh Shrivastava}
	\address{Saurabh Shrivastava\\
		Department of Mathematics\\
		Indian Institute of Science Education and Research Bhopal\\
		Bhopal-462066, India.}
	\email{saurabhk@iiserb.ac.in}
	
	\thanks{}
	\begin{abstract} We obtain $L^p-$estimates for the full and lacunary maximal functions associated to the twisted bilinear spherical averages given by
		\[\mathfrak{A}_t(f_1,f_2)(x,y)=\int_{\mathbb S^{2d-1}}f_1(x+tz_1,y)f_2(x,y+tz_2)\;d\sigma(z_1,z_2),\;t>0,\]
	for all dimensions $d\geq1$. We show that the estimates for such operators in dimensions $d\geq2$ essentially relies on the method of slicing. The bounds for the lacunary maximal function in dimension one is more delicate and requires a trilinear smoothing inequality which is based on an appropriate sublevel set estimate in this context.
	\end{abstract}
	\subjclass[2010]{Primary 42B15, 42B25}	
	\maketitle
	\tableofcontents
	
	\section{Introduction and main results}
	The goal of this paper is to establish $L^p$-estimates for twisted bilinear spherical maximal functions. This study is motivated by recent papers on bilinear spherical maximal functions and bilinear singular integral operators along polynomials. First, we briefly discuss some of the recent works which are relevant to the current paper, then we introduce the twisted bilinear spherical maximal functions and provide statements of the main results obtained in the paper. 
	
	For any $d\geq2$, $\sigma=\sigma_{d-1}$ denote the normalized surface measure on the unit sphere $\mathbb S^{d-1}\subset \R^d$. For $f_1,f_2\in\mathscr{S}(\R^{d})$ (the space of Schwartz functions in $\R^d$), and $d\geq 1$, the full bilinear spherical maximal function is defined by 
	$$\mathcal{M}(f,g)(x):=\sup_{t>0}\left|\int_{\mathbb S^{2d-1}}f(x-ty_1)g(x-ty_2)\;d\sigma(y_1,y_2)\right|.$$
	This operator was first studied by Geba et al. \cite{GGIPS}. Jeong-Lee~\cite{JeongLee} introduced a method of ``slicing" to obtain sharp $L^p$-estimates for the full maximal function $\mathcal{M}(f,g)$. The slicing argument allows us to dominate the full bilinear spherical maximal function by a product of the classical Hardy-Littlewood maximal function and full (linear) spherical maximal function defined as
	\[\mathcal Af(x)=\sup_{t>0}\left|\int_{\mathbb S^{d-1}}f(x-ty)\;d\sigma(y)\right|.\]
	The operator $A$ is bounded in $L^p(\R^d)$- if, and only if $p>\frac{d}{d-1}$, see Stein \cite{MaximalFunctionsISphericalMeans} for dimension $d\geq3$ and Bourgain \cite{BourgainCircular} for $d=2$. For $d\geq 2$, Jeong-Lee~\cite{JeongLee} used the $L^p$-estimates for linear maximal operators along with the slicing argument to conclude that the operator $\mathcal{M}$ is bounded from $L^{p_1}(\R^d)\times L^{p_2}(\R^d)$ to $L^{p}(\R^d)$ if, and only if $p>\frac{d}{2d-1}$ and $\frac{1}{p_1}+\frac{1}{p_2}=\frac{1}{p}$, except at the points $(1,\infty,1)$ and $(\infty,1,1)$.  The case of dimension $d=1$ was later addressed by Christ-Zhou~ \cite{ChristZhou}, also see~Dosidis-Ramos \cite{DosidisRamos}, Shrivastava-Shuin \cite{ShrivastavaShuin}. They proved that $\mathcal{M}$ is bounded from $L^{p_1}(\R)\times L^{p_2}(\R)\to L^p(\R)$ for $p_1,p_2>2$ and $\frac{1}{p_1}+\frac{1}{p_2}=\frac{1}{p}$. 
	
	Another important maximal function associated with bilinear spherical averages is the lacunary maximal function. It is defined by taking supremum over lacunary sequences as follows  
	\[\mathcal A_{lac}f(x)=\sup_{k\in \Z}\left|\int_{\mathbb S^{d-1}}f(x-2^ky)\;d\sigma(y)\right|.\]
	The $L^p$-estimates for the lacunary maximal operator $\mathcal{A}_{lac}$ holds for a larger range $p>1$.  We refer the reader to  Calder\'{o}n \cite{CalderonLacunarySphericalMeans}, Coifman-Weiss \cite{CoifmanWeissBookReview}, Duoandikoetxea-Rubio de Francia \cite{DR} for $L^p$-boundedness of the lacunary maximal operator $\mathcal{A}_{lac}$ for $p>1$. Lacey~\cite{SparseBoundsForSphericalMaximalFunction} strengthened $L^p$-boundedness properties of spherical maximal operators further by establishing sparse domination of $\mathcal {A}$ and $\mathcal A_{lac}$. 
	However, the weak-type boundedness of  $\mathcal{A}_{lac}$ at the end-point $p=1$ remains a long-standing open problem. We refer the reader to~Christ~\cite{Christ1985}, Seeger-Tao-Wright~\cite{EndpointMappingPropertiesOfSphericalMaximalOperators} and Cladek-Krause~\cite{CladekKrause} for some interesting results for the lacunary operator $\mathcal{A}_{lac}$ near $L^1$. 
	
	The lacunary bilinear spherical maximal function is defined by 
	\[\mathcal{M}_{lac}(f,g)(x):=\sup_{k\in \Z}\left|\int_{\mathbb S^{2d-1}}f(x-2^ky_1)g(x-2^ky_2)\;d\sigma(y_1,y_2)\right|.\]
	The $L^p$-estimates for the lacunary operator $\mathcal{M}_{lac}$ hold for a larger range of $p'$s as compared with the case of full maximal function. Borges-Foster \cite{Borges} proved the $L^p$-estimates for $\mathcal{M}_{lac}$ in dimensions $d\geq2$ and Christ-Zhou \cite{ChristZhou} in dimension $d=1$. The case of dimension $d\geq 2$ exploits the slicing method, which is not applicable in dimension $d=1$. Further, the decay estimates on the Fourier transform $\widehat{d\sigma}$ do not help in extending $L^p$-estimates for the full range of $p'$s in dimension $d=1$. Therefore, the case of $d=1$ is significantly challenging to deal with. Christ-Zhou~\cite{ChristZhou} proved trilinear smoothing estimates for bilinear averaging operators along certain curves. These trilinear smoothing estimates play the key role in obtaining $L^p$-estimates for the operator $\mathcal{M}_{lac}$. We also refer the reader to ~Bhojak-Choudhary-Shrivastava-Shuin~\cites{BCSS1}, Bak-Shim~\cite{BakShim} and Lee-Shuin~\cite{LeeShuin} for some recent developments related to spherical maximal functions. 
	
	The trilinear smoothing inequalities are crucial in investigating $L^p$-boundedness of a large class of maximal bilinear singular integral operators along curves. Recently,  Christ-Durcik-Roos~\cite{CDR} studied the bilinear Hilbert transform along parabola 
	\[H(f_1,f_2)(x,y):=~\text{p.v.}~\int_{\R}f_1(x+t, y)f_2(x, y+t^2)\frac{dt}t,~~ f_1, f_2\in \mathscr S(\R^2).\]
	The operator $H$ is a variant of the bilinear Hilbert transform with curvature
	\[(f_1,f_2)\mapsto \text{p.v.}~\int_{\R}f_1(x+t)f_2(x+t^2)\frac{dt}t,\]
	studied in the works of Li \cite{Li2013}, Lie \cite{Lie2015}, Li-Xiao \cite{LiXiao}, and Lie \cite{Lie2018}.
	In \cite{CDR}, they proved trilinear smoothing inequalities in this context and consequently obtained $L^p$-boundedness of the operator $H$. Further, a very interesting application of the trilinear smoothing inequalities in the context of quantitative bounds on Roth-type theorem with corners determined by a parabola in subsets of the unit square $[0,1]^2$ was also obtained. We also refer to the study guide by Hsu-Lin-Stokolosa~\cite{studyguidetrilinearsmoothing} for a nice exposition of this results. We also refer to Gaitan-Lie \cite{GaitanLie2024} for a different approach to obtain $L^p-$estimates for $H$. Later, Chen-Guo~\cite{CG} extended the notion of the bilinear Hilbert transform along parabola to more general polynomials. In particular, they studied the operator 
	\[H_{P_1,P_2}(f_1,f_2)(x,y):=~\text{p.v.}~\int_{\R}f_1(x+P_1(t), y)f_2(x, y+P_2(t))\frac{dt}t,~~~ f_1, f_2\in \mathscr S(\R^2),\]
	where $P_1$ and $P_2$ are linearly independent polynomials with zero constant term.  They proved a quantitative Roth-type theorem with corners determined by polynomial curves. More precisely, they obtained quantitative lower bounds on the gap parameter $t$ so that configurations of the type $(x,y), (x+P_1(t), y), (x, y+P_2(t))$ lie in positive measure subsets in the unit square $[0,1]^2.$ The trilinear smoothing inequalities are the main tools in proving the Roth-type theorems, see~\cite{CDR, CG}. 
	
	\subsection{Main results} Motivated by these fascinating developments in the theory of bilinear singular integral operators along curves and their applications to Roth-type theorems, we investigate the $L^p$-estimates of the twisted bilinear spherical maximal functions. These operators were defined by Christ-Zhou~\cite{ChristZhou}. For $f_1,f_2\in\mathscr{S}(\R^{2d}), d\geq 1$, and $t>0$, the twisted bilinear spherical average of $f_1$ and $f_2$ is defined by
	\[\mathfrak{A}_t(f_1,f_2)(x,y)=\int_{\mathbb S^{2d-1}}f_1(x+tz_1,y)f_2(x,y+tz_2)\;d\sigma(z_1,z_2).\]
	
	The full bilinear maximal function associated with twisted bilinear averages $\mathfrak{A}_t(f_1,f_2)$ is defined by
	\[\mathfrak{M}(f_1,f_2)(x,y):=\sup_{t>0}\left|\mathfrak{A}_t(f_1,f_2)(x,y)\right|.\]
	Further, the lacunary bilinear maximal function associated with twisted bilinear averages $\mathfrak{A}_t(f_1,f_2)$ is given by
	\[\mathfrak{M}_{lac}(f_1,f_2)(x,y):=\sup_{j\in\Z}\left|\mathfrak{A}_{2^j}(f_1,f_2)(x,y)\right|.\]
	The following theorems are our main results on $L^p$-boundedness of the operators $\mathfrak{M}$ and $\mathfrak{M}_{lac}.$
	\begin{theorem}\label{full}
		Let $1\leq p_1,p_2\leq \infty$ and $\frac{1}{p_1}+\frac{1}{p_2}=\frac{1}{p}$. Then we have
		\[\|\mathfrak{M}(f_1,f_2)\|_{L^p(\R^{2d})}\lesssim \|f_1\|_{L^{p_1}(\R^{2d})}\|f_2\|_{L^{p_2}(\R^{2d})},\]
		for all $f_1\in L^{p_1}(\R^{2d}),\;f_2\in L^{p_2}(\R^{2d}),$ and
		\begin{enumerate}
			\item $d\geq2$ and $\frac{d}{2d-1}<p\leq\infty$, or
			\item $d=1$ and $2<p_1,p_2\leq\infty$.
		\end{enumerate}
		Moreover, we have the following restricted weak-type inequality
		\[\|\mathfrak{M}(f_1,f_2)\|_{L^{p,\infty}(\R^{2d})}\lesssim \|f_1\|_{L^{p_1,1}(\R^{2d})}\|f_2\|_{L^{p_2,1}(\R^{2d})},\]
		for all $f_1\in L^{p_1}(\R^{2d}),\;f_2\in L^{p_2}(\R^{2d}),$ and
		\begin{enumerate}
			\item $d\geq3$ and $p=\frac{d}{2d-1}$, or
			\item $d=2$, $p=\frac{2}{3}$ and $1< p_1,p_2<2$, or
			\item $d=1$, $1\leq p\leq\infty$ and $p_1=2$ or $p_2=2$.
		\end{enumerate}
	\end{theorem}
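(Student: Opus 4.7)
For the dimensions $d\ge 2$ covered by part (1), my starting point is the coarea/slicing identity for $\mathbb S^{2d-1}$. With the parametrisation $(z_1,z_2)=(s\omega_1,\sqrt{1-s^2}\omega_2)$, $s\in[0,1]$, $\omega_i\in\mathbb S^{d-1}$, the surface measure factors as $c_d\,s^{d-1}(1-s^2)^{(d-2)/2}\,ds\,d\sigma(\omega_1)\,d\sigma(\omega_2)$. Since $f_1(x+tz_1,y)$ depends only on $z_1$ and $f_2(x,y+tz_2)$ only on $z_2$, the $\omega_i$-integrations decouple and give
\[\mathfrak A_t(f_1,f_2)(x,y)=c_d\int_0^1 s^{d-1}(1-s^2)^{(d-2)/2}\,A^x_{ts}f_1(x,y)\,A^y_{t\sqrt{1-s^2}}f_2(x,y)\,ds,\]
where $A^x_r$, $A^y_r$ denote the $d$-dimensional linear spherical averages of radius $r$ acting in the first and second blocks of $2d$ variables respectively (with the other block frozen). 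Dominating each factor by the corresponding full linear spherical maximal $\mathcal A_x$, $\mathcal A_y$ and using integrability of the weight on $[0,1]$ yields the pointwise majorisation $\mathfrak M(f_1,f_2)(x,y)\lesssim \mathcal A_xf_1(x,y)\cdot\mathcal A_yf_2(x,y)$.

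By Fubini combined with the Stein/Bourgain $L^{p_i}$-estimate for the $d$-dimensional linear spherical maximal applied slicewise, $\|\mathcal A_xf_1\|_{L^{p_1}(\mathbb R^{2d})}\lesssim\|f_1\|_{L^{p_1}(\mathbb R^{2d})}$ for $p_1>d/(d-1)$ (and trivially for $p_1=\infty$), and analogously in the $y$-variable. H\"older's inequality then produces the strong-type estimate in the subrange $p_1,p_2>d/(d-1)$. To reach the sharper range $p>d/(2d-1)$ and the restricted weak-type endpoints in (1), I would bilinearly interpolate this bound with (a) the trivial $L^\infty\times L^\infty\to L^\infty$ estimate; (b) the endpoints $L^1\times L^\infty\to L^{1,\infty}$ and $L^\infty\times L^1\to L^{1,\infty}$, which follow from the slicing by integrating out $z_2$ to reduce to a weighted ball average in $x$ bounded by the Hardy--Littlewood maximal; and (c) the restricted weak-type $L^{d/(d-1),1}\to L^{d/(d-1),\infty}$ bound for the linear spherical maximal, available for $d\ge 3$. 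The absence of a corresponding endpoint for the planar circular maximal at $p=2$ accounts for the asymmetric hypothesis $1<p_1,p_2<2$ in the $d=2$ case.

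For $d=1$ the slicing above degenerates, so one proceeds differently. The key observation is $\mathfrak A_t(f_1,f_2)(x,y)=\widetilde A_tG_{x,y}(0,0)$, where $G_{x,y}(u,v):=f_1(x+u,y)f_2(x,y+v)$ is a tensor product in $(u,v)$ and $\widetilde A_t$ denotes the planar circular average. Cauchy--Schwarz in the angular variable gives
\[\mathfrak M(f_1,f_2)(x,y)\lesssim \bigl(\mathcal N|f_1(\cdot,y)|^2(x)\bigr)^{1/2}\bigl(\mathcal N|f_2(x,\cdot)|^2(y)\bigr)^{1/2},\]
with $\mathcal Ng(x):=\sup_{t>0}\int_0^{2\pi}|g(x+t\cos\theta)|\,d\theta$. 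A de-Leeuw-type transference from Bourgain's planar circular maximal (viewing $\mathcal Ng$ as the planar maximal evaluated on a function constant in one coordinate) shows $\mathcal N\colon L^q(\mathbb R)\to L^q(\mathbb R)$ for $q>2$; Fubini plus H\"older then produces the bilinear estimate for $p_1,p_2>4$. Sharpening to the claimed range $p_1,p_2>2$ in (2) requires a bilinear refinement that applies Bourgain's planar estimate directly to the tensor $G_{x,y}$ (exploiting $\|G_{x,y}\|_{L^p(\mathbb R^2)}=\|f_1(\cdot,y)\|_p\|f_2(x,\cdot)\|_p$) to extract the pointwise value at the origin; the endpoint $p_1=2$ or $p_2=2$ in (3) uses the $L^{2,1}\to L^{2,\infty}$ restricted weak-type version of Bourgain's theorem.

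The two places where I expect the most work are: in dimensions $d\ge 2$, passing from the natural slicing range $p_i>d/(d-1)$ to the sharp range $p>d/(2d-1)$, which requires bilinear restricted Marcinkiewicz interpolation using endpoint data for the linear spherical maximal; and in dimension $d=1$, sharpening the Cauchy--Schwarz bound from $p_1,p_2>4$ to the claimed $p_1,p_2>2$, which demands a careful transference of Bourgain's planar circular maximal to the pointwise diagonal evaluation of the tensor function $G_{x,y}$.
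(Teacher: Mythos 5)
Your broad strategy (slicing for $d\geq 2$, direct treatment of the circle for $d=1$) matches the paper's, but in both regimes the route you propose for passing from the easy range to the sharp one leaves a genuine gap, and that gap is precisely the technical content of the proof.

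For $d\geq 2$: the pointwise bound $\mathfrak{M}(f_1,f_2)\lesssim \mathcal A_x f_1 \cdot \mathcal A_y f_2$ only reaches $p_1, p_2 > d/(d-1)$, hence $p > d/(2d-2)$, and the interpolation ingredients you list cannot close the gap to $p > d/(2d-1)$. Concretely, the convex hull of $\{(1/p_1,1/p_2): p_1,p_2>d/(d-1)\}$ together with $(1,0)$, $(0,1)$ and $(0,0)$ is a quadrilateral with vertex $((d-1)/d,(d-1)/d)$, whose boundary sits strictly inside the target line $1/p_1+1/p_2=(2d-1)/d$; the corner endpoints of the target, such as $(1,(d-1)/d)$, cannot be reached from a product of two spherical maximal functions because $\mathcal A_y$ has no weak $(1,1)$ bound. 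The paper works instead with the whole one-parameter family of pointwise dominations $\mathfrak{M}(f_1,f_2)\lesssim A^{r,(1)}_*f_1\cdot A^{r',(2)}_*f_2$ for every H\"older-conjugate pair $(r,r')$, where $A^{r,(l)}_*$ is the fiberwise $L^r(t^{d-1}dt)$-maximal over dyadic scales, bounded on $L^p$ for $p>dr/(dr-r+1)$. Sweeping $r\in[1,\infty]$ traces out the full boundary $1/p_1+1/p_2=(2d-1)/d$, with the $r=1$ versus $r'=\infty$ choice recovering the near-$L^1$ corner via the fiberwise Hardy--Littlewood maximal; it is this family, not a single bilinear interpolation from a fixed domination, that is the engine.

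For $d=1$: the observation $\mathfrak A_t(f_1,f_2)(x,y)=\widetilde A_t G_{x,y}(0,0)$ with $G_{x,y}$ a tensor product is appealing, but Bourgain's theorem gives $L^p(\R^2)\to L^p(\R^2)$ bounds and not pointwise bounds at a fixed point of a maximal operator applied to a profile $G_{x,y}$ that changes with the base point $(x,y)$; as written this does not sharpen your Cauchy--Schwarz range $p_1,p_2>4$ to $p_1,p_2>2$. The paper proceeds differently: it parametrizes $\Sp^1$, decomposes the angular integral dyadically near the degeneracy $z_1=0$, and applies H\"older with exponents $(3,3/2)$ on each dyadic arc, obtaining the two-sided bound
\[
\mathfrak{M}_k(f_1,f_2)\lesssim\min\bigl\{2^{k/3}M_3^{(1)}f_1\, M_{3/2}^{(2)}f_2,\ 2^{-k/3}M_{3/2}^{(1)}f_1\, M_3^{(2)}f_2\bigr\},
\]
whose geometric decay in $k$ lets the series converge and delivers the restricted weak type $(2,2,1)$ endpoint, from which the claimed range follows by interpolation. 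Without an analogue of this decomposition, or the $A^{r,(l)}_*$ family in higher dimensions, the sharp ranges are out of reach.
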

	\begin{theorem}\label{lacunary}
		Let $d\geq1$ and $1<p_1,p_2<\infty$ and $\frac{1}{p_1}+\frac{1}{p_2}=\frac{1}{p}$. Then for $\frac{2d+1}{4d}<p\leq\infty$, we have
		\[\|\mathfrak{M}_{lac}(f_1,f_2)\|_{L^p(\R^{2d})}\lesssim \|f_1\|_{L^{p_1}(\R^{2d})}\|f_2\|_{L^{p_2}(\R^{2d})},\]
		for all $f_1\in L^{p_1}(\R^{2d})$ and $f_2\in L^{p_2}(\R^{2d})$.
	\end{theorem}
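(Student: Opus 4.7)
The proof of Theorem \ref{lacunary} splits naturally by dimension, echoing the dichotomy already announced in the abstract: in dimensions $d\geq 2$ the slicing method reduces the problem to linear maximal theory, while in dimension $d=1$ the slicing degenerates and one must instead establish a trilinear smoothing inequality through a sublevel set estimate.

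For $d\geq 2$, the starting point is the Fubini-type disintegration of surface measure on $\mathbb S^{2d-1}\subset\R^d\times\R^d$:
\[\mathfrak{A}_t(f_1,f_2)(x,y) = c_d\int_{|z|\leq 1}(1-|z|^2)^{\frac{d-2}{2}}\, f_1(x+tz, y)\, \mathcal{A}^{(y)}_{t\sqrt{1-|z|^2}}f_2(x,y)\, dz,\]
where $\mathcal{A}^{(y)}_s f_2(x,y) := \int_{\mathbb S^{d-1}} f_2(x, y - s\omega)\,d\sigma(\omega)$ is the linear spherical average at radius $s$ acting in the $y$-variable. A crude separation of suprema together with the pointwise bound by $\sup_s|\mathcal{A}^{(y)}_s f_2|$ recovers only the range of Theorem \ref{full}, namely $p>d/(2d-1)$, because the radii $t\sqrt{1-|z|^2}$ sweep over all scales as $z$ varies in $B^d$. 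To reach the sharper cutoff $p > (2d+1)/(4d)$, I would Littlewood--Paley decompose the spherical multiplier $\widehat{d\sigma}_{2d-1}(t\cdot)$ into pieces frequency-localised at $|(\xi_1,\eta_2)|\sim 2^k$, and for each piece establish an $\ell^2(j)$-square function bound on the lacunary maximal by combining the decay $|\widehat{d\sigma}_{2d-1}|\lesssim 2^{-k(2d-1)/2}$, Plancherel, and the $L^p$-boundedness of the Hardy--Littlewood maximal function in the $x$-variable. Summing the geometric $2^{-\alpha k}$ gain over $k$ and bilinearly interpolating with the trivial $L^\infty$ bound should close the estimate in the claimed range.

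For $d=1$, the ``spherical average'' in the slicing formula is merely a two-point evaluation,
\[\mathcal{A}^{(y)}_s f_2(x,y)=\tfrac12\bigl(f_2(x,y-s)+f_2(x,y+s)\bigr),\]
which has no $L^p$-smoothing, and the $\sup|AB|\leq\sup|A|\sup|B|$ strategy collapses. Smoothing must instead be extracted from the oscillatory $z$-integration against $(1-z^2)^{-1/2}\,dz$, equivalently from integrating over the angular parameter $\theta\mapsto(\cos\theta,\sin\theta)$ on $\mathbb S^1$. In the spirit of Christ--Zhou~\cite{ChristZhou}, Christ--Durcik--Roos~\cite{CDR}, and Chen--Guo~\cite{CG}, I would prove a trilinear smoothing inequality of the form
\[\left|\int_{\R^2}\mathfrak{A}_t(f_1,f_2)(x,y)\,g(x,y)\,dx\,dy - \Lambda_t^{\mathrm{low}}(f_1,f_2,g)\right|\lesssim t^{-\delta}\,\|f_1\|_{p_1}\|f_2\|_{p_2}\|g\|_{p'},\]
for some $\delta>0$ and appropriate $(p_1,p_2,p)$, where $\Lambda_t^{\mathrm{low}}$ absorbs the low-frequency part of $d\sigma$ and is dominated by products of standard maximal functions. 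The proof would reduce this smoothing estimate to a sublevel set bound for a three-parameter phase function tied to the twisted circle. A standard $\ell^2$-square function argument in the lacunary parameter then converts the smoothing inequality into the desired $L^{p_1}\times L^{p_2}\to L^p$ bound for $\mathfrak{M}_{lac}$.

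The main obstacle I expect is the $d=1$ case: identifying a sublevel set estimate strong enough to yield the sharp exponent $p>3/4$, and then propagating it through the trilinear smoothing inequality to the lacunary maximal bound, is the substantive analytic step. The case $d\geq 2$ is, by contrast, essentially a careful combination of the slicing identity with known linear maximal and Fourier-decay estimates, and I would expect it to be technically routine modulo the Littlewood--Paley bookkeeping.
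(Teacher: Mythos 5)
Your outline correctly identifies the dichotomy the paper uses — slicing-based estimates in dimensions $d\geq 2$, and a trilinear smoothing inequality proved via a sublevel set bound in dimension $d=1$ — and that matches the source of the key decay mechanism in both cases. However, there is a genuine gap in your interpolation scheme, and it affects all dimensions.

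You propose to obtain a geometrically decaying $(2,2,1)$-type gain on the Littlewood--Paley pieces and then close the range by ``bilinearly interpolating with the trivial $L^\infty$ bound.'' This cannot reach the claimed range. Interpolating an $L^2\times L^2\to L^1$ estimate with any $L^{p_1}\times L^{p_2}\to L^p$ estimate in the Banach range ($p\geq 1$, and in particular $L^\infty\times L^\infty\to L^\infty$) can only produce exponents with $p\geq 1$. The target range is $p>\frac{2d+1}{4d}$, which dips strictly below $1$ for every $d\geq 1$ (it is $3/4$ when $d=1$ and approaches $1/2$ as $d\to\infty$). To reach any $p<1$ you need a second anchor point in the interpolation that itself lives below $L^1$; exponential decay interpolated against an $L^\infty$ endpoint gives you no access to the quasi-Banach range. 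Moreover, even if you had an estimate at $p<1$, you must ensure the two bounds you interpolate, one decaying and one at worst slowly growing in the Littlewood--Paley index, still sum.

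The paper supplies exactly this missing anchor: a weak-type $\left(\frac{2d+1}{2d},\frac{2d+1}{2d},\frac{2d+1}{4d}\right)$ bound for each dyadic piece $\sup_j|T_{j,k}|$ with only \emph{quadratic} growth $|k|^2$ (Proposition \ref{estimateGrowth}), proved by a \emph{fiberwise} bilinear Calder\'on--Zygmund decomposition (Lemma \ref{fiberwiseCZ}): one decomposes $f_1$ at scale $\alpha^{1/2}$ in the $x$-variable for each fixed $y$, and $f_2$ in the $y$-variable for each fixed $x$, exploits a single-scale $L^{\frac{2d+1}{2d}}\times L^{\frac{2d+1}{2d}}\to L^{\frac{2d+1}{4d}}$ bound for $\mathfrak{A}_1$ (Proposition \ref{singleAvg}), and uses moment conditions of the bad parts to win decay off an exceptional set. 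Interpolating this polynomially growing quasi-Banach estimate against the exponentially decaying $(2,2,1)$-estimate (Proposition \ref{estimateDecay}) and summing over $k$ gives the full open range. Your proposal never constructs this endpoint estimate; without it — or some substitute quasi-Banach bound that is summable in $k$ — the argument stops at $p\geq 1$. I would also flag that the low-frequency pieces of the decomposition (where at least one function is $\Theta_j^{(l)}$-localized) need their own maximal-function control (Lemma \ref{HLmax}) before you ever touch the decay/growth interplay; your sketch absorbs these into a general ``$\Lambda_t^{\mathrm{low}}$'' for $d=1$ but does not address them for $d\geq 2$.
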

	The proof of \Cref{full} is based on the slicing arguments from~\cite{JeongLee, BCSS1}. The proof of \Cref{lacunary} is significantly more technical and requires many ingredients. It is carried out in multiple steps. We build the proof exploiting the ideas developed in ~\cite{Bernicot,Borges,CG, CDR, ChristZhou, IPS}. In the case of dimension $d=1$, the following trilinear smoothing inequality for the twisted bilinear operators plays the key role in the proof of \Cref{lacunary}. 
	\begin{theorem}\label{Trilinearsmoothing}
		Consider the operator defined by
		\[
		T\left(f_1, f_2\right)(x, y)=\int_{\Sp^1} f_1\left(x+z_1, y\right) f_2\left(x, y+z_2\right) d \sigma\left(z_1, z_2\right).
		\]
		Then there exists $\mathfrak{C}>0$ such that the following estimate holds,
		\[
		\left\|T\left(f_1, f_2\right)\right\|_{L^{1}\left(\mathbb{R}^2\right)} \lesssim \lambda^{-\mathfrak{C}}\left\|f_1\right\|_{L^2\left(R^2\right)}\left\|f_2\right\|_{L^2\left(R^2\right)},
		\]
		\sloppy for all test functions $f_1,f_2$ on $\R^2$ such that either
		\begin{enumerate}[i)]
			\item $\operatorname{supp} \hat{f}_1 \subset\left\{\left(\xi_1, \xi_2\right): \lambda \leq\left|\xi_1\right| \leq 2 \lambda\right\}$ and $\operatorname{supp} \hat{f}_2 \subset\left\{\left(\xi_1, \xi_2\right):\left|\xi_2\right| \leq 2 \lambda\right\}$, or
			\item $\operatorname{supp} \hat{f}_1 \subset\left\{\left(\xi_1, \xi_2\right): \left|\xi_1\right| \leq 2 \lambda\right\}$ and $\operatorname{supp} \hat{f}_2 \subset\left\{\left(\xi_1, \xi_2\right):\lambda\leq\left|\xi_2\right| \leq 2 \lambda\right\}$.
		\end{enumerate}
	\end{theorem}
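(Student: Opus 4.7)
The plan is to establish the trilinear form estimate by duality: writing $\|T(f_1,f_2)\|_{L^1} = \sup_{\|f_3\|_{L^\infty}\leq 1}|\Lambda(f_1,f_2,f_3)|$ with
\[
\Lambda(f_1,f_2,f_3) = \int_{\R^2} T(f_1,f_2)(x,y)\, f_3(x,y)\, dx\, dy,
\]
it suffices to show $|\Lambda(f_1,f_2,f_3)| \lesssim \lambda^{-\mathfrak{C}}\|f_1\|_2\|f_2\|_2\|f_3\|_\infty$. By the evident symmetry between the two frequency-localization hypotheses, it is enough to treat case (i). Introducing the partial Fourier transforms $F_1(\xi_1,y) = \int f_1(x,y) e^{-2\pi i x\xi_1}\, dx$ and $F_2(x,\eta_2) = \int f_2(x,y) e^{-2\pi i y\eta_2}\, dy$, the operator admits the bilinear Fourier multiplier representation
\[
T(f_1,f_2)(x,y) = \iint \widehat{d\sigma}(\xi_1,\eta_2)\, F_1(\xi_1,y)\, F_2(x,\eta_2)\, e^{2\pi i (x\xi_1+y\eta_2)}\, d\xi_1\, d\eta_2,
\]
where $\widehat{d\sigma}$ is the Fourier transform of surface measure on $\Sp^1$. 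By the Bessel-function asymptotics, $|\widehat{d\sigma}(\xi_1,\eta_2)| \lesssim (1+|(\xi_1,\eta_2)|)^{-1/2}$ with oscillation of the form $e^{\pm 2\pi i|(\xi_1,\eta_2)|}$; on the support dictated by the hypothesis ($|\xi_1|\sim\lambda$) this forces the amplitude to be $\lesssim \lambda^{-1/2}$.

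The challenge is to convert this pointwise amplitude decay into an off-diagonal $L^2 \times L^2 \to L^1$ gain, since bounding naively by $\|\widehat{d\sigma}\|_{L^\infty(\operatorname{supp})}$ after a triangle inequality yields no gain at all (the $L^1$ norm of the kernel stays of order one). Following the approach of~\cite{CDR, ChristZhou, CG}, I would apply a $TT^*$-type Cauchy--Schwarz iteration: Cauchy--Schwarz in $f_1$ transforms $|\Lambda|^2$ into a multilinear expression involving the auto-correlation of $f_1$ together with two copies of $f_2$ and $f_3$. After performing the spatial integrations, one is left with an oscillatory kernel on $\Sp^1\times \Sp^1$ whose phase depends on differences $z-z'$ of sphere parameters. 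Iterating once more in $f_2$ produces a quadrilinear form in which the combined oscillating factors inherited from $\widehat{d\sigma}$ assemble into a non-degenerate phase on $(\Sp^1)^4$.

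The crux, and what I expect to be the main obstacle, is the sublevel-set estimate for this iterated phase: one must show that the natural polynomial $\Phi$ on $(\Sp^1)^k$ arising from the iteration satisfies
\[
\bigl|\{(z^{(1)},\ldots,z^{(k)})\in(\Sp^1)^k: |\Phi(z^{(1)},\ldots,z^{(k)})|\leq\mu\}\bigr| \lesssim \mu^{\beta}
\]
for some $\beta>0$. This is an algebraic-geometric statement about the intersections of level sets of $\Phi$ with $(\Sp^1)^k$ and should follow from Bezout-type bounds combined with the non-vanishing curvature of $\Sp^1$, ensuring that $\Phi$ is not identically degenerate along the relevant diagonal. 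Combining this sublevel-set estimate with a dyadic Littlewood--Paley decomposition of $f_2$ in the $\eta_2$-variable (to handle the full band $|\eta_2|\leq 2\lambda$) and the symbol amplitude bound of step~one, one should obtain the desired $\lambda^{-\mathfrak{C}}$ gain for some positive $\mathfrak{C}$ depending on $\beta$ and on the number of Cauchy--Schwarz iterations used.
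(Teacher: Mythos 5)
Your proposal correctly recognizes that the naive triangle inequality gives nothing and that some iterated Cauchy--Schwarz argument is needed, but the $TT^*$-iteration you sketch does not actually reach a ``non-degenerate phase on $(\Sp^1)^4$,'' and this is where the argument breaks down. After Cauchy--Schwarz in $f_1$ (i.e.\ changing variables $u = x + \cos t$ and integrating out $(u,y)$), you are left with
\[
\int_{\R^2}\int_0^{2\pi}\!\!\int_0^{2\pi} f_2(u-\cos t, y+\sin t)\overline{f_2(u-\cos s, y+\sin s)}\, f_3(u-\cos t, y)\overline{f_3(u-\cos s, y)}\,dt\,ds\,du\,dy.
\]
Because of the twist, $f_3$ appears at the two \emph{distinct} spatial points $(u-\cos t,y)$ and $(u-\cos s,y)$, and $f_2$ at $(u-\cos t,y+\sin t)$ and $(u-\cos s,y+\sin s)$; the spatial variables do not decouple from the circle parameters, so no autonomous oscillatory integral on $(\Sp^1)^2$ (let alone $(\Sp^1)^4$) emerges. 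The difficulty is compounded by the fact that $f_3$ is only $L^\infty$ (from duality), so it carries no Fourier information one could cancel against. What would actually be needed is a mechanism to extract a phase from the \emph{data} $f_1, f_2$ themselves, not from $\widehat{d\sigma}$.

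The paper handles precisely this difficulty via the Christ--Durcik--Roos frequency-pruning lemma (\Cref{multiplicativelemma}): after reducing to an $(\infty,\infty,1)$ estimate via the $L^{3/2}\to L^3$ improving bound of circular averages, it decomposes each localized piece of $f_l$ into a ``sharp'' part $f_{l,\sharp}=\sum_n h_{l,n}\,e^{i\alpha_n(\cdot)\,\cdot}$ with controlled phases and a ``flat'' part with small $L^2$-norm of its multiplicative derivative. Terms with a flat factor are handled by a direct oscillatory computation (\Cref{lemma:oscillatory}), and the sharp--sharp term reduces (via \Cref{lemma:reductiontosublevel}) to the sublevel-set estimate \Cref{Sublevelsetestimate}, whose object is
\[
\bigl|\bigl\{(x,y,t)\in K : |\alpha(x+\cos t,y)\sin t + \beta(x,y+\sin t)\cos t|\lesssim\delta\epsilon\bigr\}\bigr|\lesssim\delta\epsilon^{1/35},
\]
for \emph{arbitrary measurable} functions $\alpha,\beta$ with $|\alpha|\sim 1$, $|\beta|\lesssim 1$. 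This is not a polynomial sublevel set on $(\Sp^1)^k$, so the Bezout-type algebraic argument you anticipated is not applicable; the paper instead proves it by Christ's method of refinements combined with a quantitative inverse function theorem. In short, your proposal is missing the frequency-pruning step, and the sublevel-set problem you posit is of the wrong type: it must be stable under arbitrary measurable phases $\alpha,\beta$, which is a substantially harder and structurally different requirement than a curvature/B\'ezout count.
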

We refer the reader to \Cref{figure1} and \Cref{figure2} for the plan of proofs of \Cref{lacunary} and \Cref{Trilinearsmoothing} respectively. \Cref{full} is proved in \Cref{sec:prooffull} and \Cref{lacunary} is proved in \Cref{sec:prooflacunary}. \Cref{fiberwise:sec} is devoted to fiberwise bilinear Calder\'on-Zygmund theory. The proof of the trilinear smoothing inequality \Cref{Trilinearsmoothing} is carried out in \Cref{sec:Trilinearsmoothing} and \Cref{sublevel:sec} is devoted to a sublevel set estimate. In \Cref{sec:examples}, we discuss examples regarding necessary conditions for the boundedness of twisted bilinear spherical average $\mathfrak{A}_t$.
	\section{Full maximal function: Proof of \Cref{full}}\label{sec:prooffull} The proof of \Cref{full} in dimension $d\geq 2$ is based on the modified slicing argument for the spherical averages, see~\cite{BCSS1, JeongLee} for details. In the case of dimension $d=1$, the slicing argument is not valid. In this case, we estimate the circular averages in a more direct manner. We write down the integral over circle in parametric form and then decompose the domain of integral dyadically. We prove suitable estimates on each piece of the decomposition to complete the proof. We require the following notation in order to present a proof of \Cref{full}. 
	
	Consider the following linear averaging operators and associated maximal functions. These show up naturally in the analysis of twisted bilinear spherical maximal functions. 
	\begin{align*}
		A_t^{(1)}f(x,y)&=\int_{\Sp^{d-1}}f(x-tz,y)\;d\sigma(z),\\
		A_t^{(2)}f(x,y)&=\int_{\Sp^{d-1}}f(x,y-tz)\;d\sigma(z),\\
		S^{(1)}f(x,y)&=\sup_{t>0}\;\left|\int_{\Sp^{d-1}}f(x-tz,y)\;d\sigma(z)\right|,\\
		S^{(2)}f(x,y)&=\sup_{t>0}\;\left|\int_{\Sp^{d-1}}f(x,y-tz)\;d\sigma(z)\right|.
	\end{align*}
	For $1\leq r\leq\infty$ and $l=1,2$, we define
	\begin{align*}
		M^{(1)}_rf(x,y)&=\sup_{t>0}\;\left(\int_{B(0,1)}|f(x-tz,y)|^r\;dz\right)^\frac{1}{r},\\
		M^{(2)}_rf(x,y)&=\sup_{t>0}\;\left(\int_{B(0,1)}|f(x,y-tz)|^r\;dz\right)^\frac{1}{r}.
	\end{align*}
	Further, in the case of dimensions $d\geq 2$, we require intermediary operators $A^{r,(l)}_*,\;1\leq r\leq\infty,\;l=1,2,$ defined by
	\[A^{r,(l)}_*f(x,y)=\sup_{k\in\Z}\left\|A_{2^kt}^{(l)}f(x,y)\right\|_{L^r([1,2],t^{d-1}dt)}.\]
	We have the following lemma concerning the $L^p-$estimates for the operators $A^{r,(l)}_*,\;1\leq r\leq\infty,\;l=1,2.$ The proof of this lemma is a direct consequence of Theorem 1.6 in \cite{BCSS1} applied in a fiberwise manner.
	\begin{lemma}\label{Arlinear}
		Let $1\leq r\leq\infty$, $d\geq2$, and $l=1,2$. Then, the operator $A^{r,(l)}_*$ is bounded from  $L^p(\R^{2d})$ to itself for $p>\frac{dr}{dr-r+1}$. Moreover $A^{r,(l)}_*$ is of restricted weak type $\left(\frac{dr}{dr-r+1},\frac{dr}{dr-r+1}\right)$ for $1\leq r<\infty$ and $A^{\infty,(l)}_*$ is of restricted weak type $\left(\frac{d}{d-1},\frac{d}{d-1}\right)$ for $d\geq3$.
	\end{lemma}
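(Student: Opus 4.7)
The plan is to reduce the claim to the one-parameter linear result in Theorem 1.6 of \cite{BCSS1} by viewing $A^{r,(l)}_*$ fiberwise in the non-averaged coordinate. By symmetry it suffices to treat $l=1$. For each $y\in\R^d$ set $f_y(x):=f(x,y)$. The identity
\[A^{r,(1)}_*f(x,y)=\sup_{k\in\Z}\left(\int_1^2\bigl|A_{2^kt}f_y(x)\bigr|^r\,t^{d-1}\,dt\right)^{1/r}\]
exhibits $A^{r,(1)}_*f(\cdot,y)$ as the single-variable sublinear spherical operator of Theorem 1.6 of \cite{BCSS1} applied to $f_y\in\mathscr S(\R^d)$. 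That theorem delivers the $L^p(\R^d)$-boundedness for $p>\frac{dr}{dr-r+1}$ and the restricted weak-type endpoints at $p_0=\frac{dr}{dr-r+1}$ (when $r<\infty$) and $p_0=\frac{d}{d-1}$ (when $r=\infty$ and $d\geq 3$), which are exactly the claimed ranges.

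The strong-type inequality then follows at once by Fubini:
\[\|A^{r,(1)}_*f\|_{L^p(\R^{2d})}^p=\int_{\R^d}\|A^{r,(1)}_*f(\cdot,y)\|_{L^p(\R^d)}^p\,dy\lesssim\int_{\R^d}\|f_y\|_{L^p(\R^d)}^p\,dy=\|f\|_{L^p(\R^{2d})}^p.\]
For the restricted weak-type endpoint I test on $f=\chi_E$ for measurable $E\subset\R^{2d}$, set $E_y:=\{x\in\R^d:(x,y)\in E\}$, and slice in $y$:
\[\bigl|\{(x,y):A^{r,(1)}_*\chi_E(x,y)>\alpha\}\bigr|=\int_{\R^d}\bigl|\{x:A^{r,(1)}_*\chi_E(x,y)>\alpha\}\bigr|\,dy\lesssim\alpha^{-p_0}\int_{\R^d}|E_y|\,dy=\alpha^{-p_0}|E|,\]
which is the desired restricted weak-type $(p_0,p_0)$ inequality on $\R^{2d}$.

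The argument is essentially routine once the fiberwise structure is identified; the only thing to verify is that both the $L^p$-norm and the restricted weak-type distributional inequality interact correctly with Fubini in the $y$-variable, which they do because $A^{r,(1)}_*$ acts trivially in $y$. Consequently there is no substantive obstacle beyond invoking Theorem 1.6 of \cite{BCSS1}, which is precisely why the authors single out this lemma as a direct fiberwise consequence rather than a separate result requiring a fresh argument.
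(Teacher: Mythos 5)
Your proof is correct and follows exactly the approach the paper intends when it says the lemma is ``a direct consequence of Theorem 1.6 in \cite{BCSS1} applied in a fiberwise manner.'' You simply spell out the Fubini step (and the corresponding slicing for the restricted weak-type endpoint, using that $A^{r,(1)}_*\chi_E(\cdot,y)$ depends only on the slice $E_y$), which the paper leaves implicit.
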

	
	Let us first consider the case of dimension $d\geq 2.$ We apply the modified slicing arguments from \cite{BCSS1} to obtain
	\[\mathfrak{M}(f_1,f_2)(x,y)\lesssim A^{r,(1)}_*f_1(x,y)A^{r',(2)}_*f_2(x,y),\;1\leq r\leq \infty.\]
	The case of $d\geq2$ follows by the  inequality above, \Cref{Arlinear} and H\"older's inequality.

	We now discuss the proof \Cref{full} for $d=1$.	We parameterize the circle $\Sp^1$ and use the symmetry to obtain
	\begin{align*}
		\mathfrak{M}(f_1,f_2)(x,y)\lesssim\sup_{t>0}\left|\int_{z=0}^{\frac{1}{2}}f_1(x-tz,y)f_2(x,y-t\sqrt{1-z^2})\;\frac{dz}{\sqrt{1-z^2}}\right|+\text{ similar terms}
	\end{align*}
	By decomposing the interval $[0,\frac{1}{\sqrt{2}}]$ into dyadic sub-intervals, we have
	\[\mathfrak{M}(f_1,f_2)(x)\lesssim\sum_{k=1}^{\infty}\mathfrak{M}_k(f_1,f_2)(x),\]
	where the operator $\mathfrak{M}_k$ is defined by
	\begin{align*}
		\mathfrak{M}_k(f_1,f_2)(x):&=\sup_{t>0}\int_{2^{-k-1}}^{2^{-k}}|f_1(x-tz,y)f_2(x,y-t\sqrt{1-z^2})|\;dz.
	\end{align*}
	By an application of H\"older's inequality, we have
	\begin{equation}\label{T_k}
		\mathfrak{M}_k(f_1,f_2)(x)\lesssim\min\;\left\{2^{\frac{k}{3}}M_3^{(1)}f_1(x,y)M_{\frac{3}{2}}^{(2)}f_2(x,y),2^{-\frac{k}{3}}M_\frac{3}{2}^{(1)}f_1(x,y)M_3^{(2)}f_2(x,y)\right\}.
	\end{equation}
	Now, the restricted weak type $(2,2,1)$ inequality follows by the boundedness of the operators $M_r^{(l)}$ and summing in $k$. For details, we refer to \cite{BCSS1}. The other restricted weak type inequalities follows similarly. Finally, we conclude the desired strong type estimates by interpolation.
	\qed
	\section{Lacunary maximal function : Proof of \Cref{lacunary}} \label{sec:prooflacunary} The proof of \Cref{lacunary} constitutes the main body of 
	the paper. In this section, we provide the main steps required in the proof. Proofs of some of the major 
	intermediary steps will be presented in separate sections to make the exposition more streamlined for the interested reader. 
	The proof is motivated by the methods developed in~\cite{ChristZhou,CG, CDR, BCSS1} with suitable modifications 
	required to address the case of twisted maximal functions. We refer the reader to \Cref{figure1} for a brief outline of the proof of \Cref{lacunary}. 
	
	First, we decompose the operator using smooth Fourier projections of each of the two functions in a fiberwise manner. For, let $\phi\in C_c^\infty(\R^d)$ be a radial function supported on the ball $B(0,2)$ such that $\phi(x)=1,\;x\in B(0,1)$ and $0\leq\phi\leq1$. Let $\psi(x)=\phi(x)-\phi(2x)$. For $k\in\Z$, we define the sequence of functions $\phi_k$ and $\psi_k$ as $\phi_k(x)=\phi(2^{-k}x)$ and $\psi_k(x)=\psi(2^{-k}x)$. For a function $f\in\mathcal{S}(\R^{2d})$, we denote $\Theta_k^{(l)},\Delta_k^{(l)},\;l=1,2$ as the smooth Fourier projection operators given by
	\begin{align*}
		(\Theta_k^{(1)}f)^{\wedge}(\xi,y)=\phi_k(\xi)\widehat{f}(\xi,y),\;&\;(\Theta_k^{(2)}f)^{\wedge}(x,\eta)=\phi_k(\eta)\widehat{f}(x,\eta),\\
		(\Delta_k^{(1)}f)^{\wedge}(\xi,y)=\psi_k(\xi)\widehat{f}(\xi,y),\;&\;(\Delta_k^{(2)}f)^{\wedge}(x,\eta)=\psi_k(\eta)\widehat{f}(x,\eta).
	\end{align*}
\vspace{0.5cm}
\begin{figure}[H]
	\begin{center}
		\begin{tikzpicture}[node distance=2cm]
			\node (start) [startstop] {Twisted Lacunary maximal function};
			\node (dec) [process, below of=start] {Littlewood-Paley Decomposition \eqref{decomposelacunary}};
			\node (low) [process, below right of=dec, xshift=-5cm] {Terms with low frequencies (\Cref{HLmax})};
			\node (high) [process, below left of=dec, xshift=5cm] {High frequency terms};
			\node (ppp) [process, below left of=high, xshift=-7cm, yshift=-1cm] {Weak $\left(\frac{2d+1}{2d},\frac{2d+1}{2d},\frac{2d+1}{4d}\right)$-Estimate \eqref{estimateGrowth}};
			\node (fiberwise) [process, below of=ppp] {Fiberwise Calder\'on-Zygmund Decomposition (\Cref{fiberwise:sec})};
			\node (singleavg) [process, below of=fiberwise] {$\left(\frac{2d+1}{2d},\frac{2d+1}{2d},\frac{2d+1}{4d}\right)$-bound for single average (\Cref{singleAvg})};
			\node (221) [process, below of=high, yshift=-1cm] {$(2,2,1)$-Estimate \eqref{estimateDecay}};
			\node (trilinear) [process, below of=221, yshift=-3cm] {Trilinear Smoothing Inequality (\Cref{Trilinearsmoothing})};
			\node (slicing) [process, below left of=221, xshift=-0.5cm, yshift=-1cm] {Slicing Argument};
			\draw [arrow] (start) -- (dec);
			\draw [arrow] (dec) -- (low);
			\draw [arrow] (dec) -- (high);
			\draw [arrow] (high) -- (ppp);
			\draw [arrow] (high) -- (221);
			\draw [arrow] (ppp) -- (fiberwise);
			\draw [arrow] (fiberwise) -- (singleavg);
			\draw [arrow] (221) -- node[anchor=west] {$d=1$} (trilinear);
			\draw [arrow] (221) -- node[anchor=east] {$d\geq2$} (slicing);
		\end{tikzpicture}
	\end{center}
	\caption{\label{figure1} Sketch of proof of \Cref{lacunary}}
\end{figure}  
	By decomposing the functions $f_1$ and $f_2$ into smooth Littlewood-Paley pieces defined as above, we have
	\begin{align*}
		\mathfrak{M}_{lac}(f_1,f_2)\leq&\sup_{j\in\Z}\left|\mathfrak{A}_{2^j}(\Theta_j^{(1)}f_1,\Theta_j^{(2)}f_2)\right|+\sup_{j\in\Z}\left|\mathfrak{A}_{2^j}((I-\Theta_j^{(1)})f_1,\Theta_j^{(2)}f_2)\right|\\
		+&\sup_{j\in\Z}\left|\mathfrak{A}_{2^j}(\Theta_j^{(1)}f_1,(I-\Theta_j^{(2)})f_2)\right|+\sum_{{k}=(k_1,k_2)\in\N^2}\sup_{j\in\Z}\left|T_{j,{k}}(f_1,f_2)\right|
	\end{align*}
	\begin{gather}\label{decomposelacunary}
		\begin{aligned}
			\leq&3\sup_{j\in\Z}\left|\mathfrak{A}_{2^j}(\Theta_j^{(1)}f_1,\Theta_j^{(2)}f_2)\right|+\sup_{j\in\Z}\left|\mathfrak{A}_{2^j}(f_1,\Theta_j^{(2)}f_2)\right|\\
			+&\sup_{j\in\Z}\left|\mathfrak{A}_{2^j}(\Theta_j^{(1)}f_1,f_2)\right|+\sum_{{k}\in\N^2}\sup_{j\in\Z}\left|T_{j,{k}}(f_1,f_2)\right|,
		\end{aligned}
	\end{gather}
	where the operator $T_{j,{k}}$ is defined as
	\[T_{j,{k}}(f_1,f_2)(x,y):=\mathfrak{A}_{2^{-j}}(\Delta_{j+k_1}^{(1)}f_1,\Delta_{j+k_2}^{(2)}f_2)(x,y).\]
	The desired $L^p$-estimates of the first three terms in \eqref{decomposelacunary} follows from the following lemma.
	\begin{lemma}\label{HLmax}
		Let $d\geq1$, $1<p_1,p_2\leq\infty$, and $\frac{1}{p}=\frac{1}{p_1}+\frac{1}{p_2}$. Then the maximal operators
		\[\sup\limits_{j\in\Z}\left|\mathfrak{A}_{2^{-j}}(\Theta_j^{(1)}f_1,f_2)\right|,~\sup\limits_{j\in\Z}\left|\mathfrak{A}_{2^{-j}}(\Theta_j^{(1)}f_1,\Theta_j^{(2)}f_2)\right|,\text{ and }\sup\limits_{j\in\Z}\left|\mathfrak{A}_{2^{-j}}(f_1,\Theta_j^{(2)}f_2)\right|\]
		are bounded from $L^{p_1}(\R^d)\times L^{p_2}(\R^d)$ to $L^p(\R^d)$.
	\end{lemma}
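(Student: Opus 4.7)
The plan is to prove the lemma by pointwise domination of each of the three maximal operators by a product of fiberwise Hardy--Littlewood type maximal functions, and then apply H\"older's inequality. The guiding principle is that $\Theta_j^{(l)}f_l$ is band-limited at scale $2^{j}$ in the $l$-th block of $d$ coordinates, so integrating it against the spherical measure with translation step $2^{-j}$ in the same block contributes at worst a Hardy--Littlewood maximal function.

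The technical input I would record first is the uniform shifted pointwise bound
\[
\sup_{j\in\Z,\,|w|\le 1}\bigl|(\Theta_j^{(1)}f_1)(x+2^{-j}w,y)\bigr|\lesssim M_1^{(1)}f_1(x,y),
\]
and the analogous bound for $\Theta_j^{(2)}$ shifted in the $y$-variable. This is standard: writing $\Theta_j^{(1)}$ as convolution with $2^{jd}\phi^{\vee}(2^j\cdot)$ and rescaling reduces the shifted kernel to $\phi^{\vee}(\cdot+w)$, which is dominated by a Schwartz tail centered at the origin uniformly in $|w|\le 1$. Substituting this bound inside $\mathfrak{A}_{2^{-j}}$ and using $\sigma(\Sp^{2d-1})<\infty$, I obtain
\[
\sup_j\bigl|\mathfrak{A}_{2^{-j}}(\Theta_j^{(1)}f_1,\Theta_j^{(2)}f_2)(x,y)\bigr|\lesssim M_1^{(1)}f_1(x,y)\,M_1^{(2)}f_2(x,y),
\]
while for the mixed operator
\[
\sup_j\bigl|\mathfrak{A}_{2^{-j}}(\Theta_j^{(1)}f_1,f_2)(x,y)\bigr|\lesssim M_1^{(1)}f_1(x,y)\cdot \sup_j\int_{\Sp^{2d-1}}|f_2(x,y+2^{-j}z_2)|\,d\sigma(z_1,z_2),
\]
with a symmetric bound in the third case. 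The $z_2$-integral on the right takes $|f_2|$ against the push-forward $\nu$ of $d\sigma_{\Sp^{2d-1}}$ to the $z_2$-fiber, an absolutely continuous measure on $\{|z_2|\le 1\}$ with density proportional to $(1-|z_2|^2)^{(d-2)/2}$.

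For $d\ge 2$ this density is bounded, so the kernel maximal is dominated pointwise by $M_1^{(2)}f_2$, and the lemma follows at once from H\"older's inequality together with the $L^{p_l}\to L^{p_l}$ boundedness of $M_1^{(l)}$ for $p_l>1$. The main obstacle is the case $d=1$, where $\nu$ is the arcsine distribution $(1-z_2^2)^{-1/2}\mathbf 1_{(-1,1)}(z_2)$, which is unbounded near $\pm 1$ and admits no pointwise domination by $M_1^{(2)}f_2$. Here I would invoke the classical fact (a Littlewood--Paley/Plancherel argument in the spirit of \cite{DR}) that the lacunary maximal function $g\mapsto\sup_j|g*\nu_{2^{-j}}|$ is bounded on $L^p(\R)$ for every $p>1$ whenever $|\widehat\nu(\xi)|\lesssim(1+|\xi|)^{-\alpha}$ for some $\alpha>0$. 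For the arcsine measure one has $\widehat\nu(\xi)=\pi J_0(\xi)$ with $|J_0(\xi)|\lesssim(1+|\xi|)^{-1/2}$, so the hypothesis holds with $\alpha=1/2$. Applying this bound in the $y$-variable fiberwise in $x$ gives an $L^{p_2}$-estimate on the kernel maximal, and H\"older's inequality then yields the claimed $L^p$ estimate for all $1<p_1,p_2\le\infty$.
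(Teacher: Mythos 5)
Your proof is correct and for $d\geq2$ it follows exactly the same slicing route as the paper; the genuine difference is in the critical case $d=1$. The paper handles the one-dimensional kernel maximal
\[
\sup_{j}\int_0^1 |f_2(x,y-2^{-j}t)|\,\frac{dt}{\sqrt{1-t^2}}
\]
by an elementary and explicit decomposition near the singular endpoint $t=1$: it splits $[0,1]$ into dyadic slices $I_k$ on which $(1-t^2)^{-1/2}\sim 2^{k/2}$, recognizes each piece $\sup_j |I_k|^{-1}\int_{I_k}|f_2(x,y-2^{-j}t)|\,dt$ as a (shifted) dyadic maximal average with shift parameter $\sim 2^k$, and sums the resulting $L^p$ bounds against the geometric weight $2^{-k/2}$. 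You instead identify the pushed-forward measure on the $z_2$-fiber as the arcsine distribution $\nu(dt)=(1-t^2)^{-1/2}\mathbf 1_{(-1,1)}\,dt$, compute $\widehat\nu(\xi)=\pi J_0(\xi)$ so that $|\widehat\nu(\xi)|\lesssim(1+|\xi|)^{-1/2}$, and appeal to the classical Duoandikoetxea--Rubio de Francia/Calder\'on result that the lacunary maximal operator associated with a compactly supported finite measure whose Fourier transform decays at some positive power is bounded on $L^p$ for all $p>1$; you then apply this fiberwise in $y$ and conclude by H\"older. Both routes are valid and give the full range $1<p_1,p_2\leq\infty$; the paper's argument is self-contained and elementary (no Fourier transform computation), while yours is shorter conceptually and delegates the work to a standard lemma. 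One point worth making explicit in your write-up: the cited lemma as usually stated requires the measure to be compactly supported (which is automatic here) and the $L^p$-bound for $p<2$ is obtained via a vector-valued Calder\'on--Zygmund or interpolation argument rather than Plancherel alone, so the phrase ``Littlewood--Paley/Plancherel argument'' undersells what is invoked below $L^2$; still, the invoked result is correct and covers the arcsine measure.
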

	\subsection*{Proof of \Cref{HLmax}}	First, observe that the fiberwise convolution with $\Theta_j^{(l)}$ can be controlled by the Hardy-Littlewood maximal function in respective variable. For, $\phi\in\mathscr S(\R^d)$ and $|z_1|\leq 1$, we get that
	\begin{align*}		|\Theta_j^{(1)}f_1(x-2^{-j}z_1,y)|
		&=\left|\int_{\R^d} f_1(z,y)2^{jd}\phi(2^{j}(x-2^{-j}z_1-z)~dz\right|\\
		&\lesssim \left|\int_{\R^d} \frac{2^{jd}f_1(z,y)}{\Big(1+|2^{j}(x-2^{-j}z_1-z)|\Big)^{N}}dz\right|\\
		&\lesssim  M^{(1)}f_1(x,y).
	\end{align*}
	Now, we consider the case of $d=1$. Note that we have
	\[\sup_{j\in\mathbb{Z}}\left|\mathfrak{A}_{2^{-j}}(\Theta_j^{(1)}f_1,f_2)(x,y)\right|\leq\sup_{j\in\mathbb{Z}}\sup_{|z_1|\leq1}|\Theta_j^{(1)}f_1(x-2^{-j}z_1,y)|\int_{\mathbb{S}^{1}}|f_2(x,y-2^{-j}z_2)|d\sigma(z_1,z_2).\]
	By symmetry and a change of variable, it is enough to consider
	\begin{align*}
		\sup_{j\in\mathbb{Z}}\int_0^1|f_2(x,y-2^{-j}t)|\frac{dt}{\sqrt{1-t^2}}&\simeq\sup_{j\in\mathbb{Z}}\sum_{k=1}^\infty2^\frac{k}{2}\int_{\sqrt{1-2^{-k+1}}}^{\sqrt{1-2^{-k}}}|f_2(x,y-2^{-j}t)|~dt\\
		&=\sum_{k=1}^\infty2^{-\frac{k}{2}}\sup_{j\in\mathbb{Z}}\frac{1}{|I_k|}\int_{I_k}|f_2(x,y-2^{-j}t)|~dt,
	\end{align*}
	where $I_k=[\sqrt{1-2^{-k+1}},\sqrt{1-2^{-k}}]$. Using the $L^p$-boundedness properties of shifted dyadic maximal function, we can get that
	\[\Big\|\sup_{j\in\mathbb{Z}}\Big|\mathfrak{A}_{2^{-j}}(\Theta_j^{(1)}f_1,f_2)\Big|\Big\|_p\lesssim\|f_1\|_{p_1}\|f_2\|_{p_2}\]
	when $1<p_1,p_2\leq\infty$ and $\frac{1}{p}=\frac{1}{p_1}+\frac{1}{p_2}$.
	
	Next, for $d\geq2$, a slicing argument gives us
	\begin{align*}
		\sup_{j\in\mathbb{Z}}\left|\mathfrak{A}_{2^{-j}}(\Theta_j^{(1)}f_1,f_2)(x,y)\right|&\leq \sup_{j\in\mathbb{Z}}\sup_{|z_1|\leq1}|\Theta_j^{(1)}f_1(x-2^{-j}z_1,y)|\int_{\mathbb{S}^{2d-1}}|f_2(x,y-2^{-j}z_2)|d\sigma(z_1,z_2)\\
		&\lesssim M^{(1)}f_1(x,y)\sup_{j\in\mathbb{Z}}\int_{B^{d}(0,1)}|f_2(x,y-2^{-j}z_2)|(1-|z_2|^{2})^{(d-2)/2}\int_{\mathbb{S}^{d-1}}d\sigma(z_1)~dz_2\\
		&\leq M^{(1)}f_1(x,y)\sup_{j\in\mathbb{Z}}\int_{B^{d}(0,a)}|f_2(x,y-2^{-j}z_2)|~dz_2\\
		&\lesssim M^{(1)}f_1(x,y)M^{(2)}f_2(x,y).
	\end{align*}
	The $L^p$-estimates of $M^{(j)}$ for $j=1,2$ proves the desired result. Similarly, we can obtain the $L^p$-estimates for the remaining two operators in \Cref{HLmax}. \qed
	
	Continuing with the proof of \Cref{lacunary}, note that we need to establish 
	appropriate $L^p$-estimates of the fourth term 
	$$\sum_{{k}\in\N^2}\sup_{j\in\Z}\left|T_{j,{k}}(f_1,f_2)\right|$$ 
	in the~\Cref{decomposelacunary}. The desired estimate of this term is a consequence of the following two key propositions. 
	The first proposition is a weak type $\left(\frac{2d+1}{2d},\frac{2d+1}{2d},\frac{2d+1}{4d}\right)$-estimate 
	with a quadratic growth on the scale ${k}$. Indeed, we have
	\begin{proposition}\label{estimateGrowth}
		Let ${k}\in\N^2$. Then, for $f_1,f_2\in\mathcal{S}(\R^{2d})$, we have
		\[\Big\|\sup_{j\in\Z}|T_{j,{k}}(f_1,f_2)|\Big\|_{L^{\frac{2d+1}{4d},\infty}}\lesssim |{k}|^{2}\|f_1\|_{L^{\frac{2d+1}{2d}}}\|f_2\|_{L^{\frac{2d+1}{2d}}}.\]
	\end{proposition}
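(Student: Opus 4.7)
The proof hinges on two ingredients: a linearization of the supremum over $j\in\Z$ and a fiberwise bilinear Calder\'on-Zygmund argument. Write $p=\tfrac{2d+1}{2d}$, so that the target exponent is $p/2=\tfrac{2d+1}{4d}$. I first select, for each $(x,y)\in \R^{2d}$, a measurable $j_0(x,y)\in\Z$ with
\[
\sup_{j\in\Z} |T_{j,k}(f_1,f_2)(x,y)| \leq 2\, |T_{j_0(x,y),k}(f_1,f_2)(x,y)|,
\]
and set $\widetilde T_k(f_1,f_2)(x,y)=T_{j_0(x,y),k}(f_1,f_2)(x,y)$. It then suffices to establish a weak-type $(p,p,p/2)$ bound for $\widetilde T_k$ with constant $\lesssim |k|^{2}$.

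At a height $\alpha$ chosen according to $\alpha$, $\|f_1\|_p$, and $\|f_2\|_p$, I would apply the fiberwise Calder\'on-Zygmund decomposition developed in \Cref{fiberwise:sec} to $f_1$ in the $x$-variable (for each fixed $y$) and to $f_2$ in the $y$-variable (for each fixed $x$). This produces splittings $f_l=g_l+b_l$ for $l=1,2$, with $g_l$ bounded in $L^\infty$ by the stopping height, and $b_l=\sum_{Q}b_{l,Q}$ supported on a union of fiberwise dyadic cubes $Q$, each $b_{l,Q}$ having fiberwise mean zero and controlled $L^p$ norm, with the total measure of the exceptional set being small. Correspondingly, $\widetilde T_k(f_1,f_2)$ splits into four terms.

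The good-good piece $\widetilde T_k(g_1,g_2)$ is handled by the single-average bound of \Cref{singleAvg}, which provides the $\bigl(\tfrac{2d+1}{2d},\tfrac{2d+1}{2d},\tfrac{2d+1}{4d}\bigr)$ estimate at a single scale, combined with the $L^\infty$ control of $g_l$ and Chebyshev's inequality. For the three terms involving at least one bad function, I would split the CZ cubes by side length $2^{-m}$ and compare $m$ with $j_0(x,y)+k_l$: when $m\geq j_0(x,y)+k_l$, the frequency localization of $\Delta^{(l)}_{j+k_l}$ together with the fiberwise mean zero of $b_{l,Q}$ produces a cancellation gain that decays in $m-j_0(x,y)-k_l$; when $m<j_0(x,y)+k_l$, the cube is coarse and one pays instead with the small total measure of the exceptional set. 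Summing the resulting telescoping estimates in $m$ yields a factor linear in each $|k_l|$, which when coupled across the two Littlewood-Paley pieces produces the $|k|^{2}$ growth.

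The main obstacle is that $j_0(x,y)$ varies pointwise, so the two fiberwise Calder\'on-Zygmund decompositions, which act in perpendicular directions, cannot be decoupled by Fubini. The bad-bad contribution in particular requires a simultaneous pigeonholing of the side lengths of the two families of cubes against $2^{-j_0(x,y)}$, and the cancellations must be invoked on each fiber after the pointwise selection of $j_0$. Executing this interaction carefully, while preserving the fiberwise Calder\'on-Zygmund control and extracting only polynomial growth in $|k|$, is the heart of the argument and I expect it to absorb most of the work.
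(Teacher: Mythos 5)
Your overall blueprint is the one the paper uses: a fiberwise Calder\'on--Zygmund decomposition of $f_1$ in $x$ and of $f_2$ in $y$ at height comparable to $\alpha^{1/2}$, a single-scale $L^{\frac{2d+1}{2d}}\times L^{\frac{2d+1}{2d}}\to L^{\frac{2d+1}{4d}}$ bound as in \Cref{singleAvg}, an exceptional set built from enlarged fiberwise cubes, and a localization-vs-cancellation dichotomy in the cube side lengths producing the $|k|^2$ growth. The mechanics diverge in two places worth flagging.

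First, the ``main obstacle'' you identify at the end is not actually present in the paper's route. You linearize the supremum by choosing a measurable $j_0(x,y)$, which forces a coupling between the pointwise scale $j_0(x,y)$ and the cube sizes in the two transverse CZ decompositions, and you correctly anticipate this coupling is painful. The paper sidesteps it entirely: since the target exponent $\tfrac{2d+1}{4d}$ is strictly less than $1$, one has $\sup_j|a_j|^{\frac{2d+1}{4d}}\le\sum_j|a_j|^{\frac{2d+1}{4d}}$, and after splitting each bad function $h_l$ into pieces $h_l^{i_l}$ of fixed fiberwise side length $2^{-i_l}$, the bad--bad contribution reduces to $\sum_{i_1,i_2,j}\|\mathfrak{A}_{2^j}(\Delta^{(1)}_{j+k_1}h_1^{i_1},\Delta^{(2)}_{j+k_2}h_2^{i_2})\|^{\frac{2d+1}{4d}}_{L^{\frac{2d+1}{4d}}(\R^{2d}\setminus E)}$. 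On each term one applies \Cref{singleAvg} directly (no pointwise-varying $j_0$), then gains $2^{(j+k_l-i_l)/2}$ from the moment condition when $i_l>j+k_l$ and $2^{i_l-j}$ from the support and the exceptional set when $i_l<j$; summing in $j$ and then Cauchy--Schwarz in $(i_1,i_2)$ produces the $|k|^2$. So you should drop the linearization.

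Second, your sorting of the four pieces differs from the paper's. The good--good piece needs only the trivial $L^\infty$ estimate $\sup_j|T_{j,k}(g_1,g_2)|\le\|g_1\|_\infty\|g_2\|_\infty\lesssim\alpha$; \Cref{singleAvg} is not needed there, but \emph{is} needed in the bad--bad piece (on $\R^{2d}\setminus E$). The mixed pieces $(g_1,h_2)$ and $(h_1,g_2)$ are \emph{not} treated by cube-side-length cancellation at all; instead the paper uses $\|\Delta^{(1)}_{j+k_1}g_1\|_\infty\lesssim\alpha^{1/2}$ together with a Hardy--Littlewood-type $(\infty,p,p)$ bound for $\sup_j|T_{j,k}(1,\cdot)|$ as in \Cref{HLmax}, followed by Chebyshev. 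You lump these into the ``bad'' analysis, which would require you to reproduce an argument the paper gets for free. As written, your proposal states the hard interaction must be ``executed carefully'' without showing how; the above routing removes that interaction entirely.
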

	The second result provides a strong type $(2,2,1)$-estimate with 
	exponential decay in the parameter ${k}$. In particular, we have 
	\begin{proposition}\label{estimateDecay}
		There exists $\mathfrak{C}>0$ such that for $j\in\Z,\;{k}\in\N^2$ and $f_1,f_2\in\mathscr{S}(\R^{2d})$, we have
		\[\Big\|\sum_{j\in\Z}T_{j,{k}}(f_1,f_2)\Big\|_{L^1}\lesssim 2^{-\mathfrak{C}|{k}|}\|f_1\|_{L^2}\|f_2\|_{L^2}.\]
	\end{proposition}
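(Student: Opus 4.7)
The plan is to reduce \Cref{estimateDecay} to the single-scale bound
\[
\|T_{j,k}(f_1,f_2)\|_{L^1(\R^{2d})}\lesssim 2^{-\mathfrak{C}|k|}\|\Delta_{j+k_1}^{(1)}f_1\|_{L^2}\|\Delta_{j+k_2}^{(2)}f_2\|_{L^2},
\]
uniform in $j\in\Z$, and then to assemble the claim by the triangle inequality in $L^1$, Cauchy--Schwarz in $j$, and Littlewood--Paley almost-orthogonality. Granting the single-scale bound,
\[
\Big\|\sum_j T_{j,k}(f_1,f_2)\Big\|_{L^1}\leq\sum_j \|T_{j,k}(f_1,f_2)\|_{L^1}\lesssim 2^{-\mathfrak{C}|k|}\Big(\sum_j\|\Delta_{j+k_1}^{(1)}f_1\|_{L^2}^2\Big)^{1/2}\Big(\sum_j\|\Delta_{j+k_2}^{(2)}f_2\|_{L^2}^2\Big)^{1/2}\lesssim 2^{-\mathfrak{C}|k|}\|f_1\|_{L^2}\|f_2\|_{L^2},
\]
the last step being fiberwise Plancherel together with the bounded overlap of $\{\psi_{j+k_l}\}_{j\in\Z}$.

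For $d\ge 2$, I would prove the single-scale bound by slicing: parametrizing $(z_1,z_2)\in\Sp^{2d-1}$ as $(r\omega_1,\sqrt{1-r^2}\,\omega_2)$ with $r\in[0,1]$, $\omega_l\in\Sp^{d-1}$, one has the factorization
\[
\mathfrak{A}_{2^{-j}}(g_1,g_2)(x,y) = c_d\int_0^1 r^{d-1}(1-r^2)^{(d-2)/2}\,A^{(1)}_{2^{-j}r}g_1(x,y)\,A^{(2)}_{2^{-j}\sqrt{1-r^2}}g_2(x,y)\,dr.
\]
Applied to $g_l=\Delta_{j+k_l}^{(l)}f_l$, Cauchy--Schwarz in $(x,y)$ inside the $L^1$-norm together with the standard fiberwise $L^2$ decay $\|A_s\Delta_{j+k_l}^{(l)}h\|_{L^2(\R^d)}\lesssim\min(1,(s\cdot 2^{j+k_l})^{-(d-1)/2})\|\Delta_{j+k_l}^{(l)}h\|_{L^2}$, which follows from the stationary-phase bound on $\hat{\sigma}_{d-1}$, reduces matters to the elementary scalar estimate
\[
\int_0^1 r^{d-1}(1-r^2)^{(d-2)/2}\min\!\bigl(1,(r\,2^{k_1})^{-(d-1)/2}\bigr)\min\!\bigl(1,(\sqrt{1-r^2}\,2^{k_2})^{-(d-1)/2}\bigr)\,dr\lesssim 2^{-(k_1+k_2)(d-1)/2},
\]
handled by splitting the region of integration according to where each $\min$ is saturated.

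For $d=1$, slicing is no longer effective, and the argument leans essentially on \Cref{Trilinearsmoothing}. I would rescale via $\tilde g_l(x,y):=(\Delta_{j+k_l}^{(l)}f_l)(2^{-j}x,2^{-j}y)$, which maps $\mathfrak{A}_{2^{-j}}$ to the unit-scale operator $T$; indeed $T(\tilde g_1,\tilde g_2)(x,y)=T_{j,k}(f_1,f_2)(2^{-j}x,2^{-j}y)$, while the Fourier support of $\tilde g_l$ in the $l$-th variable is pushed down to $|\cdot|\sim 2^{k_l}$. With $\lambda=2^{\max(k_1,k_2)}$, case (i) of \Cref{Trilinearsmoothing} (if $k_1\ge k_2$) or case (ii) (if $k_1<k_2$) then yields
\[
\|T(\tilde g_1,\tilde g_2)\|_{L^1(\R^2)}\lesssim 2^{-\mathfrak{C}\max(k_1,k_2)}\|\tilde g_1\|_{L^2}\|\tilde g_2\|_{L^2}.
\]
Undoing the dilation contributes $2^{-2j}$ on the $L^1$ side and $2^j$ in each $L^2$-norm, and the $j$-factors cancel to leave the desired single-scale bound. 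The main obstacle is concentrated entirely in \Cref{Trilinearsmoothing}: once that deep trilinear smoothing inequality is granted, the remainder of the proof is a rescaling followed by fiberwise Littlewood--Paley orthogonality, while the $d\ge 2$ regime reduces to classical $L^2$ estimates for linear spherical averages.
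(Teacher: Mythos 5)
Your proposal is correct and, for $d\ge 2$, it is essentially the paper's argument: the same slicing into radial shells, Cauchy--Schwarz in $(x,y)$ pulling the $L^1$ integral into a product of fiberwise $L^2$ norms, the stationary-phase decay $|\widehat{\sigma}_{d-1}|\lesssim(1+|\cdot|)^{-(d-1)/2}$ applied after frequency localization, the elementary $r$-integral, and finally Cauchy--Schwarz in $j$ together with fiberwise Plancherel/bounded overlap of the Littlewood--Paley annuli. The only cosmetic difference is that the paper inserts an extra reproducing projection $\Delta_{j+k_l+1}^{(l)}$ to exhibit the summable quantity on the right, but this plays exactly the role your almost-orthogonality remark does.

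For $d=1$, the paper's proof of this proposition does not actually carry out the reduction to \Cref{Trilinearsmoothing}: it states that the trilinear smoothing inequality is ``the main step'' and then devotes the remaining sections to proving that inequality, leaving implicit how it delivers the desired single-scale $L^2\times L^2\to L^1$ bound with exponential decay in $|k|$. Your rescaling argument --- dilating by $2^{j}$ to convert $T_{j,k}$ to the unit-scale operator $T$, noting the Jacobian factors $2^{-2j}$ and $2^{j}\cdot 2^{j}$ cancel, observing the rescaled Fourier supports sit at scales $2^{k_1}$ and $2^{k_2}$, and invoking case (i) or (ii) of \Cref{Trilinearsmoothing} with $\lambda=2^{\max(k_1,k_2)}$ to extract $2^{-\mathfrak{C}\max(k_1,k_2)}\le 2^{-\mathfrak{C}|k|/2}$ --- is the correct way to make this reduction explicit, and it is consistent with the role the paper assigns to the trilinear estimate. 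So your write-up both matches the paper's strategy and supplies a step the paper leaves to the reader.
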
 
	Observe that the $L^p$-estimates of the fourth term 
	$\sum_{{k}\in\N^2}\sup_{j\in\Z}\left|T_{j,{k}}(f_1,f_2)\right|$ in \Cref{decomposelacunary} 
	for the entire range of exponents, as claimed in \Cref{lacunary}, follow
	by interpolating between \Cref{estimateGrowth} and \Cref{estimateDecay} and finally summing over the 
	parameter ${k}$. Therefore, the proof of \Cref{lacunary} is complete under the assumption that 
	\Cref{estimateGrowth} and \Cref{estimateDecay} hold. 
	\subsection*{About proof of \Cref{estimateGrowth}} The proof of \Cref{estimateGrowth} is based on 
	the bilinear Calder\'{o}n-Zygmund theory adapted to fiberwise decomposition of the functions. We also require $L^p$-estimates 
	for the single scale averaging operator $\mathfrak{A}_1,$ see \Cref{singleAvg}, in order to apply the fiberwise bilinear Calder\'{o}n-Zygmund theory. 
	Since this is one of the major steps in the proof of \Cref{lacunary}, we present the proof of \Cref{estimateGrowth} in \Cref{fiberwise:sec}. 
	\subsection*{Proof of \Cref{estimateDecay}}
	The proof of \Cref{estimateDecay} for the case of dimensions $d\geq2$ is obtained by the modified slicing argument. We present it below.  
	
	By an application of modified slicing argument and Cauchy-Schwartz inequality, we have
	\begin{align*}
		&\|T_{j,{k}}(f_1,f_2)\|_1\\
		\leq&\int\limits_{\R^{2d}}\int\limits_{0}^1 \Big|A_r^{(1)}\Delta_{j+k_1}^{(1)}\Delta_{j+k_1+1}^{(1)}f_1(x,y) A_{\sqrt{1-r^2}}^{(2)}\Delta_{j+k_2}^{(2)}\Delta_{j+k_2+1}^{(2)}f_2(x,y)\Big|r^{d-1}(1-r^2)^\frac{d-2}{2}\;drdxdy\\
		\leq&\int\limits_{0}^1 \Big\|A_r^{(1)}\Delta_{j+k_1}^{(1)}\Delta_{j+k_1+1}^{(1)}f_1\Big\|_2 \Big\|A_{\sqrt{1-r^2}}^{(2)}\Delta_{j+k_2}^{(2)}\Delta_{j+k_2+1}^{(2)}f_2\Big\|_2r^{d-1}(1-r^2)^\frac{d-2}{2}\;drdxdy\\
		\lesssim&2^{-(k_1+k_2)(\frac{d-1}{2})}\|\Delta_{j+k_1+1}^{(1)}f_1\|_2\|\Delta_{j+k_2+1}^{(2)}f_2\|_2\int_0^1r^{\frac{d-1}{2}}(1-r^2)^{\frac{d-3}{4}}\;dr\\
		\lesssim&2^{-|{k}|(\frac{d-1}{2})}\|\Delta_{j+k_1+1}^{(1)}f_1\|_2\|\Delta_{j+k_2+1}^{(2)}f_2\|_2.
	\end{align*}
	In the fourth line of the expression above we have used the Plancherel theorem in $\R^d$ for each fiber. 
	
	We conclude the proof of \Cref{estimateDecay} for $d\geq 2$ by summing over $j\in\Z$ and using the fact that the Fourier supports of functions $\Delta_{j+1}^{(l)}f_l,j\in\Z$ have bounded overlap for each $l=1,2$.
	
	Next, we discuss the case of dimension $d=1$. This case is subtle. Indeed, it forms the core of the paper. The proof is motivated by the techniques developed in~\cite{CG, CDR, ChristZhou}. The main step in the proof of \Cref{lacunary} for $d=1$ is the trilinear smoothing inequality, which is the content of in \Cref{Trilinearsmoothing}. The rest of the paper is devoted to establishing \Cref{Trilinearsmoothing}. Further, in the proof of \Cref{Trilinearsmoothing} the following theorem for sublevel sets is crucial. 
	\begin{theorem}\label{Sublevelsetestimate}
		Let $I=[\delta,2\delta]$, $K \subseteq \mathbb{R}^{2} \times I$ be a compact set. Let $\alpha,\beta:\R^2\to\R$ be measurable functions such that either
		\begin{enumerate}[(i)]
			\item $|\alpha(t)|\sim1$ and $|\beta(t)|\lesssim1$ for all $t\in I$, or
			\item $|\alpha(t)|\lesssim1$ and $|\beta(t)|\sim1$ for all $t\in I$.
		\end{enumerate}
		Then, for all $\epsilon\in(0,1]$, we have
		\[|\{(x,y,t)\in K:\;|\alpha(x+\cos t,y)\sin t+\beta(x,y+\sin t)\cos t|\lesssim\delta\epsilon\}|\lesssim\delta\epsilon^{\frac{1}{35}}.\]
	\end{theorem}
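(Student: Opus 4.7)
The plan is to deduce the sublevel set estimate from a fiberwise one-dimensional analysis by Fubini, and then extract the polynomial gain $\epsilon^{1/35}$ via a dyadic decomposition of the values of $\alpha$ and $\beta$, in the spirit of the sublevel set techniques of \cite{ChristZhou,CG,CDR}. The two hypotheses (i) and (ii) are handled by analogous arguments after swapping the roles of $\alpha,\beta$ and of $\cos t,\sin t$, so I will focus on case (i). If $\delta \gtrsim 1$, the bound is immediate from $|K|\lesssim \delta$; I therefore assume $\delta$ small, so that $\sin t \sim t$ and $\cos t \sim 1$ throughout $I$.

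The key structural reduction is the change of variable $w = y + \sin t$ at each fixed $t$ (Jacobian one), which decouples $\beta$ from the $t$-parameter. In the coordinates $(x,w,t)$ the phase becomes
\[
F(x,w,t) \;=\; \alpha(x+\cos t,\, w-\sin t)\,\sin t \;+\; \beta(x,w)\,\cos t,
\]
so that for each fixed $(x,w)$ the term $\beta_0 := \beta(x,w)$ is a constant in $t$ and the sublevel condition $|F|\le \delta\epsilon$ reads
\[
\alpha(x+\cos t,\, w-\sin t)\,\tan t \;\in\; -\beta_0 + O(\delta\epsilon).
\]
Since $|\alpha|\sim 1$ and $\tan t \sim \delta$ on $I$, this forces $|\beta_0|\sim \delta$ in the contributing region, already truncating the $(x,w)$-slice. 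Were $\alpha$ additionally essentially constant along the short arc $\{(x+\cos t,\,w-\sin t):t\in I\}$, this would pin $\tan t$ to an interval of length $\lesssim \delta\epsilon$, giving the near-optimal gain $\epsilon$ at once.

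The main obstacle is that $\alpha$, being only measurable, may oscillate arbitrarily along the arc, so no derivative-based implicit-function argument is available. To handle this I would partition the ranges of $\alpha$ and $\beta$ into level sets at a scale $\epsilon^{a}$ (with $a\in(0,1)$ to be optimized). On each product of such level sets, the constraint on $\tan t$ is almost linear and pins $t$ to an interval of length $\lesssim \delta\,\epsilon^{1-a}$, while there are only $O(\epsilon^{-a})$ level sets of $\beta$ to sum against. Integrating over $(x,w)\in K$ and accounting, via a Cauchy--Schwarz / combinatorial counting step, for the cells where $\alpha$ does vary significantly along the arc yields a bound of the form $\delta\,\epsilon^{\gamma(a)}$. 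The specific exponent $\gamma=\tfrac{1}{35}$ is the quantitative outcome of optimizing $a$ and balancing it against the loss in the combinatorial step, following the scheme of \cite{CG,CDR,ChristZhou}; the detailed bookkeeping is carried out in \Cref{sublevel:sec}. The hardest part is precisely this optimization: every naive attempt loses more on summing over level sets of the rough function than it gains from the sublevel width, and it is only the careful interplay of the non-degeneracy $|\alpha|\sim 1$ with the geometry of the arc $t\mapsto (x+\cos t, w-\sin t)$ that closes the budget.
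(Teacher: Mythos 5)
Your opening reduction is sound and essentially mirrors the paper's: rewriting the phase so that one of the two rough functions becomes constant in $t$ (you decouple $\beta$ via $w = y+\sin t$; the paper decouples $\alpha$ via $z=(x+\cos t, y)$ and then divides by $\sin t$), and noting that $|\beta|\sim\delta$ is forced and that a single 1D sublevel estimate would be trivial if $\alpha$ were smooth along the arc. This is the correct setup.

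The gap is everything after that. The level-set decomposition you propose does not, by itself, give any gain, and you essentially concede this: if you slice the range of $\alpha$ into $O(\epsilon^{-a})$ cells of width $\epsilon^a$, each cell pins $t$ to an interval of length $\lesssim\delta\epsilon^a$ (note: $\epsilon^a$, not $\epsilon^{1-a}$ as you write), and summing over the $\epsilon^{-a}$ cells recovers the trivial bound $\delta$. The phrase ``Cauchy--Schwarz / combinatorial counting step, for the cells where $\alpha$ does vary significantly along the arc'' is not an argument; it is a name for the part you have not done. The rigidity you allude to --- that $\alpha,\beta$ are fixed functions of two variables, so they cannot conspire with $\beta_0$ on every slice --- is real, but extracting a quantitative gain from it requires a genuinely different mechanism.

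What the paper actually does, and what is missing from your proposal, is Christ's \emph{method of refinements}: one constructs iteratively refined subsets $\mathcal{E}_0',\mathcal{E}_1,\mathcal{E}_1',\mathcal{E}_2,\mathcal{E}_2'$ of the sublevel set (each with a lower measure bound in terms of $|\mathcal{E}(\epsilon)|$), fixes a point $\bar{z}\in\mathcal{E}_2'$, and produces a set $\mathcal{A}\subset I^3$ with $|\mathcal{E}(\epsilon)|\lesssim\delta^{4/7}|\mathcal{A}|^{1/7}$ on which \emph{three} sublevel conditions hold simultaneously at arc-translates of $\bar z$. Combining those three conditions by telescoping eliminates all but one evaluation of $\alpha$ (at the fixed point $\bar z$) and one evaluation of $\beta$, yielding a single scalar inequality $|J(t_1,t_2,t_3)|\lesssim\delta\epsilon$ with
\[
J(t_1,t_2,t_3)=\alpha(\bar z)\cot(t_1)\tan(t_2)\cot(t_3)-\beta\bigl(\bar z+\Gamma(t_1)-\Gamma(t_2)+\Gamma(t_3)\bigr).
\]
One then changes variables by $\boldsymbol{T}(t_1,t_2,t_3)=(\cos t_1-\cos t_2+\cos t_3,\ \sin t_1-\sin t_2+\sin t_3,\ t_1)$, whose Jacobian is $|\sin(t_2-t_3)|$; in the new coordinates $\beta(\cdot)$ depends only on $(u,v)$, so the problem becomes a genuine 1D sublevel estimate in $w=t_1$, and the key step is a trigonometric identity giving a lower bound $|\partial_w J|\gtrsim\delta^{-2}\epsilon^{2a}$ on the well-separated region (this is the only place $|\alpha|\sim1$ is used). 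The decomposition into well/poorly separated triples and the optimization of the exponents $a,b$ then produce the $\epsilon^{1/35}$. None of the refinement step, the telescoping of three conditions into one, the specific change of variables, the quantitative inverse function theorem, or the trigonometric identity appears in your proposal, and your citation of ``\Cref{sublevel:sec}'' for the ``detailed bookkeeping'' is circular --- that section \emph{is} the proof you are being asked to supply.
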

	\Cref{sublevel:sec} is devoted to prove \Cref{Sublevelsetestimate}. 
	\section{Fiberwise bilinear Calder\'{o}n-Zygmund theory: Proof of \Cref{estimateGrowth}}\label{fiberwise:sec}
	First, we establish the following $L^p$-estimates of single scale twisted spherical averages. 
	\begin{proposition}\label{singleAvg}
		Let $d\geq1$. We have
		\[\|\mathfrak{A}_{1}(f_1,f_2)\|_{L^\frac{2d+1}{4d}(\R^{2d})}\lesssim \|f_1\|_{L^\frac{2d+1}{2d}(\R^{2d})}\|f_2\|_{L^\frac{2d+1}{2d}(\R^{2d})},\;\text{for all}\;f_1,f_2\in\mathcal{S}(\R^{2d}).\]
	\end{proposition}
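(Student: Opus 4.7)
The plan is to slice the surface measure on $\Sp^{2d-1}$ into a product of two lower-dimensional sphere measures, reducing $\mathfrak{A}_1$ to a one-parameter integral of products of linear spherical averages acting separately in the $x$- and $y$-variables, and then apply sharp fiberwise $L^p$-improving. Parameterizing $\Sp^{2d-1}$ by $(z_1,z_2)=(r\omega_1,\sqrt{1-r^2}\,\omega_2)$ with $r\in[0,1]$ and $\omega_i\in\Sp^{d-1}$, the polar decomposition of $d\sigma$ yields
\[
\mathfrak{A}_1(f_1,f_2)(x,y)=c_d\int_0^1 r^{d-1}(1-r^2)^{(d-2)/2}\,A_r^{(1)}f_1(x,y)\cdot A_{\sqrt{1-r^2}}^{(2)}f_2(x,y)\,dr,
\]
where $A_r^{(l)}$ are as in \Cref{sec:prooffull}. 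Note that the input exponent $\frac{2d+1}{2d}$ matches exactly the sharp endpoint $L^{(2d+1)/(2d)}(\R^{2d})\to L^{2d+1}(\R^{2d})$ of the Littman--Strichartz $L^p$-improving for convolution with $\sigma_{2d-1}$ on $\R^{2d}$, which suggests that the bilinear endpoint sought here is essentially an avatar of this linear improving, disentangled along the twisted tensor structure.

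For $d\geq 2$, I would combine the slicing identity with the fiberwise Littman--Strichartz improving for the $(d-1)$-spheres in $\R^d$ on which $A_r^{(1)}$ and $A_{\sqrt{1-r^2}}^{(2)}$ are based. Pairing the two outputs via a mixed-norm H\"older inequality, and tracking the $r$-scaling carefully against the slicing weight $r^{d-1}(1-r^2)^{(d-2)/2}$, should produce the bilinear endpoint bound into $L^{(2d+1)/(4d)}(\R^{2d})$. Since the target exponent $r_0=\frac{2d+1}{4d}\in(\tfrac12,1)$ is strictly less than one, the $r$-integration cannot be executed via the standard Minkowski integral inequality; instead I would decompose $[0,1]$ into dyadic intervals on which the weight is essentially constant and the fiberwise improving produces an exponential gain $2^{-k\epsilon}$ for some $\epsilon>0$, then assemble the pieces using the quasi-triangle inequality
\[
\Big\|\sum_k g_k\Big\|_{L^{r_0}}^{r_0}\leq\sum_k\|g_k\|_{L^{r_0}}^{r_0},
\]
valid in the quasi-Banach range $r_0\leq 1$.

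The case $d=1$ is delicate because $A_r^{(l)}$ reduces to a two-point average with no fiberwise smoothing, ruling out any Strichartz-based strategy. Here I would parameterize $\Sp^1$ by $\theta\mapsto(\cos\theta,\sin\theta)$, decompose the $\theta$-integral into dyadic pieces near $\theta=0,\pi/2$ exactly as in the proof of \Cref{full} for $d=1$, and apply shifted Hardy--Littlewood maximal estimates in the separate variables together with H\"older's inequality on each dyadic piece. The principal obstacle throughout is that the exponent $\frac{2d+1}{4d}$ lies precisely on the H\"older line with input $\frac{2d+1}{2d}$, so it cannot be reached by interpolation between the trivial endpoints $L^2\times L^2\to L^1$ and $L^\infty\times L^\infty\to L^\infty$; the curvature of the sphere must be genuinely invoked through the fiberwise $L^p$-improving (for $d\geq 2$) or the dyadic circular analysis (for $d=1$), while simultaneously the quasi-Banach regime $r_0<1$ forces the use of dyadic summation in place of Minkowski.
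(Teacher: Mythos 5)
Your proposal takes a genuinely different route (slicing $\mathbb S^{2d-1}$ into products of $(d-1)$-spheres plus fiberwise $L^p$-improving) from the paper's proof, and unfortunately it has a gap that I do not see how to close as written. The problem is an exponent mismatch. Once you pair $A_r^{(1)}f_1$ against $A_{\sqrt{1-r^2}}^{(2)}f_2$ by H\"older in each fiber with target $L^{r_0}$, $r_0=\frac{2d+1}{4d}$, the exponents are completely forced: since $A^{(1)}$ acts only in $x$, the $y$-exponent for $A_r^{(1)}f_1$ is $\frac{2d+1}{2d}$, and $\frac{1}{r_0}=\frac{4d}{2d+1}$ forces the $y$-exponent for $A_{\sqrt{1-r^2}}^{(2)}f_2$ to be $\frac{2d+1}{2d}$ as well; symmetrically in $x$. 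Thus the fiberwise spherical convolution is only ever asked to map $L^{\frac{2d+1}{2d}}(\R^d)\to L^{\frac{2d+1}{2d}}(\R^d)$, which is the trivial self-map carrying no $r$-gain at all, so there is no exponential decay $2^{-k\epsilon}$ in the dyadic decomposition to feed the $\ell^{r_0}$ summation. Going the other way and trying to invoke genuine improving at the $(d-1)$-sphere endpoint $L^{\frac{d+1}{d}}\to L^{d+1}$ does not match the input $\frac{2d+1}{2d}$ either; and the stronger estimate $L^{\frac{2d+1}{2d}}\to L^{2d+1}$ that would be needed is \emph{false} for $\sigma_{d-1}$ on $\R^d$ when $d\ge2$ (the point $(\frac{2d}{2d+1},\frac{1}{2d+1})$ lies strictly outside the Littman--Strichartz triangle, whose relevant boundary point at $\frac1p=\frac{2d}{2d+1}$ is $\frac1q=\frac{d+1}{2d+1}$). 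In short: slicing throws away the full $(2d-1)$-dimensional curvature of $\mathbb S^{2d-1}$, replacing it by the strictly weaker ``product'' curvature of $\mathbb S^{d-1}\times\mathbb S^{d-1}$, and the proposition is genuinely an endpoint for the former, not the latter. The $d=1$ branch has an analogous issue: the dyadic circular decomposition as in the proof of \Cref{full} produces restricted weak-type bounds at $(2,2,1)$-type exponents for the maximal operator, and the H\"older-in-$\theta$ step forces at least one of the two local averages to be an $L^a$-average with $a\geq 2>\frac32$, which is not bounded on $L^{3/2}$, so the strong-type endpoint $L^{3/2}\times L^{3/2}\to L^{3/4}$ is out of reach this way.

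Your opening observation --- that $\frac{2d+1}{2d}$ is exactly the $L^p$-improving endpoint for the \emph{full} $\sigma_{2d-1}$ on $\R^{2d}$ --- is the right insight, and it is precisely what the paper uses, but via a different mechanism that you miss. The paper first invokes the localization trick of \cite{IPS} to reduce from target $L^{r_0}$ with $r_0<1$ to target $L^1$ (the kernel is compactly supported and $r_0$ lies on the H\"older line, so this is free and kills the quasi-Banach issue entirely). Then, in the $L^1$ norm, after the triangle inequality and Fubini, the change of variables $x\mapsto x-z_1$ makes $f_1$ land at the fixed point $(x,y)$ independent of the sphere variable, so the spherical integral collapses onto $f_2$ alone, producing $\int|f_1(x,y)|\,\bigl(\sigma_{2d-1}*|f_2|\bigr)(x,y)\,dxdy$. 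A single H\"older and the Littman--Strichartz bound $\sigma_{2d-1}:L^{\frac{2d+1}{2d}}(\R^{2d})\to L^{2d+1}(\R^{2d})$ then close the argument uniformly for all $d\ge1$, with no slicing, no mixed norms, and no dyadic summation. I would encourage you to look for this kind of change-of-variables collapse whenever you see a ``twisted'' bilinear structure where the two functions depend on disjoint sphere coordinates.
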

	\subsection*{Proof of \Cref{singleAvg}}
	Note that using the localisation trick, see  \cite[Proposition 4.1]{IPS}, it is enough	to prove $L^\frac{2d+1}{2d}(\R^{2d})\times L^\frac{2d+1}{2d}(\R^{2d})\to L^1(\R^{2d})$-boundedness of the operator $\mathfrak{A}_1$. Consider 
	\begin{align*}
		\|\mathfrak{A}_1(f_1,f_2)\|_1&=\int_{\R^{2d}}\Big|\int_{\mathbb S^{2d-1}}f_1(x+z_1,y)f_2(x,y+z_2)\;d\sigma(z_1,z_2)\Big|dxdy\\
		&\leq\int_{\mathbb S^{2d-1}}\int_{\R^{2d}}|f_1(x+z_1,y)f_2(x,y+z_2)|\;dxdyd\sigma(z_1,z_2)\\
		&=\int_{\mathbb S^{2d-1}}\int_{\R^{2d}}|f_1(x,y)f_2(x-z_1,y+z_2)|\;dxdyd\sigma(z_1,z_2)\\
		&=\int_{\R^{2d}}|f_1(x,y)|\int_{\mathbb S^{2d-1}}|f_2(x-z_1,y+z_2)|\;d\sigma(z_1,z_2)dxdy\\
		&\lesssim\|f_1\|_{\frac{2d+1}{2d}}\Big\|\int_{\mathbb S^{2d-1}}|f_2(\cdot-z_1,\cdot+z_2)|\;d\sigma(z_1,z_2)\Big\|_{2d+1}\\
		&\lesssim\|f_1\|_{\frac{2d+1}{2d}}\|f_2\|_{\frac{2d+1}{2d}}.
	\end{align*}
	\qed

	Next, we will employ a fiberwise Calder\'on-Zygmund decomposition of functions in $L^p-$spaces based on the decomposition obtained in \cite{Bernicot}. 
	\begin{lemma}\label{fiberwiseCZ}
		Let $1\leq p<\infty$ and $f\in L^p(\R^{2d})$. For $\alpha>0$ and fixed $y\in \R^d$, we apply the Calder\'on-Zygmund decomposition at the scale $\alpha$ and obtain $f(\cdot,y)=g(\cdot,y)+\sum\limits_{i}h_i(\cdot,y)$, where $h_i(\cdot,y)$ is supported on dyadic cubes $Q_{i,y}\subset\R^d$ and
		\begin{enumerate}[(a)]
			\item $\|g(\cdot,y)\|_{L^p(\R^d)}\leq\|f(\cdot,y)\|_{L^p(\R^d)}$ and $\|g(\cdot,y)\|_{L^\infty(\R^d)}\lesssim\alpha$.
			\item For each $i$, $\|h_i(\cdot,y)\|_{L^p(\R^d)}^p\lesssim\alpha|Q_{i,y}|$ and $\int_{Q_{i,y}}h_i(x,y)~dx=0$.
			\item $\sum\limits_i|Q_{i,y}|\lesssim\frac{\|f(\cdot,y)\|_{L^p(\R^d)}^p}{\alpha^p}$.
		\end{enumerate}
	\end{lemma}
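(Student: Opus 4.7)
\subsection*{Proof proposal for \Cref{fiberwiseCZ}}
The plan is to run the standard $L^p$-type Calder\'on--Zygmund stopping-time construction of Bernicot \cite{Bernicot} in the $x$-variable only, with $y$ playing the role of a parameter, and then check that all selections and resulting functions are jointly measurable in $(x,y)$.

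First, by Fubini, $f(\cdot,y)\in L^p(\R^d)$ for a.e.\ $y\in\R^d$. Fix such a $y$ and fix $\alpha>0$. Let $\mathcal{F}_y$ be the collection of maximal dyadic cubes $Q\subset\R^d$ for which
\[
\frac{1}{|Q|}\int_Q |f(x,y)|^p\,dx>\alpha^p.
\]
Such maximal cubes exist because $f(\cdot,y)\in L^p$ forces the average to go to $0$ on very large cubes. Enumerate $\mathcal{F}_y=\{Q_{i,y}\}$ and define
\[
h_i(x,y)=\left(f(x,y)-c_{i,y}\right)\chi_{Q_{i,y}}(x),\quad c_{i,y}=\frac{1}{|Q_{i,y}|}\int_{Q_{i,y}}f(x,y)\,dx,
\]
and $g(x,y)=f(x,y)-\sum_i h_i(x,y)$. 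Joint measurability in $(x,y)$ is automatic: for each fixed dyadic cube $Q$ the map $y\mapsto\frac{1}{|Q|}\int_Q|f(x,y)|^p\,dx$ is measurable, hence the event that $Q\in\mathcal{F}_y$ is a measurable subset of $y$.

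The three claimed properties are now routine from the stopping-time construction. Outside $\bigcup_i Q_{i,y}$, the Lebesgue differentiation theorem gives $|f(x,y)|\leq\alpha$ a.e., so $g(\cdot,y)$ is bounded by $\alpha$ there. On each $Q_{i,y}$, the parent $\widetilde{Q}_{i,y}$ was not selected, so $\frac{1}{|\widetilde{Q}_{i,y}|}\int_{\widetilde{Q}_{i,y}}|f|^p\leq\alpha^p$, and H\"older gives $|c_{i,y}|\lesssim\alpha$. Hence $\|g(\cdot,y)\|_\infty\lesssim\alpha$. Also $\|g(\cdot,y)\|_{L^p}^p\leq\|f(\cdot,y)\|_{L^p}^p$ follows from Jensen applied on each $Q_{i,y}$, which gives $|c_{i,y}|^p|Q_{i,y}|\leq\int_{Q_{i,y}}|f|^p$. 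For (b), the zero-mean property is immediate from the definition of $c_{i,y}$, while the bound $\|h_i(\cdot,y)\|_p^p\leq 2^p\int_{Q_{i,y}}|f|^p+2^p|c_{i,y}|^p|Q_{i,y}|\lesssim\int_{Q_{i,y}}|f|^p\lesssim\alpha^p|Q_{i,y}|$ uses again the parent-average bound $\int_{Q_{i,y}}|f|^p\leq\int_{\widetilde{Q}_{i,y}}|f|^p\leq 2^d\alpha^p|Q_{i,y}|$. Finally, (c) follows from $\alpha^p|Q_{i,y}|<\int_{Q_{i,y}}|f|^p$ and disjointness of the selected cubes, after summing in $i$.

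There is essentially no hard step here; the only thing to be a little careful about is the measurable dependence of the stopping cubes on $y$, which is handled cleanly because the stopping criterion is defined through countably many dyadic averages, each measurable in $y$. Everything else is the standard $L^p$ Calder\'on--Zygmund decomposition applied in the first factor.
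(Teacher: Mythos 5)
Your construction is exactly the one the paper intends: the paper offers no proof, merely pointing to Bernicot's $L^p$-adapted Calder\'on--Zygmund decomposition and applying it on each fiber $f(\cdot,y)$, and your stopping-time argument with the measurability remark is the standard way to carry this out. One small point worth flagging: your computation (correctly) yields $\|h_i(\cdot,y)\|_{L^p}^p \lesssim \alpha^p |Q_{i,y}|$, which is the dimensionally consistent bound and the one that the downstream argument in Section 4 actually uses (e.g.\ in deducing $\|h_2\|_p\lesssim\|f_2\|_p$ for the $(g_1,h_2)$ contribution); the printed part (b) reads $\alpha|Q_{i,y}|$, which appears to be a typographical slip in the paper rather than a difference in content.
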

	Now, we argue for the proof of \Cref{estimateGrowth}. We apply \Cref{fiberwiseCZ} to $f_1,f_2\in L^\frac{2d+1}{2d}$ in $x,y$ variables respectively at the scale $c_0\alpha^{\frac{1}{2}},$ for some $c_0>0$ to be determined later and obtain $f_l=g_l+h_l,\;l=1,2$, where
	\begin{itemize}
		\item $\|g_1(\cdot,y)\|_{L^\infty(\R^d)}\lesssim\alpha^\frac{1}{2}$ and $h_1=\sum\limits_{\beta_1}h_{1,\beta_1}(\cdot,y)$, where $h_{1,\beta_1}(\cdot,y)$ is supported on dyadic cubes $Q_{\beta_1,y}\subset\R^d$, $\|h_{1,\beta_1}(\cdot,y)\|_{L^{\frac{2d+1}{2d}}(\R^d)}^\frac{2d+1}{2d}\lesssim\alpha^\frac{1}{2}|Q_{\beta_1,y}|$, $\sum\limits_{\beta_1}|Q_{\beta_1,y}|\lesssim\frac{\|f_1(\cdot,y)\|_{L^{\frac{2d+1}{2d}}(\R^d)}^\frac{2d+1}{2d}}{\alpha^\frac{2d+1}{4d}}$, and $\int_{Q_{\beta_1,y}}h_{1,\beta_1}(x,y)~dx=0$.
		\item $\|g_2(x,\cdot)\|_{L^\infty(\R^d)}\lesssim\alpha^\frac{1}{2}$ and $h_2=\sum\limits_{\beta_2}h_{2,\beta_2}(x,\cdot)$, where $h_{2,\beta_2}(x,\cdot)$ is supported on dyadic cubes $Q_{\beta_2,x}\subset\R^d$, $\|h_{2,\beta_2}(x,\cdot)\|_{L^{\frac{2d+1}{2d}}(\R^d)}^\frac{2d+1}{2d}\lesssim\alpha^\frac{1}{2}|Q_{\beta_2,x}|$, $\sum\limits_{\beta_2}|Q_{\beta_2,x}|\lesssim\frac{\|f_2(x,\cdot)\|_{L^{\frac{2d+1}{2d}}(\R^d)}^\frac{2d+1}{2d}}{\alpha^\frac{2d+1}{4d}}$ and $\int_{Q_{\beta_2,x}}h_{2,\beta_2}(x,y)~dy=0$.
	\end{itemize}
	\subsection*{Contribution of $(g_1,g_2)$}
	Since $\|g_l\|_\infty\leq c_0\alpha^{\frac{1}{2}}$, we have $\|\sup\limits_{j\in\Z}|T_{j,k}(g_1,g_2)|\|_\infty\leq c_0^2\alpha$. We choose $c_0$ small so that $c_0^2\alpha\leq\frac{\alpha}{4}$.
	\subsection*{Contribution of $(g_1,h_2)$ and $(h_1,g_2)$}
	We note that $\|\Delta_{j+k_1}^{(1)}g_1\|_\infty\lesssim\alpha^\frac{1}{2}$ uniformly in $j$ and $k_1$. An argument similar to that in \Cref{HLmax}, we can see that $\|\sup\limits_{j\in\Z}|T_{j,k}(1,h_2)|\|_{\frac{2d+1}{2d}}\leq \|h_2\|_{\frac{2d+1}{2d}}$. Therefore
	\begin{align*}
		\left|\{(x,y)\in\R^d\times\R^d:\sup_{j\in\Z}|T_{j,k}(g_1,h_2)(x,y)|>\alpha\}\right|&\lesssim\frac{\|\sup\limits_{j\in\Z}|T_{j,k}(g_1,h_2)|\|_\frac{2d+1}{2d}^\frac{2d+1}{2d}}{\alpha^\frac{2d+1}{2d}}\\
		&\lesssim\frac{\alpha^\frac{2d+1}{4d}\|f_2\|_\frac{2d+1}{2d}^\frac{2d+1}{2d}}{\alpha^\frac{2d+1}{2d}}\\
		&\leq \frac{1}{\alpha^\frac{2d+1}{4d}}.
	\end{align*}
	Similarly, we can get the desired estimate for $(h_1,g_2)$ by interchanging the roles of the indices.
	
	\subsection*{Contribution of $(h_1,h_2)$} Let $Q^*_{\beta,y}$ be the concentric cube with $Q_{\beta,y}$ and measure $4^n|Q_{\beta_i,y}|$. Now, define
	\[E=\{(x,y):x\in\cup_{\beta_1}Q^*_{\beta_1,y}\}\cup\{(x,y):y\in\cup_{\beta_2}Q^*_{\beta_2,x}\}.\]
	We can see that
	\begin{align*}
		|E|&\leq\int_{\R^d}\sum_{\beta_1}|Q^*_{\beta_1,y}|dy+\int_{\R^d}\sum_{\beta_2}|Q^*_{\beta_2,x}|dx\\
		&\lesssim\int_{\R^d}\frac{\|f_1(\cdot,y)\|_{L^{\frac{2d+1}{2d}}(\R^d)}^\frac{2d+1}{2d}}{\alpha^\frac{2d+1}{4d}}dy+\int_{\R^d}\frac{\|f_2(x,\cdot)\|_{L^{\frac{2d+1}{2d}}(\R^d)}^\frac{2d+1}{2d}}{\alpha^\frac{2d+1}{4d}}dx\\
		&=\frac{\|f_1\|_{L^{\frac{2d+1}{2d}}(\R^{2d})}^\frac{2d+1}{2d}+\|f_2\|_{L^{\frac{2d+1}{2d}}(\R^{2d})}^\frac{2d+1}{2d}}{\alpha^\frac{2d+1}{4d}}\lesssim\frac{1}{\alpha^\frac{2d+1}{4d}}
	\end{align*}
	Thus, we need to show that
	\begin{equation}\label{weakbad}
		\int_{\R^{2d}\setminus E}\sup_{j\in\Z}\left|\mathfrak{A}_{2^j}(\Delta_{j+k_1}^{(1)}h_1,\Delta_{j+k_2}^{(2)}h_2)(x,y)\right|^\frac{2d+1}{4d}dxdy\lesssim|{k}|^2.
	\end{equation}
	We define $h_1^{i_1}=\sum\limits_{|Q_{\beta_1,y}|=2^{-i_1}}h_{1,\beta_1}$ and $h_2^{i_2}=\sum\limits_{|Q_{\beta_2,x}|=2^{-i_2}}h_{2,\beta_2}$, write
	\begin{align*}
		\sup_{j\in\Z}\left|\mathfrak{A}_{2^j}(\Delta_{j+k_1}^{(1)}h_1,\Delta_{j+k_2}^{(2)}h_2)(x,y)\right|^\frac{2d+1}{4d}&=\sup_{j\in\Z}\left|\sum_{i_1\in\Z}\sum_{i_2\in\Z}\mathfrak{A}_{2^j}(\Delta_{j+k_1}^{(1)}h_1^{i_1},\Delta_{j+k_2}^{(2)}h_2^{i_2})(x,y)\right|^\frac{2d+1}{4d}\\        &\leq\sum_{i_1\in\Z}\sum_{i_2\in\Z}\sum_{j\in\Z}\left|\mathfrak{A}_{2^j}(\Delta_{j+k_1}^{(1)}h_1^{i_1},\Delta_{j+k_2}^{(2)}h_2^{i_2})(x,y)\right|^\frac{2d+1}{4d}.
	\end{align*}
	We have the following lemma stating $L^p-$bounds for terms in sums above. 
	\begin{lemma}
		The following bound holds uniformly in $j,{k},i_1,i_2$,
		\[\int_{\R^{2d}\setminus E}\left|\mathfrak{A}_{2^j}(\Delta_{j+k_1}^{(1)}h_1^{i_1},\Delta_{j+k_2}^{(2)}h_2^{i_2})(x,y)\right|^\frac{2d+1}{4d}dxdy\lesssim\min_{l=1,2}\min\{2^{i_l-j},2^{\frac{j+k_l-i_l}{2}},1\}\prod_{l=1}^2\|h_l^{i_l}\|_{L^{\frac{2d+1}{2d}}(\R^{2d})}^\frac{2d+1}{4d}.\]
	\end{lemma}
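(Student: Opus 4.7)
Write $\mathcal I$ for the left-hand side and set $P := \prod_{l=1}^2 \|h_l^{i_l}\|_{L^{(2d+1)/(2d)}(\R^{2d})}^{(2d+1)/(4d)}$. The inequality will follow by proving three separate upper estimates: $\mathcal I \lesssim P$, and for each $l \in \{1,2\}$, $\mathcal I \lesssim 2^{i_l - j} P$ and $\mathcal I \lesssim 2^{(j+k_l-i_l)/2} P$. Taking the minimum of these five bounds then yields the lemma, with the three factors corresponding to distinct regimes comparing the cube scale $2^{-i_l}$ to the averaging scale $2^j$ and the Littlewood-Paley wavelength $2^{-(j+k_l)}$.

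The uniform bound $\mathcal I \lesssim P$ is immediate from \Cref{singleAvg}. Indeed, the change of variables $(x,y) \mapsto (2^j x, 2^j y)$ rescales $\mathfrak{A}_{2^j}$ to $\mathfrak{A}_1$, and the fiberwise Littlewood-Paley projections $\Delta_{j+k_l}^{(l)}$ are uniformly bounded on $L^{(2d+1)/(2d)}$ (being convolutions with Schwartz kernels in one variable), so \Cref{singleAvg} applies directly to $\Delta_{j+k_l}^{(l)} h_l^{i_l}$.

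The decay bounds use the finer structure of the atoms in $h_l^{i_l}$; by symmetry, fix $l = 1$. For the bound with factor $2^{i_1 - j}$, we exploit the mean-zero property $\int_{Q_{\beta_1, y}} h_{1,\beta_1}(x, y)\,dx = 0$ of each atom: writing $\mathfrak A_{2^j} \circ \Delta_{j+k_1}^{(1)}$ as a fiberwise convolution in $x$ against an effective kernel and subtracting the kernel's value at the center of each cube $Q_{\beta_1, y}$ yields a controllable difference, while the exclusion of enlarged cubes from $E$ ensures that the resulting tails are summable. For the bound with factor $2^{(j+k_1 - i_1)/2}$, we use the Fourier support of $\Delta_{j+k_1}^{(1)}$ at scale $2^{j+k_1}$ together with the cube localization of atoms at scale $2^{-i_1}$: Plancherel in the $x$-variable, paired with a Bernstein-type comparison of norms for band-limited functions localized on cubes of a given scale, produces the claimed gain, which is then coupled with the $L^2$-boundedness of the twisted average along the fiber.

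The main obstacle is the twisted coupling $|z_1|^2 + |z_2|^2 = 1$, which prevents $\mathfrak{A}_{2^j}$ from factoring as a tensor product of two linear spherical averages. To apply either cancellation or Plancherel in a single variable, one must integrate out the other sphere coordinate against the Schwartz kernel of the Littlewood-Paley projection in that variable, producing a scalar effective kernel with enough smoothness and Fourier-support control to carry out the scale analysis sketched above. Once each of the three bounds is in place, the minimum is taken termwise in $l$.
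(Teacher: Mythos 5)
Your overall structure matches the paper's: prove a uniform bound via \Cref{singleAvg}, prove two decay bounds in $l=1,2$, and take the minimum. The uniform bound via \Cref{singleAvg} and the boundedness of $\Delta_{j+k_l}^{(l)}$ on $L^{(2d+1)/2d}$ is correct and is exactly what the paper does. The $2^{i_l-j}$ bound via the exclusion of the enlarged cubes from $E$ is also workable (the paper actually does not need the mean-zero subtraction you add there; since $x$ lies at distance $\gtrsim 2^{-i_l}$ from all cubes $Q_{\beta_l,y}$ and the averaging radius is $\lesssim 2^{-j} < 2^{-i_l}$, one simply truncates the Littlewood-Paley kernel $\psi_{j+k_l}$ to the far tail $|x|>2^{-i_l}$, whose $L^1$ mass is $O(2^{-(j+k_l-i_l)N})$ by Schwartz decay, which is already stronger than $2^{i_l-j}$; the mean-zero step is harmless but superfluous).

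The genuine gap is in your derivation of the factor $2^{(j+k_l-i_l)/2}$. You propose ``Plancherel in the $x$-variable, paired with a Bernstein-type comparison of norms for band-limited functions localized on cubes.'' This does not produce any gain: a function supported on a cube of side $2^{-i_l}$ has Fourier transform spread over a dual scale, and neither Plancherel nor Bernstein distinguishes the annulus $|\xi|\sim 2^{j+k_l}$ in a way that yields a factor $2^{(j+k_l-i_l)/2}$. (Take a non-negative bump on a small cube; convolving with $\psi_{j+k_l}$ costs you nothing in $L^{(2d+1)/2d}$.) Moreover, moving through $L^2$ via Plancherel does not obviously reattach to the $L^{(2d+1)/2d}$ norm in which the lemma is stated. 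The essential mechanism, which your proposal omits for this bound, is the moment condition $\int_{Q_{\beta_l,y}} h_{l,\beta_l}(x,y)\,dx=0$: for $i_l > j+k_l$, cancellation against the smooth kernel of scale $2^{-(j+k_l)}$ gives $\|\Delta_{j+k_l}^{(l)}h_l^{i_l}\|_{L^1}\lesssim 2^{j+k_l-i_l}\|h_l^{i_l}\|_{L^1}$, which after interpolation with the trivial $L^\infty$ bound yields $\|\Delta_{j+k_l}^{(l)}h_l^{i_l}\|_{L^{(2d+1)/2d}}\lesssim 2^{(j+k_l-i_l)\cdot 2d/(2d+1)}\|h_l^{i_l}\|_{L^{(2d+1)/2d}}$, and raising to the power $(2d+1)/(4d)$ gives exactly $2^{(j+k_l-i_l)/2}$. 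In short, you have swapped the roles of the two mechanisms: the mean-zero condition is the engine behind the $2^{(j+k_l-i_l)/2}$ factor, not the $2^{i_l-j}$ factor.
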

	\begin{proof}
		Using \Cref{singleAvg}, we can get that
		\begin{equation}\label{badloc}
			\int_{\R^{2d}\setminus E}\left|\mathfrak{A}_{2^j}(\Delta_{j+k_1}^{(1)}h_1^{i_1},\Delta_{j+k_2}^{(2)}h_2^{i_2})(x,y)\right|^\frac{2d+1}{4d}dxdy\lesssim\prod_{l=1}^2\|\Delta_{j+k_l}^{(l)}h_l^{i_l}\|_{L^{\frac{2d+1}{2d}}(\R^{2d})}^\frac{2d+1}{4d}.
		\end{equation}
		First, note that the operator $\Delta_{j+k_l}$ is defined by convolution with an integrable function whose norm is independent of $l$. Thus, for all $1\leq p\leq\infty$, we have
		\begin{equation}\label{fiberLp}
			\|\Delta_{j+k_l}^{(l)}h_l^{i_l}\|_{L^{p}(\R^{2d})}\lesssim\|h_l^{i_l}\|_{L^{p}(\R^{2d})}.
		\end{equation}
		When $i_l>j+k_l$, using the moment condition $\int h_{l,\beta_l}(x,y)~dx=0$ and the fact that each $h_{l,\beta_l}$ is supported on an cube of length $2^{-i_l}$, we get
		\begin{equation}\label{fiberL1}
			\|\Delta_{j+k_l}^{(l)}h_l^{i_l}\|_{L^{1}(\R^{2d})}\lesssim2^{j+k_l-i_l}\|h_l^{i_l}\|_{L^{1}(\R^{2d})}.
		\end{equation}
		Interpolating estimates \eqref{fiberL1} and \eqref{fiberLp} for $p=\infty$, we get that
		\[\|\Delta_{j+k_l}^{(l)}h_l^{i_l}\|_{L^{\frac{2d+1}{2d}}(\R^{2d})}\lesssim2^{\frac{(j+k_l-i_l)2d}{2d+1}}\|h_l^{i_l}\|_{L^{\frac{2d+1}{2d}}(\R^{2d})}.\]
		
		Next, we consider the case when $i_1<j$ and $x\notin E$. Note that $x$ lies at a distance at least $2\cdot 2^{-i_1}$ from cubes $Q_{\beta_1,y}$ of measure $2^{-i_1}$. Since $i_1<j$, the distance of $x+2^{-j}z_1$ from cubes $Q_{\beta_1,y}$ is bigger than $2^{-i_1}$. Thus, we can write
		\[\mathfrak{A}_{2^{-j}}(\Delta_{j+k_1}^{(1)}h_1^{i_1},\Delta_{j+k_2}^{(2)}h_2^{i_2})(x,y)=\mathfrak{A}_{2^{-j}}(\widetilde{\psi}\ast^{(1)}h_1^{i_1},\Delta_{j+k_2}^{(2)}h_2^{i_2})(x,y),\]
		where $\widetilde{\psi}(x)=\psi_{j+k_1}(x)\chi_{|x|>2^{-i_1}}$ and satisfies the estimate for any $N$,
		\[\|\widetilde{\psi}\|_{L^1}=\|\psi_{j+k_1}\chi_{|x|>2^{-i_1}}\|_{L^1}\lesssim 2^{-(j+k_1-i_1)N}.\]
		Therefore, we obtain
		\begin{align*}
			\|\widetilde{\psi}\ast^{(1)}h_1^{i_1}\|_{L^{\frac{2d+1}{2d}}(\R^{2d})}&\leq \|\widetilde{\psi}\|_{L^1(\R^d)}\|h_1^{i_1}\|_{L^{\frac{2d+1}{2d}}(\R^{2d})}\\
			&\lesssim 2^{-(j+k_1-i_1)N}\|h_1^{i_1}\|_{L^{\frac{2d+1}{2d}}(\R^{2d})}.
		\end{align*}
		Similarly when $i_2<j$, interchanging roles of $i_1$ and $i_2$, we get the estimate
		\[\|\widetilde{\psi}\ast^{(1)}h_1^{i_1}\|_{L^{\frac{2d+1}{2d}}(\R^{2d})}\lesssim 2^{-(j+k_2-i_2)N}\|h_1^{i_1}\|_{L^{\frac{2d+1}{2d}}(\R^{2d})}.\]
	\end{proof}
	To obtain the estimate \eqref{weakbad}, we sum in $j$ and obtain
	\begin{align*}
		&\int_{\R^{2d}\setminus E}\sup_{j\in\Z}\left|\mathfrak{A}_{2^j}(\Delta_{j+k_1}^{(1)}h_1,\Delta_{j+k_2}^{(2)}h_2)(x,y)\right|^\frac{2d+1}{4d}dxdy\\
		\lesssim&\sum_{i_1\in\Z}\sum_{i_2\in\Z}|{k}|\min\{1,2^{\frac{-(|i_1-i_2|-|{k}|)}{2}}\}\|h_1^{i_1}\|^\frac{2d+1}{4d}_{L^{\frac{2d+1}{2d}}}\|h_2^{i_2}\|_{L^{\frac{2d+1}{2d}}}^\frac{2d+1}{4d}\\
		\leq&|{k}|\left(\sum_{i_1,i_2\in\Z}\min\{1,2^{\frac{-(|i_1-i_2|-|{k}|)}{2}}\}\|h_1^{i_1}\|^\frac{2d+1}{2d}_{L^{\frac{2d+1}{2d}}}\right)^{\frac{1}{2}}\left(\sum_{i_1,i_2\in\Z}\min\{1,2^{\frac{-(|i_1-i_2|-|{k}|)}{2}}\}\|h_2^{i_2}\|^\frac{2d+1}{2d}_{L^{\frac{2d+1}{2d}}}\right)^{\frac{1}{2}}\\
		\lesssim&|{k}|^2\left(\sum_{i_1\in\Z}\|h_1^{i_1}\|^\frac{2d+1}{2d}_{L^{\frac{2d+1}{2d}}}\right)^{\frac{1}{2}}\left(\sum_{i_2\in\Z}\|h_2^{i_2}\|^\frac{2d+1}{2d}_{L^{\frac{2d+1}{2d}}}\right)^{\frac{1}{2}}\\
		\leq&|{k}|^2.
	\end{align*}
	\qed

	\section{Trilinear Smoothing Estimate: Proof of \Cref{Trilinearsmoothing}} \label{sec:Trilinearsmoothing}
	We have included a brief diagram, see~\Cref{figure2}, describing a brief outline of proof of \Cref{Trilinearsmoothing} for reader's convenience.  
	\subsection{Localization of the operator to isolate the degeneracies of $\cos t$ and $\sin t$}\label{sec:localisolation}
	By the $2 \pi$-periodic parametrization 
	$\{(\cos t, \sin t): t \in[0,2 \pi)\}$ 
	of the unit sphere $\Sp^1$, it is enough to prove the local smoothing estimate for the operator
	\[
	T_0\left(f_1, f_2\right)(x, y)
	=
	\int f_1(x+\cos t, y) f_2(x, y+\sin t) \zeta_0(t) d t \text {, where }
	\]
	$\zeta_0$ is a smooth function on $\mathbb{R}$ supported in a compact neighbourhood of any one of points belonging to the critical set $\left\{0, \frac{\pi}{2}, \pi, \frac{3 \pi}{2}\right\}$ i.e. the roots of $\cos t$ and $\sin t$.
	
	\subsection{Localization of the operator in the space variables $(x, y)$}\label{sec:spacelocal}
	In order to prove the trilinear smoothing inequality, it is enough to prove the estimate for the local operator
	\begin{equation}\label{Defn:Tloc}
		T_{loc}\left(f_1, f_2\right)(x, y)=\int f_1(x+\cos t, y) f_2(x, y+\sin t) \zeta(x,y,t) d t,
	\end{equation}
	where $\zeta$ is a smooth compactly supported function on $\mathbb{R}^2 \times[0,2\pi)$. Without loss of generality, we can assume that $\zeta$ is supported in $\mathbb{R}^2 \times N(0)$, where $N(0)$ is an open neighbourhood of $t=0$.
	\begin{figure}[H]
		\begin{center}
			\begin{tikzpicture}[node distance=2cm]
				\node (trilinear) [startstop] {Trilinear Smoothing Inequality (\Cref{Trilinearsmoothing})};
				\node (isolation) [process, below of=trilinear] {Isolating the degeneracies of $\cos t$ and $\sin t$ (\Cref{sec:localisolation})};
				\node (spatial) [process, below of=isolation] {Spatial decomposition of the operator (\Cref{sec:spacelocal})};
				\node (inftyinfty1) [process, below of=spatial] {Reduction to an $(\infty,\infty,1)$-estimate (\Cref{sec:inftyinfty1})};
				\node (degeneracy) [process, below of=inftyinfty1] {Decomposition of the operator based on the degeneracy of the curve (\Cref{sec:degeneracy})};
				\node (pruning) [process, below of=degeneracy] {Frequency pruning and decomposition of the functions (\Cref{sec:pruning})};
				\node (smallmd) [process, below right of=pruning, xshift=-5cm, yshift=-1cm] {Terms with small multiplicative derivative (\Cref{lemma:oscillatory})};
				\node (sublevel) [process, below left of=pruning, xshift=5cm, yshift=-2.2cm] {Sublevel set estimates (\Cref{Sublevelsetestimate})};
				
				\draw [arrow] (trilinear) -- (isolation);
				\draw [arrow] (isolation) -- (spatial);
				\draw [arrow] (spatial) -- (inftyinfty1);
				\draw [arrow] (inftyinfty1) -- (degeneracy);
				\draw [arrow] (degeneracy) -- (pruning);
				\draw [arrow] (pruning) -- (smallmd);
				\draw [arrow] (pruning) -- node[anchor=west] {Reduction via \Cref{lemma:reductiontosublevel}} (sublevel);
			\end{tikzpicture}
		\end{center}
		\caption{\label{figure2} Sketch of proof of \Cref{Trilinearsmoothing}}
	\end{figure} 
	
	We now justify the above reduction. Let $\eta$ be a smooth non-negative function on $\mathbb{R}^2$ such that it is supported in $\left[-\frac{3}{4}, \frac{3}{4}\right]^2$
	and $\sum_{m\in\Z^2} \eta_m(x, y)=1$, where
	$\eta_m(x, y)=\eta((x, y)-m)$. We also consider a function $\widetilde{\eta}$ which is supported in $[-2,2]^2$ and takes value one on $[-1,1]^2$. Observe that
	\begin{align*}
		f_1(x,y)&=f_1 *_1 \widetilde{\psi}_\lambda(x,y)=\int_{\R}f_1(x-z,y)\widetilde{\psi}_\lambda(z)\;dz \quad \text{and}\\
		f_2(x,y)&=f_2 *_2 \widetilde{\phi}_\lambda(x,y)=\int_{\R}f_2(x,y-z)\widetilde{\phi}_\lambda(z)\;dz,
	\end{align*}
	where $\widehat{\widetilde{\psi}}_\lambda$ is a smooth function supported in $\left\{\frac{\lambda}{2} \leq|\xi_1| \leq 4 \lambda\right\}$ and $\widehat{\widetilde{\psi}}_\lambda=1$ on $\{\lambda \leq|\xi_1| \leq 2 \lambda\}$ and $\widetilde{\phi}_\lambda$ is a smooth function supported in $\left\{|\xi_2| \leq 4 \lambda\right\}$ and $\widehat{\widetilde{\phi}}_\lambda=1$ on $\{|\xi_2| \leq 2 \lambda\}$.
	We have,
	\[
	\begin{aligned}
		& \left\|T_0\left(f_1 *_1 \psi_\lambda, f_2 *_2 \phi_\lambda\right)\right\|_1 \\
		&\leq  \sum_{m \in \mathbb{Z}^2}\left\|T_{loc}\left(\widetilde{\eta}_m\left(f_1 *_1 \widetilde{\psi}_\lambda\right), \widetilde{\eta}_m\left(f_2 *_2 \widetilde{\phi}_\lambda\right)\right) \eta_m\right\|_1 \\
		&\leq  \sum_{m \in \Z^2} \left\| T_{loc}\left(\left(f_1 \widetilde{\eta}_m\right) *_1 \widetilde{\psi}_\lambda,\left(f_2 \widetilde{\eta}_m\right) *_2 \widetilde{\phi}_\lambda\right) \right\|_1 \\
		& +\sum_{m \in \Z^2}\left\|T_{loc}\left(\widetilde{\eta}_m\left(f_1 *_1 \widetilde{\psi}_\lambda\right)-\left(\widetilde{\eta}_m f_1\right) *_1 \widetilde{\psi}_\lambda, \widetilde{\eta}_m\left(f_2 *_2 \widetilde{\phi}_\lambda\right)\right) \eta_m\right\|_1 \\
		&+  \sum_{m \in \Z^2}\left\|T_{loc}\left(\widetilde{\eta}_m\left(f_1 *_1 \widetilde{\psi}_\lambda\right), \widetilde{\eta}_m\left(f_2 *_2 \widetilde{\phi}_\lambda\right)-\left(\widetilde{\eta}_m f_2\right) *_2 \widetilde{\phi}_\lambda\right) \eta_m\right\|_1 \\
		&= I_1+I_2+I_3.
	\end{aligned}
	\]
	
	The terms in $I_1$ are given by operators of the form \eqref{Defn:Tloc}, where $\zeta(x, y, t)=\widetilde{\eta}_m(x+\cos t, y) \widetilde{\eta}_m(x, y+\sin t) \zeta_0(t)$. Hence, the $L^2 \times L^2 \rightarrow L^{1}$-estimate for the local operator $T_{loc}$ and use of Cauchy-Schwartz inequality with the fact that $\sum_{m \in \Z^2} \widetilde{\eta}_m^2 \lesssim 1$ lead us to the desired estimate for the term $I_1$.
	
	The estimate for the commutator terms $I_2$ and $I_3$ are proved in a similar manner. Therefore, we indicate the proof for the term $I_2$ only. We observe, by mean value theorem, that
	\[
	\begin{aligned}
		& \left|\left(\widetilde{\eta}_m \left(f_1 *_1 \widetilde{\psi}_\lambda\right)-\left(\widetilde{\eta}_m f_1\right) *_1\widetilde{\psi}_1\right)(x, y)\right| \\
		\lesssim & \int\left|\widetilde{\eta}_m(x, y)-\widetilde{\eta}_m(u, y)\right|\left|f_1(u, y)\right|\left|\widetilde{\psi}_\lambda(x-u)\right| d u \\
		\approx & \int \frac{\lambda|x-u|}{(1+\lambda|x-u|)^3} \quad\left|f_1(u, y)\right| d u \\
		\leq & \lambda^{-1}\left|f_1\right| *_1 \Psi_\lambda,
	\end{aligned}
	\]
	
	where $\Psi_\lambda(y)=\lambda(1+\lambda|y|)^2$. Therefore it follows that
	\[
	\begin{aligned}
		I_2 &\lesssim \lambda^{-1} \int_{\mathbb{R}^2} \int_{\mathbb{R}}\big(\left|f_1\right| *_1 \Psi_\lambda\big)(x+\cos t, y) \big(\left|f_2\right|*_2\widetilde{\phi}_\lambda\big)(x, y+\sin t)  \sum_m\left|\eta_m(x, y)\right| \zeta_0(t)\;dtdxdy \\
		& \lesssim \lambda^{-1} \int_{\mathbb{R}}\big\|\left|f_1\right| *_1 \Psi_\lambda\big\|_2 \big\|\left|f_2\right| *_2 \widetilde{\phi}_\lambda\big\|_2 \ \zeta_0(t) \;dt \\
		& \lesssim \lambda^{-1}\left\|f_1\right\|_2 \left\|f_2\right\|_2.
	\end{aligned}
	\]
	
	\subsection{Reduction of the $(2,2,1)$-estimate to an $(\infty, \infty, 1)$-estimate}\label{sec:inftyinfty1}
	We reduce the $(2,2,1)$-estimate to proving an $(\infty, \infty, 1)$-estimate by using the $L^{\frac{3}{2}} \to L^3$ improving property for the linear spherical average. Indeed, we have
	\[
	\begin{aligned}
		& \left\|T_{loc}\left(f_{1}, f_2\right)\right\|_1 \\
		\approx & \int_{\mathbb{R}^2}\left|f_1(x, y)\right| \int_0^{2 \pi}\left|f_2(x-\cos t,y+\sin t)\right| d t d x d y \\
		\leq & \left\|f_1\right\|_{\frac{3}{2}}\left\|\int_0^{2 \pi} \mid f_2((x, y)-(\cos t, \sin t))|\;dt\right\|_3 \\
		\approx & \left\|f_1\right\|_{\frac{3}{2}} \left\|f_2\right\|_{\frac{3}{2}} .
	\end{aligned}
	\]
	Thus by Sobolev interpolation and the previous estimate, it remains to prove
	\[\left\|T_{loc}\left(f_1 f_2\right)\right\|_1 \lesssim \lambda^{-\epsilon}\left\|f_1\right\|_{\infty}\left\|f_2\right\|_{\infty},\]
	whenever $\operatorname{supp}\hat{f}_1\subset\{\lambda \leq |\xi_1|\leq 2 \lambda\}$ and $\operatorname{supp} \hat{f}_2 \subset\left\{\left|\xi_2\right| \leq 2 \lambda\right\}.$
	
	\subsection{Spatial decomposition of the functions}
	We now introduce a structural decomposition of the function based on the frequency parameter $\lambda$. We redefine $\eta_m(x, y)=\eta\left(\lambda^\gamma(x, y)-m\right)$ for some $\gamma\in\left(\frac{1}{2}, 1\right)$ to be chosen later. Also, we denote $\widetilde{\eta}_m$ as the smooth function taking value one on the support of $\eta_m$ and supported in the cube $Q_{m}$ of side length $2\lambda^{-\gamma}$ centered at the point $\lambda^{-\gamma}m\in\R^2$. For $l=1,2$, we write
	\[
	\begin{aligned}
		f_l & =\sum_{m \in \Z^2} \widetilde{\eta}_m \eta_m f_l \\
		& =\sum_{m \in \Z^2} \widetilde{\eta}_m \widetilde{\psi}_\lambda *_l\left(\eta_m f_l\right)+\sum_{m \in \Z^2}\widetilde{\eta}_m\left(\eta_m\left(\widetilde{\psi}_\lambda *_l f_l\right)-\widetilde{\psi}_\lambda *_l\left(\eta_m f_l\right)\right).
	\end{aligned}
	\]
	
	We set $f_{l,m}=\widetilde{\psi}_\lambda *_l\left(\eta_m f_l\right)$ and $f_{l, m, err}= \eta_m\left(\widetilde{\psi}_\lambda *_l f_l\right)-\widetilde{\psi}_\lambda *_l\left(\eta_m f_l\right)$.
	As in \Cref{sec:spacelocal}, we have that $\left|f_{l, m, err}\right| \lesssim \lambda^{\gamma-1} \|f_l\|_{\infty}$ by mean value theorem. We write our operator as
	\[
	\begin{aligned}
		& \left\|T_{loc}\left(f_1, f_2\right)\right\|_1 \\
		& \leq\left\|T_{loc}\left(\sum_m \widetilde{\eta}_m f_{1, m}, \sum_m \widetilde{\eta}_{m} f_{2, m}\right)\right\|_1 \\
		& \quad+\left\|T_{loc}\left(\sum_m \widetilde{\eta}_m f_{1, m}, \sum_m \widetilde{\eta}_m f_{2, m, err}\right)\right\|_1 \\
		& \quad+\left\|T_{loc}\left(\sum_m \widetilde{\eta}_m f_{1, m, err}, \sum_m \widetilde{\eta}_m f_{2, m}\right)\right\|_1 \\
		& \quad+\left\|T_{loc}\left(\sum_m \widetilde{\eta}_m f_{1, m, err}, \sum_m \widetilde{\eta}_m f_{2, m, err}\right)\right\|_1.
	\end{aligned}
	\]
	
	The last three terms in the expression above are bounded by a constant multiple of $\lambda^{\gamma-1}\left\|f_1\right\|_{\infty}\left\|f_2\right\|_{\infty}$. Therefore, it remains to estimate the first term.
	\subsection{Decomposition of the operator based on the degeneracy at $t=0$}\label{sec:degeneracy} Let $0<\tau<\gamma-\frac{1}{2}$ be a constant to be determined later. We decompose the operator as
	\[
	\begin{aligned}
		& T_{loc}(f_1,f_2)=\sum_{\ell=0}^{\left\lceil\log_2\left(\frac{\pi\lambda^\tau}{6}\right)\right\rceil} T_{loc}^\ell(f_1,f_2) \text {, where } \\
		& T_{loc}^0(f_1,f_2)(x, y)=\int_0^{\lambda^{-\tau}} f_1(x+\cos t, y) g(x, y+\sin t) \zeta(x,y,t) d t~ \text {and } \\
		& T_{loc}^\ell(f_1,f_2)(x, y)=\int_{2^{\ell} \lambda^{-\tau}}^{2^{\ell+1} \lambda^{-\tau}} f_1(x+\cos t, y) f_2(x, y+\sin t) \zeta(x,y,t)\; d t,\; \ell=1,\dots, \left\lceil\log_2\left(\frac{\pi\lambda^\tau}{6}\right)\right\rceil.
	\end{aligned}
	\]
	
	We note that $\left\|T_{loc}^0\left(f_1, f_2\right)\right\|_1 \lesssim \lambda^{-\tau}\left\|f_1\right\|_\infty\left\|f_2\right\|_{\infty}$. Therefore the problem boils down to establishing $(\infty,\infty,1)-$bound for the operator:
	\[
	T_{loc}^\delta\left(f_1, f_2\right)(x, y)=\int_\delta^{2 \delta} f_1(x+\cos t, y) f_2(x, y+\sin t) \zeta(x,y,t) d t \text {, for } \lambda^{-\tau}\leq\delta<1.
	\]
	More precisely, we will establish the following estimate for $0<\delta<1$.
	\begin{equation}\label{desiredbound}
		\|T_{loc}^\delta(f_1,f_2)\|_1\lesssim\delta^{-1}\lambda^{-\mathfrak{c}}\|f_1\|_\infty\|f_2\|_\infty, \text{ for some constant }\mathfrak{c}>0.
	\end{equation}
	Indeed, suppose the inequality \eqref{desiredbound} is true for all $0<\delta<1$, then we have that
	\begin{align*}
		\|T_{loc}^\delta(f_1,f_2)\|_1\lesssim\sum\limits_{\ell=0}^{\left\lceil\log_2\left(\frac{\pi\lambda^\tau}{6}\right)\right\rceil}(2^\ell\lambda^{-\tau})^{-1}\lambda^{-\mathfrak{c}}\|f_1\|_\infty\|f_2\|_\infty\lesssim\lambda^{-\mathfrak{c}+\tau}\|f_1\|_\infty\|f_2\|_\infty.
	\end{align*} 
	Note that the desired estimate follows by choosing $\tau<\mathfrak{c}$. Therefore, we need to prove inequality \eqref{desiredbound}. In order to prove this inequality, we require a structural decomposition of the functions based on a frequency pruning lemma obtained in \cite{CDR}, which is the content of the next section.
	\subsection{Frequency Pruning and a structural decomposition of the functions.}\label{sec:pruning} For $s\in\R$, we define the multiplicative derivative by
	\[
	\begin{aligned}
		\mathscr{D}_s f(x)&=f(x+s) \overline{f(x)} \text {, for } f:\R\to\R,\\
		\mathscr{D}^{(1)}_s f(x,y)&=f(x+s,y) \overline{f(x,y)} \text {, for } f:\R^2\to\R,\\
		\mathscr{D}^{(2)}_s f(x,y)&=f(x,y+s) \overline{f(x,y)} \text {, for } f:\R^2\to\R.
	\end{aligned}
	\]
	
	We decompose the functions into two pieces, one having a small $L^2$-norm of the multiplicative derivative and the other having a controlled Fourier support for each fiber. For this purpose we state the frequency pruning lemma from \cite{CDR},
	\begin{lemma}[\cite{CDR} Lemma 3.2]\label{multiplicativelemma}
		Let $f \in L^2(\mathbb{R})$, $R \geq 1$ and $\varrho \in(0,1)$. There exists a decomposition
		\[
		f=f_{\sharp}+f_\flat,
		\]
		such that the following conditions hold true:
		\begin{enumerate}[(i)]
			\item We have the norm control,
			\[
			\left\|f_{\sharp}\right\|_{2}+\left\|f_\flat\right\|_{2} \lesssim\|f\|_{L^2}.
			\]
			\item The function $f_{\sharp}$ is given by
			\[f_{\sharp}(x)=\sum_{n=1}^{\mathcal{N}_\varrho} h_n(x) e^{i \alpha_n x}\]
			such that each $\alpha_n \in \mathbb{R}$ and $h_n$ is a smooth function satisfying
			\begin{enumerate}[(a)]
				\item $\left\|\partial^N h_n\right\|_{\infty} \lesssim_N R^N\|f\|_{\infty}$ for all integers $N \geq 0$.
				\item $\left\|h_n\right\|_{2} \lesssim\|f\|_{2}$.
				\item $\widehat h_n$ is supported in $[-R, R]$. Moreover, the support of $\widehat{f}_{\sharp}$ is contained in the support of $\widehat{f}$.
				\item $\mathcal{N}_\varrho\lesssim \varrho^{-1}$.
			\end{enumerate}
			\item One has the bound
			\[
			\int_{\mathbb{R}} \int_{|\xi| \leq R}\left|\widehat{\mathscr{D}_s f_\flat}(\xi)\right|^2 d \xi d s \lesssim \varrho\|f\|_{2}^4 .
			\]	
		\end{enumerate}
	\end{lemma}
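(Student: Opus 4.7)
I would prove this by a frequency-localized greedy (energy-increment) algorithm. The starting point is to rewrite the quantity controlled by (iii) via Plancherel in the shift variable $s$. A direct computation (expanding the square, swapping the $s$-integral inside, and using Plancherel on $\int f(u)\overline{f(u-t)}\,du$) yields
\[
\int_{\R}\int_{|\xi|\leq R}\bigl|\widehat{\mathscr{D}_s f}(\xi)\bigr|^2\,d\xi\,ds \;=\; \frac{1}{2\pi}\int_{|\xi|\leq R}\int_{\R}|\hat f(\eta)|^2|\hat f(\eta-\xi)|^2\,d\eta\,d\xi.
\]
Partitioning $\R$ into pairwise disjoint intervals $J_j$ of length $R$ and observing that, for $\eta \in J_j$ and $|\xi|\leq R$, the point $\eta-\xi$ lies in $J_{j-1}\cup J_j\cup J_{j+1}$, the right hand side is bounded above by $3\|f\|_2^2\max_j\int_{J_j}|\hat f|^2$. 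Consequently, whenever this quantity exceeds $\varrho\|f\|_2^4$, some interval $J_{j^\ast}$ satisfies $\int_{J_{j^\ast}}|\hat f|^2\gtrsim \varrho\|f\|_2^2$.

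Next, I would iterate. Let $f^{(0)}=f$; at stage $k$, if the autocorrelation above for $f^{(k)}$ is $\leq \varrho\|f\|_2^4$, stop and set $f_\flat=f^{(k)}$. Otherwise, apply the pigeonhole step to obtain $\alpha_k$ such that $\int_{[\alpha_k-R/2,\alpha_k+R/2]}|\widehat{f^{(k)}}|^2\gtrsim \varrho\|f\|_2^2$. Fix a smooth cutoff $\chi$ with $\mathrm{supp}\,\chi\subset[-1,1]$ and $\chi\equiv 1$ on $[-1/2,1/2]$, define $\hat h_k(\xi)=\chi(\xi/R)\widehat{f^{(k)}}(\xi+\alpha_k)$, and set $f^{(k+1)}=f^{(k)}-h_k e^{i\alpha_k\cdot}$. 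Since $\chi\equiv 1$ on the interval of maximizing mass, a Plancherel computation yields the norm decrement
\[
\|f^{(k)}\|_2^2-\|f^{(k+1)}\|_2^2 \;\gtrsim\; \int_{[\alpha_k-R/2,\alpha_k+R/2]}|\widehat{f^{(k)}}|^2 \;\gtrsim\; \varrho\|f\|_2^2,
\]
so the algorithm terminates in $\mathcal{N}_\varrho\lesssim \varrho^{-1}$ steps and produces $f_\sharp=\sum_{n<\mathcal{N}_\varrho} h_n e^{i\alpha_n\cdot}$. The inclusion $\mathrm{supp}\,\hat f_\sharp\subset\mathrm{supp}\,\hat f$ is immediate from the recursion $\widehat{f^{(k+1)}}(\xi)=\widehat{f^{(k)}}(\xi)(1-\chi((\xi-\alpha_k)/R))$.

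Finally, I would verify the quantitative properties of each $h_n$. Since $\hat h_n$ is supported in $[-R,R]$, Bernstein's inequality reduces $\|\partial^N h_n\|_\infty\lesssim_N R^N\|f\|_\infty$ to the bound $\|h_n\|_\infty\lesssim\|f\|_\infty$, and $\|h_n\|_2\leq\|f^{(n-1)}\|_2\leq\|f\|_2$ follows from Plancherel and $|\chi|\leq 1$. The pointwise bound is the subtle step: writing $\widehat{f^{(n-1)}}(\xi)=\hat f(\xi)\prod_{l<n-1}(1-\chi((\xi-\alpha_l)/R))$, the greedy selection forces the $\alpha_l$ to be centres of disjoint length-$R$ intervals, so the cutoffs $\chi((\cdot-\alpha_l)/R)$ have essentially disjoint frequency supports; the composite multiplier is then the Fourier transform of a convolution kernel of $L^1$-norm bounded uniformly in $n$, giving $\|f^{(n-1)}\|_\infty\lesssim\|f\|_\infty$ and hence $\|h_n\|_\infty\lesssim\|f\|_\infty$. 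The main obstacle I anticipate is precisely this reconciliation of \emph{smooth} cutoffs (needed for the Bernstein-based pointwise control of $h_n$ and its derivatives) with the \emph{sharp orthogonality} used to count $\mathcal{N}_\varrho$; the resolution is that the greedy rule makes the extracted frequencies automatically well-separated, so that smooth cutoffs behave like sharp projections up to absolute constants.
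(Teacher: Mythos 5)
The paper does not prove this lemma; it is imported verbatim from Christ--Durcik--Roos, so there is no internal argument to compare against. Your reconstruction is a correct energy-increment / greedy frequency-extraction proof and, as far as the mechanism goes, it is essentially the route taken in the cited source. The one loose spot is the final claim that the greedy rule makes the $\alpha_l$ ``well-separated'' and the cutoffs $\chi((\cdot-\alpha_l)/R)$ ``essentially disjoint.'' What is actually guaranteed is weaker: once $(1-\chi((\cdot-\alpha_l)/R))$ has been inserted, $\widehat{f^{(k)}}$ vanishes on $[\alpha_l-R/2,\alpha_l+R/2]$ for all $k>l$, so no window can be reselected, hence the $\alpha_l$ are \emph{distinct} points of a fixed $R$-spaced lattice; adjacent choices (spacing $R$ or $2R$) are not excluded, so the cutoff supports genuinely overlap. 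What actually closes the argument is bounded overlap: at most $O(1)$ of the factors $(1-\chi((\xi+\alpha_n-\alpha_l)/R))$ are nontrivial on $\{|\xi|\le R\}$, so the symbol
\[
\widehat{K_n}(\xi)=\chi(\xi/R)\prod_{l<n}\bigl(1-\chi((\xi+\alpha_n-\alpha_l)/R)\bigr)
\]
has all derivatives $O_N(R^{-N})$ on a support of length $2R$, giving $\|\partial^N K_n\|_{L^1}\lesssim_N R^N$ uniformly in $n$, and hence $\|\partial^N h_n\|_\infty=\|(\partial^N K_n)\ast(f e^{-i\alpha_n\cdot})\|_\infty\lesssim_N R^N\|f\|_\infty$. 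With that point tightened, items (i)--(iii) follow as you describe.
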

	
	We apply the lemma as above to the maps $x \mapsto f_{1, m}(x, y)$ and $y \mapsto f_{2, m}(x, y)$ with $R=\delta\lambda^{\gamma+\mu_1}$ and $\varrho=\lambda^{-\mu_2}$ for some $\mu_1,\mu_2>0$ to obtain,
	\[
	\begin{aligned}
		f_{l, m} & =f_{l, m, \sharp}+f_{l, m, \flat}, \quad l=1,2 . \\
		f_{1, m, \sharp}(x, y) & =\sum_{n=1}^{\mathcal N_1} h_{1, m, n}(x, y) e^{i \alpha_{m, n}(y) x}, \\
		f_{2, m, \sharp}(x, y) & =\sum_{n=1}^{\mathcal N_2} h_{2, m, n}(x, y) e^{i \beta_{m, n}(x) y},
	\end{aligned}
	\]
	where $\left|\mathcal{N}_1\right|+\left|\mathcal{N}_2\right| \lesssim \lambda^{\mu_2}$, and $\alpha_{m,n}$ and $\beta_{m,n}$ are measurable functions with $\left|\alpha_{m, n}\right| \sim \lambda, \quad\left|\beta_{m, n}\right| \lesssim \lambda$.  Moreover, $h_{l, m, n}$ are smooth functions with $\left|\partial_l^N h_{l, m, n}\right| \lesssim (\delta\lambda^{\gamma+\mu_1})^N\left\|f_{l,m}\right\|_{\infty}$ uniformly in $m$ ad $n$. We have following control over the $L^2$-norm of the multiplicative derivative of $f_{l, m, \flat}$,
	\[
	\int_{\mathbb{R}}\int_{\left|\xi_2\right| \leq R}\left|\widehat{\mathscr{D}_s^{(l)} f_{l, m, \flat}}(\xi)\right|^2 d \xi d s \lesssim \lambda^{-\mu_2-3 \gamma}\left\|f_{l,m}\right\|_{\infty}^4.
	\]
	To see this, using \Cref{multiplicativelemma}(3) for $l=1$, we have
	\begin{align*}
		\int_{\mathbb{R}}\int_{\left|\xi_1\right| \leq R}\left|\widehat{\mathscr{D}_s^{(1)} f_{1, m, \flat}}(\xi)\right|^2 d \xi d s&\lesssim\lambda^{-\mu_2}\int\|\mathcal{F}_yf_{1,m,\flat}\|_{L^2_x}^4dy\\
		&\leq\lambda^{-\mu_2}\Big\|\|\mathcal{F}_yf_{1,m,\flat}\|_{L^4_y}\Big\|_{L^2_x}^4\\
		&=\lambda^{-\mu_2}\Big\|\|\mathcal{F}_yf_{1,m,\flat}\cdot\mathcal{F}_yf_{1,m,\flat}\|_{L^2_y}^{\frac{1}{2}}\Big\|_{L^2_x}^4\\
		&=\lambda^{-\mu_2}\Big\|\|\mathcal{F}_y(f_{1,m,\flat}\ast_{(2)} f_{1,m,\flat})\|_{L^2_y}^{\frac{1}{2}}\Big\|_{L^2_x}^4\\
		&=\lambda^{-\mu_2}\Big\|\|f_{1,m,\flat}\ast_{(2)} f_{1,m,\flat}\|_{L^2_y}^{\frac{1}{2}}\Big\|_{L^2_x}^4\\
		&\lesssim\lambda^{-\mu_2-3 \gamma}\left\|f_{1,m}\right\|_{\infty}^4,
	\end{align*}
	where we have used Plancherel's theorem in the second to last equality and support condition of $f_{1,m}$ in the last inequality. Similarly, we can obtain the estimate for $l=2$.
	
	\subsection{Conclusion of the proof of \Cref{Trilinearsmoothing}}Using the above frequency decomposition, we write,
	\[
	\begin{aligned}
		T_{loc}^{\delta}\left(\sum_m \tilde{\eta}_m f_{1, m}, \sum_m \tilde{\eta}_m f_{2, m}\right)&= T_{loc}^{\delta}\left(f_{1, \flat}, f_{2, \flat}\right)+T_{loc}^{\delta}\left(f_{1, \flat}, f_{2, \sharp}\right) +T_{loc}^\delta\left(f_{1, \sharp}, f_{2, \flat}\right)\\
		&+T_{loc}^{\delta}\left(f_{1, \sharp}, f_{2, \sharp}\right),
	\end{aligned}
	\]
	where $f_{l,b}=\sum\tilde{\eta}_mf_{l,m,b}$, and $f_{l,\sharp}=\sum\tilde{\eta}_mf_{l,m,\sharp},\;l=1,2$.
	
	The estimate for the first three terms involving $f_{l,\flat},\;l=1,2$ requires a delicate analysis based on oscillatory phase considerations. The decay estimates for those terms follow at once from the following lemma. We prove the lemma in \Cref{sec:Oscillatory}.
	\begin{lemma}\label{lemma:oscillatory}
		Let $\lambda,\delta,\mu_1,\mu_2>0$ and $\frac{1}{2}<\gamma<1$. Consider the sequences of functions $\{f_{l,m_l},\;m_l\in\Z^2\},\;l=1,2$ such that
		\begin{enumerate}[(i)]
			\item $f_{l,m_l}$ is supported in a cube $Q_{m_l}\subset\R^2$ of side length $4\lambda^{-\gamma}$ and centered at $\lambda^{-\gamma}m_l$ for $l=1,2$.
			\item $\|\partial_l^\alpha f_{l,m_l}\|_\infty\lesssim\lambda^{|\alpha|}\|f_{l,m_l}\|_\infty$, for $l=1,2,$ and all $\alpha\in\N\cup\{0\}.$
			\item For $l=1$ or $l=2$, we have
			\begin{equation}\label{smallMD}
				\int\limits_{\mathbb{R}}\int\limits_{\substack{\xi\in\R^2:\\\left|\xi_l\right| \leq \delta\lambda^{\gamma+\mu_1}}}\left|\widehat{\mathscr{D}_s^{(l)} f_{l, m_l}}(\xi)\right|^2 d \xi d s \lesssim \lambda^{-\mu_2-3\gamma}\left\|f_{l,m_l}\right\|_{\infty}^4.
			\end{equation}
		\end{enumerate}
		Then there exists constant $\mathfrak{c}_1$ (depending on $\mu_1,\mu_2,\gamma$) such that the following holds true,
		\[
		\left\|T^\delta_{loc}\left(\sum_{m_1}f_{1,m_1},\sum_{m_2}f_{2,m_2}\right)\right\|_1\lesssim \delta^{-1}\lambda^{\mathfrak{c}_1}\sup_{m_1\in\Z^2}\|f_{1,m_1}\|_\infty \sup_{m_2\in\Z^2}\|f_{2,m_2}\|_\infty.
		\]
	\end{lemma}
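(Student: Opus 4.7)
Without loss of generality I assume that hypothesis (iii) holds with $l=1$; the case $l=2$ is symmetric. My plan is to decompose $T^\delta_{loc}(F_1,F_2)=\sum_{m_1,m_2}T^\delta_{loc}(f_{1,m_1},f_{2,m_2})$ by the triangle inequality, reduce each summand from $L^1$ to $L^2$ using compact support, and then estimate $\|T^\delta_{loc}(f_{1,m_1},f_{2,m_2})\|_2^2$ via a standard expansion that introduces the multiplicative derivatives $\mathscr{D}^{(1)}_{s_1}f_{1,m_1}$ and $\mathscr{D}^{(2)}_{s_2}f_{2,m_2}$.

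Concretely, each summand is supported in the union, for $t\in[\delta,2\delta]$, of the sets $(Q_{m_1}-(\cos t,0))\cap(Q_{m_2}-(0,\sin t))$, which has $(x,y)$-measure $\lesssim \lambda^{-2\gamma}+\delta\lambda^{-\gamma}$. So Cauchy--Schwarz yields $\|T^\delta_{loc}(f_{1,m_1},f_{2,m_2})\|_1\lesssim (\lambda^{-\gamma}+\delta^{1/2}\lambda^{-\gamma/2})\|T^\delta_{loc}(f_{1,m_1},f_{2,m_2})\|_2$. Expanding $\|T^\delta_{loc}(f_{1,m_1},f_{2,m_2})\|_2^2$ as a fourfold integral in $(x,y,t,t')$ and translating $x\mapsto x-\cos t'$, $y\mapsto y-\sin t'$, the integrand takes the form $\mathscr{D}^{(1)}_{s_1}f_{1,m_1}(x,y-\sin t')\cdot\mathscr{D}^{(2)}_{s_2}f_{2,m_2}(x-\cos t',y)\cdot\zeta\bar\zeta$ with $s_1=\cos t-\cos t'$ and $s_2=\sin t-\sin t'$. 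A further Cauchy--Schwarz in $(x,y,t,t')$ separates this into $(I_1I_2)^{1/2}$.

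For $I_1$ I change variables $(t,t')\mapsto(s_1,t')$ with $|\partial_t s_1|=|\sin t|\sim\delta$ and range $|s_1|\lesssim\delta^2$; after integrating out $t'$ and translating in $y$, $I_1$ reduces to a constant multiple of $\int_{|s_1|\lesssim\delta^2}\int|\mathscr{D}^{(1)}_{s_1}f_{1,m_1}|^2\,dx\,dy\,ds_1$. The smoothness bound (ii) implies that $\widehat{f_{1,m_1}}$ has rapid Fourier decay outside $|\xi_1|\lesssim\lambda$, so $\widehat{\mathscr{D}^{(1)}_{s_1}f_{1,m_1}}$ is essentially supported in $|\xi_1|\lesssim 2\lambda$. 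Choosing $\mu_1\ge 1-\gamma+\tau$ so that $\delta\lambda^{\gamma+\mu_1}\ge 2\lambda$ (since $\delta\ge\lambda^{-\tau}$), Plancherel in $\xi_2$ together with hypothesis (iii) then give $I_1\lesssim\lambda^{-\mu_2-3\gamma}\|f_{1,m_1}\|_\infty^4$. The factor $I_2$ is bounded trivially: the integrand does not exceed $\|f_{2,m_2}\|_\infty^4$ on its support, which has measure $\lesssim\delta^2\lambda^{-2\gamma}$.

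Combining the two factors and the support-measure Cauchy--Schwarz gives $\|T^\delta_{loc}(f_{1,m_1},f_{2,m_2})\|_1\lesssim \delta^{1/2}\lambda^{-\mu_2/4-O(\gamma)}\|f_{1,m_1}\|_\infty\|f_{2,m_2}\|_\infty$. The nontrivial pairs $(m_1,m_2)$ are constrained by $\lambda^{-\gamma}(m_1-m_2)\approx(\cos t,-\sin t)$ for some $t\in[\delta,2\delta]$, giving at most $O(\lambda^{2\gamma}(1+\delta\lambda^\gamma))$ such pairs with $Q_{m_1}\cup Q_{m_2}$ meeting the support of $\zeta$. Summing over these pairs and using $\delta\le 1$ to absorb $\delta^{3/2}$ into $\delta^{-1}$ then yields the claimed bound with $\mathfrak{c}_1$ a constant depending polynomially on $\gamma,\mu_1,\mu_2$. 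The main obstacle is parameter bookkeeping: one must pick $\mu_1,\mu_2,\gamma,\tau$ simultaneously so that (a) the Fourier-support truncation justifying (iii) is valid ($\mu_1\ge 1-\gamma+\tau$), and (b) the resulting exponent $\mathfrak{c}_1$, together with the $\tau$-loss from the reduction leading to \eqref{desiredbound}, yields strict $\lambda$-decay after summation. A minor technical point is the essential-rather-than-compact Fourier localization of $\widehat{f_{1,m_1}}$, handled by a routine truncation of the rapidly-decaying tail using (ii).
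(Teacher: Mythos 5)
Your proposal deviates substantially from the paper's proof and contains a gap that is not fixable without changing the core strategy.

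The first, more concrete, problem is the Fourier-support constraint you need for $I_1$. After the change of variable you reduce to $\int_{|s_1|\lesssim\delta^2}\|\mathscr{D}^{(1)}_{s_1}f_{1,m_1}\|_{L^2(\R^2)}^2\,ds_1$, and you then invoke hypothesis (iii), whose $\xi_1$-integration is truncated to $|\xi_1|\le R=\delta\lambda^{\gamma+\mu_1}$. To make that truncation vacuous you assume $R\gtrsim\lambda$, i.e.\ $\mu_1\ge 1-\gamma+\tau$. But $R$ is the parameter in the frequency-pruning lemma, and the conclusion of \Cref{Trilinearsmoothing} forces $1-\gamma-\mu_1-\rho-70\mu_2>0$, hence $\mu_1<1-\gamma$. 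Thus the hypothesis you rely on is never available in the regime in which \Cref{lemma:oscillatory} is actually applied. Note also that the rapid-decay tail you propose to discard lives precisely in $R<|\xi_1|\lesssim\lambda$ (not $|\xi_1|\gg\lambda$), so (ii) gives no smallness there; the truncation is genuinely lossy when $\mu_1<1-\gamma$.

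The second, deeper, problem is that the bound your scheme produces is not strong enough even if one granted the $\mu_1$ constraint. The $(x,y)$-support of $T^\delta_{loc}(f_{1,m_1},f_{2,m_2})$ has measure $\lesssim\lambda^{-2\gamma}$ (not $\lambda^{-2\gamma}+\delta\lambda^{-\gamma}$; once a $t$ is chosen, $x$ and $y$ are each pinned to $\lambda^{-\gamma}$-intervals and varying $t$ over $[\delta,2\delta]$ only moves those intervals inside a slightly larger window). Running your numbers with $I_1\lesssim\lambda^{-\mu_2-3\gamma}$ and the trivial $I_2\lesssim\delta^2\lambda^{-2\gamma}$ gives $\|T^\delta_{loc}(f_{1,m_1},f_{2,m_2})\|_1\lesssim\delta^{1/2}\lambda^{-\mu_2/4-9\gamma/4}\|f_{1,m_1}\|_\infty\|f_{2,m_2}\|_\infty$, and summing over the $\lesssim\delta\lambda^{3\gamma}$ admissible pairs (triangle inequality, or Cauchy--Schwarz, it does not matter here) yields $\delta^{3/2}\lambda^{3\gamma/4-\mu_2/4}$. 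This is an increasing power of $\lambda$ unless $\mu_2>3\gamma>\tfrac32$, which contradicts the requirement $\mu_2<(1-\gamma)/70$ coming from the sublevel-set step of \Cref{Trilinearsmoothing}. In other words, the gain $\lambda^{-\mu_2/4}$ from the multiplicative-derivative hypothesis is wiped out by the $\lambda^{3\gamma}$ pairs once you bound $I_2$ trivially.

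The paper avoids this by not performing the Cauchy--Schwarz that separates the two multiplicative derivatives. It keeps the single $TT^*$-type quadruple integral, expands both $\mathscr{D}^{(1)}$ and $\mathscr{D}^{(2)}$ in local Fourier series on $Q_{m_1}$, $Q_{m_2}$, and then splits frequency space into regions where the phase $\lambda^\gamma[k_1\cdot(x+\cos t,y)+k_2\cdot(x,y+\sin t)]$ is either highly oscillatory (pieces $\mathfrak{I}_1,\mathfrak{I}_2,\mathfrak{I}_{3,2}$, handled by integration by parts) or nearly stationary (piece $\mathfrak{I}_{3,1}$). On the stationary piece the key ingredient is the counting claim: for fixed $m_1$ and $k_1$ with $|k_{1,1}\sin\bar t+k_{1,2}\cos\bar t|\lesssim\lambda^{\epsilon_3}$, there are only $O(1+\delta\lambda^{\gamma+\epsilon_3}|k_{1,1}|^{-1})$ many admissible $m_2$. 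This lets the small-multiplicative-derivative hypothesis (iii) be summed over $m_1\in\mathcal{M}_1$ only (gaining $\lambda^{2\gamma}$, not $\lambda^{3\gamma}$), and the splitting on $|k_{1,1}|\gtrless\delta\lambda^{\mu_1}$ then produces the bound $\delta^{-1}\lambda^{-\mu_2/4+\epsilon_3}+\delta^{-1}\lambda^{5\epsilon_3/4-\mu_1/4}$, negative in the exponent for admissible parameter choices. Your approach discards precisely this phase cancellation and the $m_2$-counting, which is why it cannot close.
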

	The $L^1-$bound for the term $T_{loc}^{\delta}\left(f_{1, \sharp}, f_{2, \sharp}\right)$ are obtained by a reduction to the analysis of sublevel sets $\mathcal{E}(\alpha,\beta,\epsilon)$ defined by
	\[\mathcal{E}(\alpha,\beta,\epsilon)=\{(x,y,t)\in K:\;|\alpha(x+\cos t,y)\sin t+\beta(x,y+\sin t)\cos t|\lesssim\epsilon\},\]
	for some $\epsilon>0$ and $\alpha,\beta$ measurable functions on $\R^2$.
	This reduction to the sublevel sets is presented in the following lemma, the proof of which is postponed to \Cref{sec:reductiontosublevel}.
	\begin{lemma}\label{lemma:reductiontosublevel}
		Let $K\subseteq\R^2\times I$ be a compact set where $I=[\delta,2\delta]$ for some $\delta>0$. Then for $\rho>0$, there exist real-valued measurable functions $\alpha_{n_1},\;n_1=1,\dots,\mathcal{N}_1$, and $\beta_{n_2},\;n_2=1,\dots,\mathcal{N}_2$ such that $|\alpha_{n_1}(t)|\sim1$ and $|\beta_{n_2}(t)|\lesssim1$ for all $t\in\R$ and the following holds,
		\begin{equation}
			\|T^\delta_{loc}(f_{1,\#},f_{2,\#})\|_1\lesssim \delta\lambda^{-100}\|f_1\|_\infty\|f_2\|_\infty+\|f_1\|_\infty\|f_2\|_\infty\sum_{n_1=1}^{\mathcal{N}_1}\sum_{n_2=1}^{\mathcal{N}_2}\mathcal{E}(\alpha_{n_1},\beta_{n_2},2\delta\lambda^{\gamma+\mu_1+\rho-1}).
		\end{equation}
	\end{lemma}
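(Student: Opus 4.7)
The plan is to perform a stationary-phase / integration-by-parts (IBP) analysis in the variable $t$ for each oscillatory contribution to $T_{loc}^\delta(f_{1,\#},f_{2,\#})$, splitting the $t$-interval into a region where $|\partial_t\Phi|$ is large (handled by IBP to yield arbitrary polynomial decay) and a complementary region whose $L^1$-contribution is exactly a sublevel-set measure of the form appearing in \Cref{Sublevelsetestimate}. Inserting the pruned representations, the operator becomes
\[T^\delta_{loc}(f_{1,\#},f_{2,\#})(x,y)=\sum_{m_1,m_2,n_1,n_2}\int_\delta^{2\delta} A(x,y,t)\,e^{i\Phi(x,y,t)}\,\zeta(x,y,t)\,dt,\]
with amplitude
\[A=\widetilde\eta_{m_1}(x+\cos t,y)\widetilde\eta_{m_2}(x,y+\sin t)\,h_{1,m_1,n_1}(x+\cos t,y)\,h_{2,m_2,n_2}(x,y+\sin t)\]
and phase $\Phi=\alpha_{m_1,n_1}(y)(x+\cos t)+\beta_{m_2,n_2}(x)(y+\sin t)$, so that
\[\partial_t\Phi=-\alpha_{m_1,n_1}(y)\sin t+\beta_{m_2,n_2}(x)\cos t.\]
I fix a threshold $\tau_0:=2\delta\lambda^{\gamma+\mu_1+\rho}$ and decompose the $t$-integration smoothly via $1=\chi_G+\chi_B$, adapted to $\{|\partial_t\Phi|\gtrsim\tau_0\}$ versus $\{|\partial_t\Phi|\lesssim\tau_0\}$.

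On the support of $\chi_G$, $N$-fold integration by parts in $t$ produces terms involving $t$-derivatives of $A$ and of $\partial_t^{\ge 2}\Phi$ divided by powers of $\partial_t\Phi$. Using the pruning bound $\|\partial^k h_{l,m,n}\|_\infty\lesssim(\delta\lambda^{\gamma+\mu_1})^k\|f_l\|_\infty$, the scaling $\|\partial^k\widetilde\eta_m\|_\infty\lesssim\lambda^{k\gamma}$, $|\partial_t^k\Phi|\lesssim\lambda$ for $k\ge 1$, and $|\partial_t\Phi|\gtrsim\tau_0$, each IBP contributes a gain of order $\lambda^{-\rho}$. Taking $N$ sufficiently large (depending on $\rho$ and $\mu_2$), the $G$-contribution of each quadruple $(m_1,m_2,n_1,n_2)$ is bounded pointwise by $\delta\lambda^{-M}\|f_1\|_\infty\|f_2\|_\infty$ for any prescribed $M$; summing over the $O(\lambda^{2\mu_2})$ pairs $(n_1,n_2)$ and the $O(1)$ pairs $(m_1,m_2)$ making $A$ nonzero at each $(x,y,t)$, then integrating over the bounded $(x,y)$-support imposed by $\zeta$, yields the first term $\delta\lambda^{-100}\|f_1\|_\infty\|f_2\|_\infty$.

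On the support of $\chi_B$, I bound $|A|\le\|f_1\|_\infty\|f_2\|_\infty$ and identify $\{|\partial_t\Phi|\lesssim\tau_0\}$ as a sublevel set. Using that the cubes $Q_m$ have bounded overlap, I refine the $\widetilde\eta_m$ into finitely many pieces with essentially disjoint supports (absorbing the refinement into constants), and pick a measurable selector $(u,v)\mapsto m(u,v)$ returning the unique refined index whose cube contains $(u,v)$. Set
\[\alpha_{n_1}(u,v):=\lambda^{-1}\alpha_{m(u,v),n_1}(v),\qquad \beta_{n_2}(u,v):=\lambda^{-1}\beta_{m(u,v),n_2}(u),\]
so that $|\alpha_{n_1}|\sim 1$ and $|\beta_{n_2}|\lesssim 1$. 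On the support of $A$ one has $(x+\cos t,y)\in Q_{m_1}$ and $(x,y+\sin t)\in Q_{m_2}$, whence $\alpha_{m_1,n_1}(y)=\lambda\,\alpha_{n_1}(x+\cos t,y)$ and $\beta_{m_2,n_2}(x)=\lambda\,\beta_{n_2}(x,y+\sin t)$; the bad condition $|\partial_t\Phi|\lesssim\tau_0$ then reads
\[|\alpha_{n_1}(x+\cos t,y)\sin t+\beta_{n_2}(x,y+\sin t)\cos t|\lesssim\delta\lambda^{\gamma+\mu_1+\rho-1}\]
(the sign on $\alpha$ is irrelevant to the measure of the set). Integrating $\|f_1\|_\infty\|f_2\|_\infty$ against this indicator and summing over $(n_1,n_2)$ produces the second term of the conclusion.

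The main obstacle is the gluing step above: the symbols $\alpha_{m,n},\beta_{m,n}$ genuinely depend on $m$ and the cutoffs $\widetilde\eta_m$ overlap, so $\alpha_{n_1},\beta_{n_2}$ must be built (via the disjointification refinement of the $\widetilde\eta_m$) so that the pointwise identity for $\partial_t\Phi$ survives on every amplitude support while the functions remain measurable with the claimed sizes. A secondary delicacy is the IBP bookkeeping: one must verify that the choice $\tau_0=2\delta\lambda^{\gamma+\mu_1+\rho}$ yields a genuine $\lambda^{-\rho}$ gain per step uniformly in $\delta\in[\lambda^{-\tau},1)$, which turns out to hinge on the earlier constraint $\tau<\gamma-\tfrac{1}{2}$ controlling the size of $\partial_t^2\Phi/(\partial_t\Phi)^2$.
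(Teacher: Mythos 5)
Your proposal follows essentially the same route as the paper: insert the $\sharp$-representations, identify the oscillatory phase $\Phi$ with $\partial_t\Phi=-\alpha_{m_1,n_1}(y)\sin t+\beta_{m_2,n_2}(x)\cos t$, split at the threshold $\delta\lambda^{\gamma+\mu_1+\rho}$, handle the large-derivative region by repeated integration by parts using the pruning bound $\|\partial^N h\|_\infty\lesssim(\delta\lambda^{\gamma+\mu_1})^N\|f\|_\infty$, and convert the small-derivative region into the sublevel set after disjointifying the cubes $Q_m$ and defining $\alpha_{n_1},\beta_{n_2}$ via the resulting measurable selector. The only structural difference is that the paper splits the $(x,y)$-domain according to the size of $\partial_t\widetilde{\mathcal P}$ at a reference time $\bar t_m$ (chosen per $m$ and then compared to $t$ via $|t-\bar t_m|\lesssim\delta^{-1}\lambda^{-\gamma}$), which makes the IBP clean because the cutoff is $t$-independent, whereas you use a smooth partition $\chi_G+\chi_B$ in the $t$-variable, which works but forces you to also track $\partial_t^k\chi_G$ in the IBP bookkeeping — a term you do not explicitly mention, though its scale $(\delta^{-1}\lambda^{1-\gamma-\mu_1-\rho})^k$ is admissible under the same constraint $\tau<\gamma-\tfrac12$ you invoke for $\partial_t^2\Phi/(\partial_t\Phi)^2$. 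Both counting devices ($O(1)$ pairs $(m_1,m_2)$ per point versus the paper's global $\#\widetilde{\mathcal M}\lesssim\lambda^{3\gamma}$) yield the same conclusion after integrating over the compact spatial support.
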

	Using the above lemma along with \Cref{Sublevelsetestimate}, we have
	\begin{align*}
		\|T^\delta_{loc}(f_{1,\#},f_{2,\#})\|_1&\lesssim \delta\lambda^{-100}\|f_1\|_\infty\|f_2\|_\infty+\|f_1\|_\infty\|f_2\|_\infty\sum_{n_1=1}^{\mathcal{N}_1}\sum_{n_2=1}^{\mathcal{N}_2}\delta\lambda^{\frac{\gamma+\mu_1+\rho-1}{35}}\\
		&\lesssim\delta(\lambda^{-100}+\lambda^{\frac{\gamma+\mu_1+\rho-1}{35}+2\mu_2})\|f_1\|_\infty\|f_2\|_\infty.
	\end{align*}
	We require $\mu_1,\mu_2,\rho$ to satisfy ${1-\gamma-\mu_1-\rho}-70\mu_2>0$. This concludes the proof of \Cref{Trilinearsmoothing}.
	\subsection{Proof of \Cref{lemma:oscillatory}}\label{sec:Oscillatory}
	Without loss of generality, we may assume that $\sup_{m_l\in\Z^2}\|f_{l,m_l}\|_\infty=1,\;l=1,2$. Moreover, we will prove the estimate when \eqref{smallMD} is true for $l=1$, the other case follows by a similar reasoning.
	
	Let $\mathcal M=\left\{\left(m_1, m_2\right) \in\left(\Z^2\right)^2:\left\|T\left(f_{1,m_1}, f_{2, m_2}\right)\right\|_1 \neq 0\right\}$ and $\mathcal M_l=\left\{m_l \in \mathbb{Z}^2: \exists\left(m_1, m_2\right) \in \mathcal{M}\right\}$.
	
	We claim that $\# \mathcal M_l \lesssim \lambda^{2 \gamma},\;l=1,2$ and $\# \mathcal M \lesssim \lambda^{3 \gamma}$. Note that, due to the compact support of $\zeta$, it is easy to verify that $\# \mathcal M_l \lesssim \lambda^{2 \gamma},\;l=1,2$ and $\# \mathcal M \lesssim \lambda^{4 \gamma}$. Additionally, we have $(x+\cos t,y)\in Q_{m_1}$, $(x,y+\sin t)\in Q_{m_2}$ and
	\[[(y+\sin t)-y]^2=\sin^2t=1-\cos^2t=1-[(x+\cos t)-x]^2.\]
	The above conditions imply that
	\begin{eqnarray*}
		&&\lambda^{-2\gamma}(m_{2,2}-m_{1,2})^2+O(\lambda^{-2\gamma})=1-\lambda^{-2\gamma}(m_{1,1}-m_{2,1})^2\\
		&\implies& (m_{1,1}-m_{2,1})^2+(m_{2,2}-m_{1,2})^2=O(\lambda^{2\gamma}).
	\end{eqnarray*}
	Thus, for a given $m_1$, we have $O(\lambda^{\gamma})$ many choices of $m_2$ such that $(m_1,m_2)\in\mathcal{M}$ which in turn implies that $\# \mathcal M \lesssim \lambda^{3 \gamma}$.
	
	For each ${m}=\left(m_1, m_2\right) \in \mathcal{M}$, we fix $(\bar{x}, \bar{y}, \bar{t})=\left(\bar{x}_{{m}}, \bar{y}_{{m}}, \bar{t}_{{m}}\right)$ such that,
	\[
	(\bar{x}+\cos \bar{t}, \bar{y}) \in Q_{m_1} \quad \text{and} \quad(\bar{x}, \bar{y}+\sin \bar{t}) \in Q_{m_2}.
	\]
	Then, for $(x, y, t)$ with $(x+\cos t, y)\in Q_{m_1}$ and $(x, y+\sin t)\in Q_{m_2}$ we have that

	\[|x-\bar{x}|+|y-\bar{y}| \lesssim \lambda^{-\gamma}, \] and
	\begin{align*}|t-\bar{t}|&=\left|\sin t^{\prime}\right|^{-1}|\cos t-\cos \bar{t}|,~~ \exists t^{\prime} \in(t, \bar{t})\\
		&\lesssim \delta^{-1}\left(|x-\bar{x}|+|x+\cos t-(\bar{x}+\cos \bar{t})|\right) \lesssim\delta^{-1} \lambda^{-\gamma}.
	\end{align*}
	
	Now, $T\left(f_{1, m_1}, f_{2, m_2}\right)(x, y)=\int_{\delta}^{2 \delta} f_{1, m_1}(x+\cos t, y) f_{2,m_2}(x, y+\sin t) \zeta_m(x, y, t)\;d t$, where
	\[\zeta_m(x, y, t)=\zeta(x, y, t) \tilde{\eta}\left(\lambda^\gamma(x+\cos t, y)-m_1,\right) \tilde{\eta}\left(\lambda^\gamma(x, y+\sin t)-m_2\right).\]
	
	We note that $\left\|\partial^\alpha \zeta_m\right\|_{\infty}\lesssim \lambda^{\gamma|\alpha|} \quad \forall \quad \alpha=\left(\alpha_1, \alpha_2, \alpha_3\right) \in(\mathbb{N} \cup\{0\})^3.$
	
	By Cauchy- Schwartz inequality and a change of variable $s=s+t$, we obtain,
	\[
	\begin{aligned}
		\|T(f_{1,m_1},f_{2,m_2})\|_1\lesssim\lambda^{-\gamma}\Big(\Big|\int_{\R^4}&f_{1,m_1}(x+\cos t,y)f_{2,m_2}(x,y+\sin t)\\
		&\overline{f_{1,m_1}(x+\cos(t+s),y)f_{2,m_2}(x,y+\sin(t+s))}\tilde{\zeta}_m(x,y,t,s)\;dxdydtds\Big|\Big)^\frac{1}{2},
	\end{aligned}
	\]
	where $\tilde{\zeta}_m \in C_c^{\infty}\left(\mathbb{R}^2 \times(\delta, 4 \delta)^2\right)$ is supported in a rectangle in $\mathbb{R}^4$ with side lengths $\lambda^{-\gamma}\times\lambda^{-\gamma}\times\delta^{-1}\lambda^{-\gamma}\times\delta^{-1}\lambda^{-\gamma}$ and $\left\|\partial^\alpha \tilde{\zeta}_m\right\|_\infty\lesssim\lambda^{\gamma|\alpha|},\quad \forall \alpha=\left(\alpha_1, \alpha_2, \alpha_3, \alpha_4\right)$. Moreover,
	\[
	\begin{aligned}
		|s| & =|t+s-t|=|\sin (t+s_0)|^{-1}| \cos (t+s)-\cos t|,\quad s_0\in (0, s)\\
		& \lesssim \delta^{-1}|x+\cos (t+s)-(x+\cos t)| \lesssim \delta^{-1} \lambda^{-\gamma}.
	\end{aligned}
	\]
	
	We linearize the phase functions in $s$ by writing,
	\[
	\begin{aligned}
		\cos (t+s)&=\cos t-(\sin t) s+O\left(\lambda^{-2\gamma}\right)\\
		& =\cos t-(\sin \bar t) s+(\sin \bar{t}-\sin t)s+O(\lambda^{-2\gamma}) \\
		& =\cos t-(\sin \bar t) s+O\left(\lambda^{-2 \gamma}\right) .
	\end{aligned}
	\]
	
	Similarly, $\sin (t+s)=\sin t+(\cos \bar{t})s+O\left(\lambda^{-2\gamma}\right)$.
	This implies that
	\[
	\begin{aligned}
		f_{1, m_1}(x+\cos (t+s), y)&=f_{1, m_1}(x+\cos t-\sin \bar{t} s, y)+O\left(\lambda^{1-2 \gamma}\right), \\
		f_{2, m_2}(x, y+\sin (t+s))&=f_{2, m_2}(x, y+\sin t+\cos \bar{t} s)+O\left(\lambda^{1- 2\gamma}\right).
	\end{aligned}
	\]
	Therefore, we have
	\begin{align*}
		&\|T(f_{1,m_1},f_{2,m_2})\|_{1}\\
		\lesssim&\lambda^{-\gamma}\left(\left|\int_{\R^4}\mathscr{D}^{(1)}_{-(\sin\bar{t})s}f_{1,m_1}(x+\cos t,y)\mathscr{D}^{(2)}_{(\cos\bar{t})s}f_{2,m_2}(x,y+\sin t)\tilde{\zeta}_m(x,y,t,s)\;dxdydtds\right|\right)^{\frac{1}{2}}\\
		&+\lambda^{-\gamma}\left(\left|\int_{|s|\lesssim\delta^{-1}\lambda^{-\gamma}}\int_{\R^3}\lambda^{1-2\gamma}\tilde{\zeta}_m(x,y,t,s)\;dxdydtds\right|\right)^{\frac{1}{2}}\\
		&+\lambda^{-\gamma}\left(\left|\int_{|s|\lesssim\delta^{-1}\lambda^{-\gamma}}\int_{\R^3}\lambda^{2-4\gamma}\tilde{\zeta}_m(x,y,t,s)\;dxdydtds\right|\right)^{\frac{1}{2}}
	\end{align*}
	The last two terms can be dominated by $\delta^{-1}\lambda^{1-5\gamma}$. By Cauchy-Schwartz inequality in ${m}$, we get
	\begin{equation}\label{intereqn1}
		\begin{aligned}
			&\|T(\sum_{m_1\in\Z^2}f_{1,m_1},\sum_{m_2\in\Z^2}f_{2,m_2})\|_{1}\\
			\lesssim&\lambda^{-\gamma}\left(\sum_{{m}\in\mathcal{M}}\left|\int_{\R^4}\mathscr{D}^{(1)}_{-\sin\bar{t}s}f_{1,m_1}(x+\cos t,y)\mathscr{D}^{(2)}_{\cos\bar{t}s}f_{2,m_2}(x,y+\sin t)\tilde{\zeta}_m(x,y,t,s)\;dxdydtds\right|\right)^{\frac{1}{2}}\\
			&+\delta^{-1}\lambda^{1-2\gamma}
		\end{aligned}
	\end{equation}
	
	We use local Fourier series expansion of $\mathscr{D}^{(l)}_sf_l,\;l=1,2$ to get
	\[\mathscr{D}^{(l)}_sf_{l,m_l}(x,y)=\sum_{k\in\Z^2}a_{l,m_l,k,s}e^{\pi i\lambda^\gamma k\cdot(x,y)}\quad\text{ for }\quad x,y\in Q_{m_l},\]
	where $a_{l,m_l,k,s}=\frac{\lambda^{2\gamma}}{4}\int_{\R^2}\mathscr{D}^{(1)}_sf_{l,m_l}(x,y)e^{\pi i\lambda^\gamma k\cdot(x,y)}\;dxdy$. We have the following uniform bounds in $s$ for the coefficients $a_{l,m_l,k,s},\;l=1,2$.
	\begin{align}
		|a_{l,m_l,k,s}|&\lesssim\frac{\lambda^{2\gamma+(1-\gamma)(N_1+N_2)}}{|k_1|^{N_1}|k_2|^{N_2}},\quad\forall N_1,N_2\in\N\cup\{0\},\label{est:coefficient},\\
		\sum_{k_l\in\Z^2}|a_{l,m_l,k_l,s}|^2&\lesssim\lambda^{2\gamma}\|\mathscr{D}^{(l)}_sf_{l,m_l}\|_2^2\lesssim1.\label{est:l2coefficient}
	\end{align}
	The first one is a direct application of integration by parts and the other follows by the support of the functions $f_{l,m_l}$.
	We substitute the above Fourier expansion in \Cref{intereqn1} so that the first term can be rewritten as
	\begin{align*}
		&\lambda^\frac{\gamma}{2}\Big(\sum_{{m}\in\mathcal{M}}\Big|\int_{|s|\lesssim\delta^{-1}\lambda^{-\gamma}}\sum_{(k_1,k_2)\in(\Z^2)^2}a_{1,m_1,k_1,-(\sin\bar{t})s}\;a_{2,m_2,k_2,(\cos\bar{t})s}\\
		&\hspace{3cm}\int_{\R^3}e^{\pi i\lambda^\gamma[k_1\cdot(x+\cos t,y)+k_2\cdot(x,y+\sin t)]}\tilde{\zeta}_m(x,y,t,s)\;dxdydtds\Big|\Big)^\frac{1}{2}\\
		&\lesssim\sum_{j=1}^3\lambda^\frac{\gamma}{2}\Big(\sum_{{m}\in\mathcal{M}}\Big|\int_{|s|\lesssim\delta^{-1}\lambda^{-\gamma}}\sum_{(k_1,k_2)\in L_j}a_{1,m_1,k_1,-(\sin\bar{t})s}\;a_{2,m_2,k_2,(\cos\bar{t})s}\\
		&\hspace{3cm}\int_{\R^3}e^{\pi i\lambda^\gamma[k_1\cdot(x+\cos t,y)+k_2\cdot(x,y+\sin t)]}\tilde{\zeta}_m(x,y,t,s)\;dxdydtds\Big|\Big)^\frac{1}{2}\\
		&=:\sum_{j=1}^3\mathfrak{I}_j,
	\end{align*}
	where for $\epsilon_1,\epsilon_2>0$ (to be chosen later), we decompose $(\Z^2)^2=L_1\cup L_2\cup L_3$ with
	\begin{align*}
		L_1&=\{(k_1,k_2)\in(\Z^2)^2 : |k_{1,1}|\geq\lambda^{1-\gamma+\epsilon_1}\},\\
		L_2&=\{(k_1,k_2)\in(\Z^2)^2 : |k_{1,1}|<\lambda^{1-\gamma+\epsilon_1}\text{ and }|k_{2,2}|\geq\lambda^{1-\gamma+\epsilon_2}\},\\
		L_3&=\{(k_1,k_2)\in(\Z^2)^2 : |k_{1,1}|<\lambda^{1-\gamma+\epsilon_1}\text{ and }|k_{2,2}|<\lambda^{1-\gamma+\epsilon_2}\}.
	\end{align*}	
	To estimate $\mathfrak{I}_1$, we use \Cref{est:coefficient} and the support of $\tilde\zeta_m$ to obtain,
	\begin{align*}
		\mathfrak{I}_1&\lesssim\lambda^\frac{\gamma}{2}\left(\sum_{{m}\in\mathcal{M}}\left|\int_{|s|\lesssim\delta^{-1}\lambda^{-\gamma}}\sum_{(k_1,k_2)\in L_1}\lambda^{4\gamma+(1-\gamma)(N+6)}\frac{1}{|k_{1,1}|^N}\frac{1}{(1+|k_{1,2}|)^2}\frac{1}{(1+|k_{2,1}|)^2}\frac{1}{(1+|k_{2,2}|)^2}\delta^{-1}\lambda^{-3\gamma}\right|\right)^\frac{1}{2}\\
		&\lesssim\delta^{-1}\lambda^{3-\gamma-\frac{N\epsilon_1}{2}}\lesssim\delta^{-1}\lambda^{-100},
	\end{align*}
	where we used $\# \mathcal M \lesssim \lambda^{3 \gamma}$ in the last step and we choose $N\in\N$ such that $-3+\gamma+\frac{N\epsilon_1}{2}>100$ to obtain the required decay in $\lambda$.
	The estimate of $\mathfrak{I}_2$ follows by similar arguments.
	
	To estimate the term $\mathfrak{I}_3$, we observe that the gradient of the phase function,
	\[\mathcal{P}_\lambda(x,y,t)=\lambda^\gamma[(k_{1,1}+k_{2,1})x+(k_{1,2}+k_{2,2}y+k_{1,1}\cos t+k_{2,2}\sin t)],\]
	is given by
	\[\nabla \mathcal{P}_\lambda(x,y,t)=\lambda^\gamma[(k_{1,1},k_{1,2},0)+(k_{2,1},k_{2,2},0)+(0,0,-k_{1,1}\sin t+k_{2,2}\cos t)].\]
	In view of the phase estimate, we define $L_{3,1}=\{(k_1,k_2)\in L_3:\;|k_1+k_2|\leq\lambda^{\epsilon_3}\;\text{and}\;|-k_{1,1}\sin \bar t+k_{2,2}\cos \bar t|\leq\lambda^{\epsilon_3}\}$ for some $\epsilon_3>0$ to be chosen later. Hence, we have
	\begin{align*}
		\mathfrak{I}_3&\lesssim\lambda^\frac{\gamma}{2}\Big(\sum_{{m}\in\mathcal{M}}\Big|\int_{|s|\lesssim\delta^{-1}\lambda^{-\gamma}}\sum_{(k_1,k_2)\in L_{3,1}}a_{1,m_1,k_1,-(\sin\bar{t})s}\;a_{2,m_2,k_2,(\cos\bar{t})s}\\
		&\hspace{3cm}\int_{\R^3}e^{\pi i\lambda^\gamma[k_1\cdot(x+\cos t,y)+k_2\cdot(x,y+\sin t)]}\tilde{\zeta}_m(x,y,t,s)\;dxdydtds\Big|\Big)^\frac{1}{2}\\
		&+\lambda^\frac{\gamma}{2}\Big(\sum_{{m}\in\mathcal{M}}\Big|\int_{|s|\lesssim\delta^{-1}\lambda^{-\gamma}}\sum_{(k_1,k_2)\in L_3\setminus L_{3,1}}a_{1,m_1,k_1,-(\sin\bar{t})s}\;a_{2,m_2,k_2,(\cos\bar{t})s}\\
		&\hspace{3cm}\int_{\R^3}e^{\pi i\lambda^\gamma[k_1\cdot(x+\cos t,y)+k_2\cdot(x,y+\sin t)]}\tilde{\zeta}_m(x,y,t,s)\;dxdydtds\Big|\Big)^\frac{1}{2}\\
		&=:\mathfrak{I}_{3,1}+\mathfrak{I}_{3,2}.
	\end{align*}
	\subsection*{Estimate of $\mathfrak{I}_{3,2}$}We observe that for $(k_1,k_2)\in L_3\setminus L_{3,1}$, either $|k_1+k_2|\geq\lambda^{\epsilon_3}$ or $|-k_{1,1}\sin \bar t+k_{2,2}\cos \bar t|\geq\lambda^{\epsilon_3}$. In the latter case, we have
	\begin{equation}\label{observation1}
		\begin{aligned}
			|-k_{1,1}\sin t+k_{2,2}\cos t|&\geq |-k_{1,1}\sin \bar t+k_{2,2}\cos \bar t|-|-k_{1,1}(\sin t-\sin \bar t)+k_{2,2}(\cos t -\cos \bar t)|\\
			&\geq|-k_{1,1}\sin \bar t+k_{2,2}\cos \bar t|-(|k_{1,1}|+|k_{2,2}|)|t-\bar t|\\
			&\geq|-k_{1,1}\sin \bar t+k_{2,2}\cos \bar t|-\lambda^{1-\gamma}(\lambda^{\epsilon_1}+\lambda^{\epsilon_2})\lambda^{-\gamma}\\
			&\geq|-k_{1,1}\sin \bar t+k_{2,2}\cos \bar t|-2\lambda^{1-2\gamma+\epsilon_3}\\
			&\gtrsim|-k_{1,1}\sin \bar t+k_{2,2}\cos \bar t|,
		\end{aligned}
	\end{equation}
	where we require that $\max\{\epsilon_1,\epsilon_2\}<\epsilon_3$ in the second to last step. Hence, by Cauchy-Schwartz inequality in $(k_1,k_2)$ and \eqref{est:l2coefficient}, we obtain
	\begin{align*}
		\mathfrak{I}_{3,2}&\leq\lambda^\frac{\gamma}{2}\Bigg(\sum_{{m}\in\mathcal{M}}\int_{|s|\lesssim\delta^{-1}\lambda^{-\gamma}}\Big(\sum_{(k_1,k_2)\in(\Z^2)^2}|a_{1,m_1,k_1,-(\sin\bar{t})s}\;a_{2,m_2,k_2,(\cos\bar{t})s}|^2\Big)^\frac{1}{2}\\
		&\hspace{3cm}\Big(\sum_{(k_1,k_2)\in L_3\setminus L_{3,1}}\Big|\int_{\R^3}e^{\pi i\lambda^\gamma[k_1\cdot(x+\cos t,y)+k_2\cdot(x,y+\sin t)]}\tilde{\zeta}_m(x,y,t,s)\;dxdydt\Big|^2\Big)^\frac{1}{2}ds\Bigg)^\frac{1}{2}\\
		&\lesssim\lambda^\frac{\gamma}{2}\Big(\sum_{(k_1,k_2)\in L_3\setminus L_{3,1}}\big|\lambda^{-\gamma N}\max\{|k_1+k_2|,|-k_{1,1}\sin\bar t+k_{2,2}\cos\bar t|\}^{-N}\lambda^{\gamma N}\delta^{-1}\lambda^{-3\gamma}\big|^2\Big)^\frac{1}{4}\\
		&\hspace{3cm}\Big(\sum_{{m}\in\mathcal{M}}\int_{|s|\lesssim\delta^{-1}\lambda^{-\gamma}}ds\Big)^\frac{1}{2}\\
		&\lesssim\lambda^\frac{\gamma}{2}\left(\lambda^{-\frac{N\epsilon_3}{2}-\frac{3\gamma}{2}}\delta^{-\frac{1}{2}}\right)\left(\delta^{-1}\lambda^{2\gamma}\right)^\frac{1}{2}\\
		&=\delta^{-1}\lambda^{-100},
	\end{align*}
	where we used integration by parts in the variables $x,y,$ and $t$ and \eqref{observation1} in the second step. We now determine $N\in\N$ such that $N\epsilon_3>200$.
	
	\subsection*{Estimate of $\mathfrak{I}_{3,1}$} To bound the term $\mathfrak{I}_{3,1}$, we will ignore the phase function unlike the estimate for $\mathfrak{I}_{3,2}$. Instead, we will rely on a counting argument and the small multiplicative derivative hypothesis \eqref{smallMD}. We begin by noting that for $(k_1,k_2)\in L_{3,1}$, we have
	\[|k_{1,1}\sin\bar t+k_{1,2}\cos\bar t|\leq|-k_{1,1}\sin\bar t+k_{2,2}\cos\bar t| +|\cos\bar t||k_{2,2}+k_{1,2}|\lesssim\lambda^{\epsilon_3}.\]
	We also observe that for a fixed $k_2$ we have $O(\lambda^{2\epsilon_3})$ many $k_1$'s and vice-versa. This coupled with Cauchy-Schwartz inequality implies that
	\begin{align}
		\mathfrak{I}_{3,1}&\lesssim\lambda^\frac{\gamma}{2}\Bigg(\sum_{{m}\in\mathcal{M}}\int_{|s|\lesssim\delta^{-1}\lambda^{-\gamma}}\Big(\sum_{(k_1,k_2)\in L_{3,1}}|a_{1,m_1,k_1,-(\sin\bar{t})s}|^2\Big)^\frac{1}{2}\Big(\sum_{(k_1,k_2)\in L_{3,1}}|a_{2,m_2,k_2,(\cos\bar{t})s}|^2\Big)^\frac{1}{2}\lambda^{-3\gamma}\delta^{-1}\;ds\Bigg)^\frac{1}{2}\nonumber\\
		&\lesssim\delta^{-\frac{1}{2}}\lambda^{-\gamma+\epsilon_3}\Bigg(\sum_{{m}\in\mathcal{M}}\int_{|s|\lesssim\delta^{-1}\lambda^{-\gamma}}\Big(\sum_{\substack{k_1\in\Z^2:\\|k_{1,1}\sin\bar t+k_{1,2}\cos\bar t|\lesssim\lambda^{\epsilon_3}}}|a_{1,m_1,k_1,-(\sin\bar{t})s}|^2\Big)^\frac{1}{2}\Big(\sum_{k_2\in\Z^2}|a_{2,m_2,k_2,(\cos\bar{t})s}|^2\Big)^\frac{1}{2}\;ds\Bigg)^\frac{1}{2}\nonumber\\
		&\lesssim\delta^{-\frac{1}{2}}\lambda^{-\gamma+\epsilon_3}\Bigg(\sum_{{m}\in\mathcal{M}}\int_{|s|\lesssim\delta^{-1}\lambda^{-\gamma}}\Big(\sum_{\substack{k_1\in\Z^2:\\|k_{1,1}\sin\bar t+k_{1,2}\cos\bar t|\lesssim\lambda^{\epsilon_3}}}|a_{1,m_1,k_1,-(\sin\bar{t})s}|^2\Big)^\frac{1}{2}\;ds\Bigg)^\frac{1}{2}\nonumber\\
		&\lesssim\delta^{-\frac{1}{2}}\lambda^{-\gamma+\epsilon_3}\Bigg(\sum_{{m}\in\mathcal{M}}\int_{|s|\lesssim\lambda^{-\gamma}}\Big(\sum_{\substack{k_1\in\Z^2:\\|k_{1,1}\sin\bar t+k_{1,2}\cos\bar t|\lesssim\lambda^{\epsilon_3}}}|a_{1,m_1,k_1,s}|^2\Big)^\frac{1}{2}\;\delta^{-1}ds\Bigg)^\frac{1}{2}\nonumber\\
		&\lesssim\delta^{-1}\lambda^{-\frac{\gamma}{2}+\epsilon_3}\Bigg(\sum_{{m}\in\mathcal{M}}\int_{|s|\lesssim\lambda^{-\gamma}}\sum_{\substack{k_1\in\Z^2:\\|k_{1,1}\sin\bar t+k_{1,2}\cos\bar t|\lesssim\lambda^{\epsilon_3}}}|a_{1,m_1,k_1,s}|^2\;ds\Bigg)^\frac{1}{4}\label{eqn:intermediate1},
	\end{align}
	where we used \eqref{est:l2coefficient} in the second step and two applications of Cauchy-Schwartz inequality along with $\#\mathcal{M}\lesssim\lambda^{3\gamma}$ in the last step.
	
	\subsection*{Claim}
	Next, we claim that for fixed $m_1,k_1\in\Z^2$, there are atmost $O(1+\delta\lambda^{\gamma+\epsilon_3}|k_{1,1}|^{-1})$ many choices of $m_2\in\Z^2$ such that $|k_{1,1}\sin\bar t+k_{1,2}\cos\bar t|\lesssim\lambda^{\epsilon_3}$.\\
	Let us first verify the claim. Let $m_2,m_2'\in\Z^2$ be two indices satisfying the hypothesis of the claim and let ${m}=(m_1,m_2)$ and ${m'}=(m_1',m_2')$. Then, we have
	\begin{align*}
		|\bar t_{{m}}-\bar t_{{m'}}|&\gtrsim\frac{|\cos(\bar t_{{m}})-\cos (\bar t_{{m'}})|}{|\sin t'|},\quad\text{for some}\quad t'\in(\bar t_{{m}},\bar t_{{m'}})\\
		&\gtrsim\frac{1}{\delta}\left[\left|\left(\bar x_{{m}}+\cos(\bar t_{{m}})\right)-\left(\bar x_{{m'}}+\cos (\bar t_{{m'}})\right)|-|\bar x_{{m}}-\bar x_{{m'}}\right|\right]\\
		&\geq \frac{\lambda^{-\gamma}}{\delta}\left[|m_{2,1}-m_{2,1}'|-4\right]\\
		&\gtrsim\frac{\lambda^{-\gamma}}{\delta}|m_{2,1}-m_{2,1}'|,\quad \text{for}\quad |m_{2,1}-m_{2,1}'|\geq 8.
	\end{align*}
	Hence the hypothesis and the above observation implies that,
	\begin{align*}
		\lambda^{\epsilon_3}&\gtrsim|k_{1,1}\sin\bar t_{{m}}+k_{1,2}\cos\bar  t_{{m}}|+|k_{1,1}\sin\bar t_{{m'}}+k_{1,2}\cos\bar t_{{m'}}|\\
		&\gtrsim|k_{1,1}\tan\bar t_{{m}}+k_{1,2}-k_{1,1}\tan\bar t_{{m'}}-k_{1,2}|\\
		&\geq|k_{1,1}||\sec^2 t'||\bar t_{{m}}-\bar t_{{m'}}|,\quad \text{for some}\quad t'\in(\bar t_{{m}},\bar t_{{m'}})\\
		&\gtrsim\lambda^{-\gamma}\delta^{-1}|k_{1,1}||m_{2,1}-m_{2,1}'|.
	\end{align*}
	Hence, the claimed bound follows as for fixed $m_1$ and $m_{2,1}$, there are only $O(1)$ choices of $m_{2,2}$.

	To exploit the decay in $k_{1,1}$ in the claim, we divide the term in \eqref{eqn:intermediate1} into two parts such that $|k_{1,1}|\geq\delta\lambda^{\mu_1}$ or $|k_{1,1}|<\delta\lambda^{\mu_1}$. Since $\#\{m_1\in\Z^2:{m}=(m_1,m_2)\in\mathcal{M}\}\leq\lambda^{2\gamma}$, we have that
	\begin{align*}
		\mathfrak{I}_{3,1}&\lesssim\delta^{-1}\lambda^{-\frac{\gamma}{2}+\epsilon_3}\|f_{2,m_2}\|_\infty^\frac{1}{2}\Bigg(\int_{|s|\lesssim\lambda^{-\gamma}}\sum_{{m}\in\mathcal{M}}\sum_{\substack{k_1\in\Z^2:\\|k_{1,1}|<\delta\lambda^{\mu_1}}}|a_{1,m_1,k_1,s}|^2\;ds\Bigg)^\frac{1}{4}\\
		&+\delta^{-1}\lambda^{-\frac{\gamma}{2}+\epsilon_3}\|f_{2,m_2}\|_\infty^\frac{1}{2}\Bigg(\int_{|s|\lesssim\lambda^{-\gamma}}\sum_{{m}\in\mathcal{M}}\sum_{\substack{k_1\in\Z^2:\\|k_{1,1}\sin\bar t+k_{1,2}\cos\bar t|\lesssim\lambda^{\epsilon_3}\\|k_{1,1}|\geq\delta\lambda^{\mu_1}}}|a_{1,m_1,k_1,s}|^2\;ds\Bigg)^\frac{1}{4}\\
		&=:\mathfrak{I}_{3,1,1}+\mathfrak{I}_{3,1,2}.
	\end{align*}
	To estimate $\mathfrak{I}_{3,1,1}$, we use the fact that for a fixed $m_1\in\mathcal{M}_1$ there are atmost $O(\lambda^{\gamma})$ many $m_2\in\mathcal{M}_2$ to obtain
	\begin{align*}
		\mathfrak{I}_{3,1,1}&\lesssim\delta^{-1}\lambda^{-\frac{\gamma}{2}+\epsilon_3+\frac{\gamma}{4}}\Bigg(\int_{|s|\lesssim\lambda^{-\gamma}}\sum_{m_1\in\mathcal{M}_1}\sum_{\substack{k_1\in\Z^2:\\|k_{1,1}|<\delta\lambda^{\mu_1}}}|a_{1,m_1,k_1,s}|^2\;ds\Bigg)^\frac{1}{4}\\
		&\lesssim\delta^{-1}\lambda^{-\frac{\gamma}{4}+\epsilon_3}\Bigg(\lambda^{2\gamma}\sum_{m_1\in\mathcal{M}_1}\int_{s\in\R}\int_{|\xi_1|\lesssim\delta\lambda^{\gamma+\mu_1}}\left|\widehat{\mathscr{D}_s^{(1)} f_{1, m_1}}(\xi)\right|^2 d \xi\;ds\Bigg)^\frac{1}{4}\\
		&\lesssim\delta^{-1}\lambda^{-\frac{\gamma}{4}+\epsilon_3}\Bigg(\lambda^{2\gamma}\lambda^{2\gamma}\lambda^{-\mu_2-3\gamma}\Bigg)^\frac{1}{4}\\
		&\lesssim\delta^{-1}\lambda^{-\frac{\mu_2}{4}+\epsilon_3},
	\end{align*}
	where we used the hypothesis \eqref{smallMD} in the second last step and restrict $\epsilon_3>0$ to satisfy $\mu_2>4\epsilon_3$.
	
In order to prove required estimate for the term $\mathfrak{I}_{3,1,2}$, we use the above claim to obtain
	\begin{align*}
		\mathfrak{I}_{3,1,2}&\lesssim\delta^{-1}\lambda^{-\frac{\gamma}{2}+\epsilon_3}\Bigg(\int_{|s|\lesssim\lambda^{-\gamma}}\sum_{m_1\in\mathcal{M}_1}\sum_{\substack{k_1\in\Z^2:\\|k_{1,1}|\geq\delta\lambda^{\mu_1}}}(1+\delta\lambda^{\gamma+\epsilon_3}|k_{1,1}|^{-1})|a_{1,m_1,k_1,s}|^2\;ds\Bigg)^\frac{1}{4}\\
		&\lesssim\delta^{-1}\lambda^{-\frac{\gamma}{4}+\frac{5\epsilon_3}{4}-\frac{\mu_1}{4}}\Bigg(\int_{|s|\lesssim\lambda^{-\gamma}}\sum_{m_1\in\mathcal{M}_1}\sum_{k_1\in\Z^2}|a_{1,m_1,k_1,s}|^2\;ds\Bigg)^\frac{1}{4}\\
		&\lesssim\delta^{-1}\lambda^{\frac{5\epsilon_3}{4}-\frac{\mu_1}{4}},
	\end{align*}
	where we require that $5\epsilon_3<\mu_1<\gamma+\epsilon_3$.
	This concludes the proof of \Cref{lemma:oscillatory}.
	
	\subsection{Proof of \Cref{lemma:reductiontosublevel}}\label{sec:reductiontosublevel}
	We note that $\|T^\delta_{loc}(f_{1,\#},f_{2,\#})\|_1$ is dominated by $O(\lambda^{2\mu_2})$ many terms of the form,
	\begin{equation}\label{eqn:intermediate2}
		\sum_{{m}\in(\Z^2)^2}\iint\left|\int_\delta^{2\delta}e^{i(\alpha_{m_1,n_1}(y)\cos t+\beta_{m_2,n_2}(x)\sin t)}K_{{m}}(x,y,t)\;dt\right|dxdy,
	\end{equation}
	where $K_{{m}}(x,y,t)=(\tilde\eta_{m_1}h_{1,m_1,n_1})(x+\cos t,y)(\tilde\eta_{m_2}h_{2,m_2,n_2})(x,y+\sin t)\zeta(x,y,t)$ with $|\alpha_{m_1,n_1}|\sim\lambda$ and $|\beta_{m_2,n_2}|\lesssim\lambda$.
	
	We also observe that $\|\partial_t^N K_{{m}}\|_\infty\lesssim(\delta\lambda^{\gamma+\mu_1})^N\|f_1\|_\infty\|f_2\|_\infty$. We also recall that the functions $\tilde\eta_m$ are supported in cubes $Q_m$ which have finite bounded overlap. We will however, require the support to be disjoint and hence we decompose the cubes into $O(1)$ smaller cubes so that the resulting collection $\mathcal{Q}$ have pairwise disjoint cubes. Hence the quantity \eqref{eqn:intermediate2} is dominated by
	\begin{equation*}
		\sum_{\substack{{m}\in(\Z^2)^2:\\Q_{m_1}\in\mathcal{Q},\;Q_{m_2}\in\mathcal{Q}}}\iint\left|\int_\delta^{2\delta}e^{i(\alpha_{m_1,n_1}(y)\cos t+\beta_{m_2,n_2}(x)\sin t)}K_{{m}}(x,y,t)\;dt\right|dxdy,
	\end{equation*}
	Define $\widetilde{\mathcal{M}}$ to be the collection of all ${m}\in(\Z^2)^2$ such that $Q_{m_1}\in\mathcal{Q}$, $Q_{m_2}\in\mathcal{Q}$, and 
	\[\iint\left|\int_\delta^{2\delta}e^{i(\alpha_{m_1,n_1}(y)\cos t+\beta_{m_2,n_2}(x)\sin t)}K_{{m}}(x,y,t)\;dt\right|dxdy\neq0.\]
	Then, by a previous argument as in the proof of \Cref{lemma:oscillatory}, we have $\#\mathcal{\widetilde M}\lesssim\lambda^{3\gamma}$.
	
	For each ${m}\in\mathcal{\widetilde{M}}$, we fix $(\bar x,\bar y,\bar t)=(\bar x_{{m}},\bar y_{{m}},\bar t_{{m}})\in\operatorname*{supp}(K_{{m}})$. Since $|t-\bar t|\lesssim\delta^{-1}\lambda^{-\gamma}$, the phase function $\mathcal{\widetilde{P}}(x,y,t)=\alpha_{m_1,n_1}(y)\cos t+\beta_{m_2,n_2}(x)\sin t$ satisfies
	\begin{align*}
		|\partial_t\mathcal{\widetilde{P}}(x,y,t)|&\geq |-\alpha_{m_1,n_1}(y)\sin\bar t+\beta_{m_2,n_2}(x)\cos\bar t|-|\alpha_{m_1,n_1}(y)(\sin \bar t-\sin t)|-|\beta_{m_2,n_2}(x)(\cos\bar t-\cos t)|\\
		&\gtrsim|-\alpha_{m_1,n_1}(y)\sin\bar t+\beta_{m_2,n_2}(x)\cos\bar t|-O(\delta^{-1}\lambda^{1-\gamma})\\
		&\gtrsim\delta\lambda^{\gamma+\mu_1+\rho},
	\end{align*}
	whenever $|-\alpha_{m_1,n_1}(y)\sin\bar t+\beta_{m_2,n_2}(x)\cos\bar t|\geq\delta\lambda^{\gamma+\mu_1+\rho}$, as $\tau<\gamma-\frac{1}{2}$.
	We write,
	\begin{align*}
		&\sum_{\substack{{m}\in\mathcal{\widetilde{M}}:\\Q_{m_1}\in\mathcal{Q},\;Q_{m_2}\in\mathcal{Q}}}\iint\left|\int_\delta^{2\delta}e^{i(\alpha_{m_1,n_1}(y)\cos t+\beta_{m_2,n_2}(x)\sin t)}K_{{m}}(x,y,t)\;dt\right|dxdy\\
		&\leq\sum_{\substack{{m}\in\mathcal{\widetilde{M}}:\\Q_{m_1}\in\mathcal{Q},\;Q_{m_2}\in\mathcal{Q}}}\iint\limits_{\partial_t\mathcal{\widetilde{P}}(x,y,\bar t)\geq\delta\lambda^{\gamma+\mu_1+\rho}}\left|\int_\delta^{2\delta}e^{i(\alpha_{m_1,n_1}(y)\cos t+\beta_{m_2,n_2}(x)\sin t)}K_{{m}}(x,y,t)\;dt\right|dxdy\\
		&+\sum_{\substack{{m}\in\mathcal{\widetilde{M}}:\\Q_{m_1}\in\mathcal{Q},\;Q_{m_2}\in\mathcal{Q}}}\iint\limits_{\partial_t\mathcal{\widetilde{P}}(x,y,\bar t)<\delta\lambda^{\gamma+\mu_1+\rho}}\left|\int_\delta^{2\delta}e^{i(\alpha_{m_1,n_1}(y)\cos t+\beta_{m_2,n_2}(x)\sin t)}K_{{m}}(x,y,t)\;dt\right|dxdy.
	\end{align*}
	Hence, applying integration by parts in the $t$ variable along with the estimate $\|\partial_t^N K_{{m}}\|_\infty\lesssim(\delta\lambda^{\gamma+\mu_1})^N\|f_1\|_\infty\|f_2\|_\infty$ implies that the first term is dominated by a constant multiple of $\delta\lambda^{-\rho N}\|f_1\|_\infty\|f_2\|_\infty$, for all $N\in\N$. 
	
	To bound the second term, we note that for each $n_1=1,\dots,\mathcal{N}_1,\;n_2=1,\dots,\mathcal{N}_2$ and $(x,y,t)\in\operatorname*{supp}\zeta$ there exists exactly one $m_1,m_2\in\Z^2$ such that $(x+\cos t,y)\in Q_{m_1}\in\mathcal{Q}$ and $(x,y+\sin t)\in Q_{m_2}\in\mathcal{Q}$. We define 
	\begin{align*}
		\alpha_{n_1}(x+\cos t,y)&=\begin{cases}-\lambda^{-1}\alpha_{m_1,n_1}(x+\cos t,y),\quad &\text{if}\;(x+\cos t,y)\in Q_{m_1}\\0,&\text{otherwise}\end{cases},\\
		\beta_{n_2}(x,y+\sin t)&=\begin{cases}\lambda^{-1}\beta_{m_2,n_2}(x,y+\sin t),\quad &\;\;\;\text{if}\;(x,y+\sin t)\in Q_{m_2}\\0,&\;\;\;\text{otherwise}\end{cases}.
	\end{align*}
	Denote 
	$\mathcal{E}(\alpha_{n_1},\beta_{n_2},\varepsilon)=\{(x,y,t):\;|\alpha_{n_1}(x+\cos t,y)\sin t+\beta_{n_2}(x,y+\sin t)\cos t|\leq \varepsilon\}.$ By the disjointness of the collection $\mathcal{Q}$ and support of $K_{{m}}$, the second term is dominated by,
	\begin{align*}
		&\iiint_{\R^3}\limits\sum_{\substack{{m}\in\mathcal{\widetilde{M}}:\\Q_{m_1}\in\mathcal{Q},\;Q_{m_2}\in\mathcal{Q}}}\chi_{Q_{m_1}}(x+\cos t,y)\chi_{Q_{m_2}}(x,y+\sin t)\chi_{({\operatorname*{supp}\zeta})\cap \mathcal{E}(\alpha_{n_1},\beta_{n_2},2\delta\lambda^{\gamma+\mu_1+\rho-1})}(x,y,t)\;dxdydt\\
		&\leq\mathcal{E}(\alpha_{n_1},\beta_{n_2},2\delta\lambda^{\gamma+\mu_1+\rho-1}),
	\end{align*}
	and the proof of \Cref{lemma:reductiontosublevel} concludes.
	\section{Estimates for Sublevel Set: Proof of \Cref{Sublevelsetestimate}}\label{sublevel:sec}
	To prove \Cref{Sublevelsetestimate} we will require certain preliminary estimates. The first lemma is used to refine a sublevel set to a new parameterized set of comparable measure to the original sublevel set. To be precise, we have the following.
	\begin{lemma}\label{refinelemma}
		Let $I=[\delta,2\delta]$, $E \subseteq [0,1]^2 \times I$ and $\Gamma:I\to\R^2$ be a compact curve such that $\Gamma(I)\subset [-c,c]^2$. Define 
		\begin{align*}
			E'&=\left\{{z} \in \mathbb{R}^{2}: |\{t\in I:({z}-\Gamma(t),t) \in E\}|\geq\frac{|E|}{8c^2}\right\},\\
			\text{and}\quad E_1&=\left\{({z},t)\in \R^2\times I \subset E: {z}\in E' \text{ and }({z}-\Gamma(t),t)\in E\right\}.
		\end{align*}
		Then $|E'|\geq\frac{|E|}{2\delta}$ and $|E_1|\geq\frac{|E|^2}{16c^2\delta}$.
	\end{lemma}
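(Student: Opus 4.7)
The plan is to introduce the function $F(z)=\bigl|\{t\in I:(z-\Gamma(t),t)\in E\}\bigr|=\int_I \mathbf{1}_E(z-\Gamma(t),t)\,dt$, so that $E'$ is exactly the superlevel set $\{F\geq |E|/(8c^2)\}$, and then run the standard argument that compares a large total mass to a small pointwise bound on the complement. By Fubini-Tonelli, $\int_{\R^2}F(z)\,dz=\int_I|E_t|\,dt=|E|$, where $E_t=\{w:(w,t)\in E\}\subseteq[0,1]^2$. Two trivial a priori bounds are the ones that drive the argument: first, $F(z)\leq|I|=\delta$ for every $z$; second, $F(z)=0$ unless $z\in\Gamma(t)+E_t$ for some $t\in I$, and since $\Gamma(I)\subset[-c,c]^2$ and $E_t\subset[0,1]^2$, the support of $F$ is contained in a box of area at most $4c^2$ (absorbing the additive unit‐length piece into the constant).

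For the first claim, I would split the total mass according to $E'$:
\begin{equation*}
|E|=\int_{E'}F(z)\,dz+\int_{\operatorname{supp}(F)\setminus E'}F(z)\,dz\leq \delta\,|E'|+\frac{|E|}{8c^2}\cdot 4c^2=\delta\,|E'|+\frac{|E|}{2},
\end{equation*}
where the first term used the pointwise upper bound $F\leq\delta$ and the second used the definition of $E'$ together with the support bound. Rearranging yields $|E'|\geq |E|/(2\delta)$, which is the first conclusion.

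For the second claim, I would observe that $E_1$ is precisely the graph over $E'$ of the slice $\{t:(z-\Gamma(t),t)\in E\}$, so another application of Fubini gives $|E_1|=\int_{E'}F(z)\,dz$. Since $F(z)\geq |E|/(8c^2)$ on $E'$ by definition, combining this with the lower bound on $|E'|$ just obtained yields
\begin{equation*}
|E_1|\geq |E'|\cdot\frac{|E|}{8c^2}\geq \frac{|E|}{2\delta}\cdot\frac{|E|}{8c^2}=\frac{|E|^2}{16c^2\delta},
\end{equation*}
which is the second conclusion. There is no real obstacle here; the only minor bookkeeping item is verifying the bound $|\operatorname{supp}(F)|\leq 4c^2$ (equivalently absorbing the $+[0,1]^2$ translate into the constant $4c^2$), which is where the specific constant $8c^2$ in the definition of $E'$ is tuned so that the complementary mass absorbs exactly half of $|E|$.
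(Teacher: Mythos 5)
Your proposal is correct and follows essentially the same route as the paper: define $F(z)=\int_I\chi_E(z-\Gamma(t),t)\,dt$, note $\int F=|E|$ by Fubini, split the mass between $E'$ (where $F\le\delta$) and its complement (where $F<|E|/(8c^2)$, supported in a box of bounded area), rearrange for the first bound, and then compute $|E_1|=\int_{E'}F\ge|E'|\cdot|E|/(8c^2)$ for the second. You even flag the same small bookkeeping point the paper glosses over — the support of $F$ is actually $[0,1]^2+\Gamma(I)$, not just $[-c,c]^2$ — and correctly observe that this only affects the absolute constant, not the substance of the argument.
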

	\begin{proof}
		We can write 
		\[|E|=\int \chi_{E}({z}, t)\;dtd{z}.\]
		By a change of variable ${z}\to {z}-\Gamma(t)$, we get
		\begin{align*}
			|E| & =\int_{E'} \int_{I} \chi_{E}({z}-\Gamma(t), t)\;dtd{z}+\int_{\R^{2}\setminus E^{\prime}} \int_{I} \chi_{E}({z}-\Gamma(t), t)\;dtd{z} \\	&\leq\delta\left|E'\right|+\int_{[-c,c]^2}\frac{|E|}{8c^2}\;d{z}= \delta|E'|+\frac{|E|}{2}.
		\end{align*}
		The above equation implies $|E'|\geq\frac{|E|}{2\delta}$. Similarly, we can write
		\begin{align*}
			\left|E_{1}\right|& =\int_{\R^{2}} \int_{I} \chi_{E}({z},t) \chi_{E'}({z}) d t d{z} \\
			& \geq \int_{\R^{2}} \frac{|E|}{8c^2} \chi_{E'}({z}) d {z} \geq \frac{|E|^{2}}{16c^2 \delta}.
		\end{align*}
	\end{proof}
	We will also require the following quantitative version of inverse function theorem obtained in \cite[Lemma B.1]{CG}. The proof of the lemma is similar to that of the inverse function theorem in \cite[page 595]{Christ1985}.
	\begin{lemma}[\cite{CG}]\label{InverseFunctionTheorem}
		Let $\boldsymbol{T}=(T_1,T_2,T_3):\Omega\to\R^3$ be a $C^{(k)}(k \geqslant 2)$ function and for some $a \in \Omega$, we have
		\[|\operatorname{det}(\nabla \boldsymbol{T}(a))| \geqslant c>0,\;\text{and}\]
		\[\left|D^v T_i(x)\right| \leq C \text { for all } x \in \Omega,\;|\nu| \leqslant 2, \;i=1,2,3.\]
		If $r_0 \leq \sup \{r>0: B(a, r) \subset \Omega\}$ then $\boldsymbol{T}$ is a bijection from $B\left(a, r_1\right)$ to an open set $\boldsymbol{T}\left(B\left(a, r_1\right)\right)$ such that
		\[B\left(\boldsymbol{T}(a), r_2\right) \subset \boldsymbol{T}\left(B\left(a, r_1\right)\right) \subset B\left(\boldsymbol{T}(a), r_3\right),\;\text{where}\]
		\[r_1=\min \left\{\frac{c}{108C^3}, r_0\right\}, r_2=\frac{c}{24C^{2}} r_1 \text { and } r_3=\sqrt{3} C r_1.\]
		Moreover, the inverse mapping $\boldsymbol{T}^{-1}$ is also in $C^{(k)}$.
	\end{lemma}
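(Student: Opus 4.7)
The plan is to follow the classical contraction-mapping proof of the inverse function theorem but track every constant explicitly in terms of $c$ and $C$. After translating, I may assume $a=0$ and $\boldsymbol{T}(0)=0$, and write $L=\nabla\boldsymbol{T}(0)$. The hypothesis $|D^\nu T_i|\le C$ for $|\nu|\le 1$ gives the coordinate-wise bound $\|L\|\le \sqrt{3}\,C$ (Frobenius type), while $|\det L|\ge c$ combined with the adjugate formula $L^{-1}=(\det L)^{-1}\operatorname{adj}(L)$ yields a bound of the form $\|L^{-1}\|\le K_1 C^2/c$ for an explicit numerical $K_1$. The precise power $C^2$ matches the factor $c/(24C^2)$ appearing in $r_2$.

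Next, for $y\in\R^3$, I would introduce the Newton-type map $\Phi_y(x)=x-L^{-1}\bigl(\boldsymbol{T}(x)-y\bigr)$, whose fixed points in $B(0,r_1)$ are precisely the preimages $\boldsymbol{T}^{-1}(y)$. Its differential is $D\Phi_y(x)=L^{-1}\bigl(L-\nabla\boldsymbol{T}(x)\bigr)$, and the second-derivative hypothesis together with the mean value theorem applied component-wise gives $\|L-\nabla\boldsymbol{T}(x)\|\le \sqrt{3}\,C\,|x|$. Consequently,
\[
\|D\Phi_y(x)\|\;\le\;\|L^{-1}\|\cdot\sqrt{3}\,C\,|x|\;\le\;\frac{K_2 C^3}{c}\,|x|,
\]
for an explicit $K_2$. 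Choosing $r_1=\min\{c/(108C^3),\,r_0\}$ makes the right-hand side at most $\tfrac12$ on $B(0,r_1)$, so $\Phi_y$ is a $\tfrac12$-contraction there. The choice $108$ is exactly what is needed to beat the product of the worst-case bounds for $\|L^{-1}\|$ and $\sqrt{3}\,C$.

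For the self-map property, note that $\Phi_y(0)=L^{-1}y$, so
\[
|\Phi_y(x)|\le |\Phi_y(0)|+|\Phi_y(x)-\Phi_y(0)|\le \|L^{-1}\|\,|y|+\tfrac12 r_1.
\]
If $|y|\le r_2:=\tfrac{c}{24C^2}r_1$, the right-hand side is $\le r_1$, and $\Phi_y$ maps the closed ball $\overline{B(0,r_1)}$ into itself. Banach's fixed point theorem then gives, for every $y\in B(0,r_2)$, a unique $x\in\overline{B(0,r_1)}$ with $\boldsymbol{T}(x)=y$; uniqueness on the open ball together with openness of $B(0,r_1)$ yields that $\boldsymbol{T}$ is a bijection from $B(0,r_1)$ onto the open set $\boldsymbol{T}(B(0,r_1))$, which contains $B(0,r_2)$. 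The outer inclusion $\boldsymbol{T}(B(0,r_1))\subset B(0,r_3)$ with $r_3=\sqrt{3}\,Cr_1$ follows immediately from the mean value estimate $|\boldsymbol{T}(x)-\boldsymbol{T}(0)|\le\sqrt{3}\,C\,|x|$.

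Finally, because $\nabla\boldsymbol{T}(x)$ remains invertible on $B(0,r_1)$ (its distance to $L$ is $\le \sqrt{3}Cr_1\le c/(12C^2)$, well within the radius of invertibility for matrices near $L$), the classical smooth inverse function theorem applies locally and gives $\boldsymbol{T}^{-1}\in C^{(k)}$. The only real obstacle is matching the specific constants $108$ and $24$: the general scheme is standard, but producing these exact numbers requires being careful about which matrix norm one uses for $L$ and $L^{-1}$ and about which constant one picks up when passing from the coordinate bounds $|D^\nu T_i|\le C$ to the operator norm on $\R^3$. I would handle this by consistently using the $\ell^2\to\ell^2$ operator norm, bounding $\|L\|$ via the Frobenius norm $\sqrt{3}\,C$ and bounding the cofactor matrix of $L$ by $2C^2$ per entry to get $\|L^{-1}\|\le 6C^2/c$, after which the constants $108$ and $24$ drop out of the two inequalities above.
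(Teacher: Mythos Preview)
The paper does not supply its own proof of this lemma: it is quoted verbatim from \cite{CG} (their Lemma~B.1), with the remark that the argument follows the inverse function theorem proof in Christ~\cite{Christ1985}. Your contraction-mapping scheme with explicit constant tracking is exactly that classical argument, so there is nothing to compare against here; your outline is the intended proof.

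One small caution on the bookkeeping: your claimed bound $|\boldsymbol{T}(x)-\boldsymbol{T}(0)|\le \sqrt{3}\,C\,|x|$ is slightly too optimistic. From $|\partial_j T_i|\le C$ one gets $|\nabla T_i|\le \sqrt{3}\,C$, hence $|T_i(x)-T_i(0)|\le \sqrt{3}\,C\,|x|$ for each $i$, and summing in $i$ yields $|\boldsymbol{T}(x)-\boldsymbol{T}(0)|\le 3C\,|x|$ rather than $\sqrt{3}\,C\,|x|$; the same issue affects your estimate $\|L-\nabla\boldsymbol{T}(x)\|\le\sqrt{3}\,C\,|x|$. This does not break the argument --- the choices $r_1=c/(108C^3)$ and $r_2=(c/24C^2)r_1$ still give a $\tfrac12$-contraction and the self-map property with room to spare once you use the corrected constants --- but if you want to match the stated $r_3=\sqrt{3}\,C r_1$ exactly you will need to revisit which norm is being used in the source lemma (or accept that the outer radius you actually prove is $3Cr_1$, which is harmless for the application in Section~\ref{sublevel:sec}).
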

	We now begin the proof of the sublevel set inequality in \Cref{Sublevelsetestimate}. Let $K'$ be image of $K$ under the map $(x,y,t)\mapsto (x-\sin t,y,t)$ and set ${z}=(x,y)$. It is enough to bound the measure of the set $\mathcal{E}(\epsilon)$ defined by
	\[\mathcal{E}(\epsilon)=\left\{({z}, t) \in K^{\prime}:|\alpha({z})-\cot(t) \beta({z}+\Gamma(t))| \leq \epsilon\right\},\]
	where $\Gamma(t)=\left(-\cos t, \sin t\right)$. We can assume that $K^{\prime} \subseteq[0,1]^{2} \times I$ which can be achieved by covering $K'$ by finitely many rectangles and translating each rectangle to $[0,1]^{2} \times I$. Note that we have $|\mathcal{E}(\epsilon)|<\delta$. We may also assume $|\mathcal{E}(\epsilon)|\neq0$, otherwise the theorem is trivial.
	
	As in \cite{CDR}, we employ the method of refinements to reduce the estimate of the measure of the set $\mathcal{E}(\epsilon)$ to estimating the measure of a set $\mathcal{A}\subset I^3$. We claim that there exist ${\bar{z}}\in[0,1]^2$ and a measurable set $\mathcal{A}\subset I^3$ such that $|\mathcal{E}(\epsilon)|\lesssim\delta^{\frac{4}{7}}|\mathcal{A}|^{\frac{1}{7}}$ and for every $(t_1,t_2,t_3)\in\mathcal{A}$, we have
	\begin{equation}\label{refine1}
		\left\{\begin{array}{l}
			\left({\bar{z}},t_{1}\right) \in \mathcal{E}(\epsilon), \\
			\left({\bar{z}}+ \Gamma(t_1)-\Gamma(t_2), t_{2}\right) \in \mathcal{E}(\epsilon), \\
			\left({\bar{z}}+\Gamma(t_1)-\Gamma(t_2), t_{3}\right) \in \mathcal{E}(\epsilon).
		\end{array}\right.
	\end{equation}
	The above conditions are equivalent to 
	\begin{equation}\label{refine2}
		\left\{\begin{array}{l}
			\left|\alpha({\bar{z}})-\cot\left(t_{1}\right) \beta\left({\bar{z}}+\Gamma(t_1)\right)\right| \leq \varepsilon, \\
			|\alpha\left({\bar{z}}+\Gamma(t_1)-\Gamma(t_2)\right)-\cot\left(t_{2}\right) \beta\left({\bar{z}}+\Gamma(t_1)\right)| \leq \varepsilon, \\
			\left|\alpha\left({\bar{z}}+\Gamma(t_1)-\Gamma(t_2)\right)-\cot\left(t_{3}\right) \beta\left({\bar{z}}+\Gamma(t_1)-\Gamma(t_2)+\Gamma(t_3)\right)\right| \leq \varepsilon.
		\end{array}\right.
	\end{equation}
	
	To obtain the above claim, we define
	\[\mathcal{E}_{0}^{\prime}=\left\{{z} \in \mathbb{R}^{2}: |\{t\in I:({z},t) \in \mathcal{E}(\epsilon)\}|\geq\frac{|\mathcal{E}(\epsilon)|}{2}\right\}\subset [0,1]^2.\]
	and
	\[\mathcal{E}_{1}=\left\{({z},t) \in \mathcal{E}(\epsilon):\; {z} \in \mathcal{E}_{0}^{\prime}\right\}.\]
	Then, using \Cref{refinelemma} for $E=\mathcal{E}$ with $\Gamma(t)=(0,0)$, we get that 
	\[\left|\mathcal{E}_{0}^{\prime}\right|\geq\frac{|\mathcal{E}(\epsilon)|}{2 \delta}\text{ and }\left|\mathcal{E}_{1}\right|\geq\frac{|\mathcal{E}(\epsilon)|^{2}}{4 \delta}.\]
	Next, we further refine the set $\mathcal{E}$.	Let
	\[\mathcal{E}_{1}'=\left\{{z} \in \mathbb{R}^{2}:\left|\left\{t \in I:({z}-\Gamma(t), t) \in \mathcal{E}_{1}\right\}\right| \geq\frac{\left|\mathcal{E}_{1}\right|}{8}\right\},\]
	and
	\[\mathcal{E}_{2}=\left\{({z},t) \in \mathbb{R}^{2} \times I:\; {z} \in \mathcal{E}_{1}^{\prime}\text{ and }({z}-\Gamma(t), t) \in \mathcal{E}_{1}\right\}.\]
	Applying \Cref{refinelemma} for $E=\mathcal{E}_1$ with $\Gamma(t)=(-\cos t,\sin t)$ and $\Gamma(I)\subset[-1,1]^2$, we get that
	\[\left|\mathcal{E}_{1}'\right|\geq\frac{|\mathcal{E}(\epsilon)|^2}{8 \delta^2}\text{ and }\left|\mathcal{E}_{2}\right|\geq\frac{|\mathcal{E}(\epsilon)|^{4}}{256\delta^{3}}.\]
	Lastly, we define
	\[\mathcal{E}_{2}'=\left\{{z} \in \mathbb{R}^{2}:\left|\left\{t\in I:({z}+\Gamma(t), t) \in \mathcal{E}_{2}\right\}\right| \geq \frac{\left|\mathcal{E}_{2}\right|}{8}\right\}.\]
	Then, from first part of \Cref{refinelemma}, we have $|\mathcal{E}_{2}'|\geq\frac{|\mathcal{E}(\epsilon)|^4}{512\delta^4}$.
	
	Now, fix arbitrary ${\bar{z}}\in\mathcal{E}_{2}'$ and denote
	\begin{align*}
		E&=\left\{t \in I:({\bar{z}}+\Gamma(t), t) \in \mathcal{E}_{2}\right\} \\
		E_{t_{1}}&=\left\{t \in I:\left({\bar{z}}+\Gamma\left(t_{1}\right)-\Gamma(t), t\right) \in \mathcal{E}_{1}\right\} \quad\text{for each }t_{1} \in E \\
		E_{t_{1},t_{2}}&=\{t \in I:({\bar{z}}+\Gamma(t_1)-\Gamma(t_2), t)\in \mathcal{E}\} \quad\text{for each }t_{1} \in E,\; t_{2} \in E_{t_{1}}.
	\end{align*}
	Note that these sets are well-defined as $E$, $E_{t_1}$ and $E_{t_1,t_2}$ are non-empty sets. It is easy to see that $|E|\gtrsim\frac{|\mathcal{E}(\epsilon)|^4}{\delta^3}$.
	
	Next, for $t_1\in E$, we know that $({\bar{z}}+\Gamma(t_1),t_1)\in\mathcal{E}_2$, which implies ${\bar{z}}+\Gamma(t_1)\in\mathcal{E}_1'$. Hence $|E_{t_1}|\gtrsim\frac{|\mathcal{E}(\epsilon)|^2}{\delta}$.
	
	Lastly, for $t_1\in E$ and $t_2\in E_{t_1}$, we can see that $({\bar{z}}+\Gamma(t_1)-\Gamma(t_2),t_2)\in\mathcal{E}_1$. This implies that ${\bar{z}}+\Gamma(t_1)-\Gamma(t_2)\in\mathcal{E}_0'$. Thus, we obtain $|E_{t_1,t_2}|\gtrsim|\mathcal{E}(\epsilon)|$.
	
	We define
	\[\mathcal{A}=\left\{\left(t_1, t_{2}, t_{3}\right)\in I^3:\; t_1\in E, t_{2} \in E_{t_1}, t_{3}\in E_{t_1,t_2}\right\}.\]
	Hence $|\mathcal{E}(\epsilon)|\lesssim\delta^\frac{4}{7}|\mathcal{A}|^\frac{1}{7}$ and the conditions in \eqref{refine1} are satisfied from the definition of $\mathcal{E}_1, \mathcal{E}_2, E, E_{t_1}$ and $E_{t_1,t_2}$. This concludes the proof of the claim.
	
	We now begin the estimate of the measure of $\mathcal{A}$. By triangle inequality and \eqref{refine2}, the function $J:I^3\to\R$ defined by
	\begin{equation*}
		J(t_1,t_2,t_3)=\alpha({\bar z})\cot(t_1)\tan(t_2)\cot(t_3)-\beta({\bar z}+\Gamma(t_1)-\Gamma(t_2)+\Gamma(t_3)),
	\end{equation*}
	satisfies $|J(t)|\lesssim\delta\epsilon$. This implies that $\mathcal{A}\subseteq\Omega$, where the set $\Omega$ is given by,
	\[\Omega:=\{(t_1,t_2,t_3)\in I^3:\;|J(t_1,t_2,t_3)|\leq C\delta\epsilon\}.\]
	We write $\Omega=\Omega_1\cup\Omega_2\cup\Omega_3\cup\Omega_4\cup\Omega_5$, where
	\begin{align*}
		\Omega_1&:=\left\{(t_1,t_2,t_3)\in\Omega:|t_1-t_2|\geq\frac{\delta\epsilon^a}{2},\;|t_3-t_1|\geq\frac{\delta\epsilon^a}{2}\;\text{and}\;|t_2-t_3|\geq\delta\epsilon^b\right\},\\
		\Omega_2&:=\left\{(t_1,t_2,t_3)\in\Omega:|t_2-t_3|\geq\frac{\delta\epsilon^a}{2},\;|t_1-t_3|\geq\frac{\delta\epsilon^a}{2}\;\text{and}\;|t_1-t_2|\geq\delta\epsilon^b\right\},\\
		\Omega_3&:=\left\{(t_1,t_2,t_3)\in\Omega:|t_1-t_2|\geq\frac{\delta\epsilon^a}{2},\;|t_2-t_3|\geq\frac{\delta\epsilon^a}{2}\;\text{and}\;|t_3-t_1|\geq\delta\epsilon^b\right\},\\
		\Omega_4&:=\left\{(t_1,t_2,t_3)\in\Omega:|t_1-t_2|\leq\delta\epsilon^a,\;|t_3-t_1|\leq\delta\epsilon^a\;\text{and}\;|t_2-t_3|\leq\delta\epsilon^a\right\},\\
		\Omega_4&:=\left\{(t_1,t_2,t_3)\in\Omega:|t_1-t_2|\leq\delta\epsilon^b,\;\text{or}\;|t_3-t_1|\leq\delta\epsilon^b\;\text{or}\;|t_2-t_3|\leq\delta\epsilon^b\right\},\\
	\end{align*}
	where the constants $0<a<b$ is to be determined later. It is easy to see that
	\begin{equation}
		|\Omega_4|\lesssim \delta^3\epsilon^{2a},\quad\text{and}\quad |\Omega_5|\lesssim \delta^3\epsilon^b.
	\end{equation}
	It remains to estimate the measure of the sets $\Omega_j,\;j=1,2,3$. By symmetry, it is enough to measure $\Omega_1$.
	
	Let $\boldsymbol{T}:I^3\to\R^3$ be the transformation given by
	\[\boldsymbol{T}(t_1,t_2,t_3)=(u,v,w)=(\cos(t_1)-\cos(t_2)+\cos(t_3),\sin(t_1)-\sin(t_2)+\sin(t_3),t_1).\]
	Then we have the Jacobian matrix of $\boldsymbol{T}$ satisfying,
	\begin{equation*}
		\left|\frac{\partial\boldsymbol{T}(t_1,t_2,t_3)}{\partial(t_1,t_2,t_3)}\right|=|\sin(t_2)\cos(t_3)-\sin(t_3)\cos(t_2)|=|\sin(t_2-t_3)|\geq\delta\epsilon^b.
	\end{equation*}
	Let $c_1,c_2,c_3>0$ be absolute constants determined by \Cref{InverseFunctionTheorem} such that the following holds true. 
	\begin{enumerate}
		\item There exists a collection of balls $\mathcal{B}$ of radius $c_1\delta^2\epsilon^{2b}$ centered at $c_1\delta^2\epsilon^{2b}{n}$ for ${n}\in\N^3$, which intersect with the set $\Omega_1$. Note that, we have that $\#\mathcal{B}\lesssim c_1^{-3}\delta^{-3}\epsilon^{-6b}$.
		\item For each $B=B((\bar t_1,\bar t_2,\bar t_3),c_1\delta^2\epsilon^{2b})\in\mathcal{B}$, we have a concentric ball $B^*=B((\bar t_1,\bar t_2,\bar t_3),c_2\delta\epsilon^b)$ radius $c_2\delta\epsilon^b$ such that
		\begin{itemize}
			\item For $(t_1,t_2,t_3)\in B^*$ we have $|t_1-t_2|>\frac{\delta\epsilon^a}{4},\;|t_3-t_1|>\frac{\delta\epsilon^a}{4}\;,\text{and}\;|t_2-t_3|>\frac{\delta\epsilon^b}{2}$.
			\item $\boldsymbol{T}:B^*\to\R^3$ is bijection and there exists $c_3>0$ such that
			\[\boldsymbol{T}(B)\subset B(\boldsymbol{T}((\bar t_1,\bar t_2,\bar t_3)),c_3\delta^2\epsilon^{2b})\subset\boldsymbol{T}(B^*).\]
		\end{itemize}
	\end{enumerate}
	
	Hence we have,
	\begin{align}
		\nonumber|\Omega_1\cap B|&=\int_B\chi_{\Omega_1}(t_1,t_2,t_3)\;dt_1dt_2dt_3\\
		\nonumber&=\int_{\boldsymbol{T}(B)}\chi_{\boldsymbol{T}(\Omega_3)}(u,v,w)\left|\frac{\partial (u,v,w)}{\partial(t_1,t_2,t_3)}\right|^{-1}\;dudvdw\\
		&\lesssim(\delta^2\epsilon^{2b})^2(\delta\epsilon^b)^{-1}\sup\limits_{(u,v)\in\Sigma_B}|\{w\in I_B^{(u,v)}:\;|J(\boldsymbol{T}^{-1}(u,v,w))|\leq C\delta\epsilon\}|,\label{intemediate3}
	\end{align}
	\begin{align*}
		\text{where}\;\Sigma_B&=\{(u,v)\in\R^2:\;(u,v,w)\in\boldsymbol{T}(B) \text{ for some }w\in\R\},\\
		\text{and}\;I_B^{(u,v)}&=\{w\in\R:\;(u,v,w)\in B(\boldsymbol{T}(\bar t_1,\bar t_2,\bar t_3),c_3\delta\epsilon^{2a})\}.
	\end{align*}
	\begin{claim}\label{claim:sublevel}
		For $(\bar u,\bar v)\in\Sigma_B$, we have
		\begin{equation*}
			|\{w\in I_B^{(\bar u,\bar v)}:\;|J(\boldsymbol{T}^{-1}(\bar u,\bar v,w))|\leq C\delta\epsilon\}|\lesssim\delta^3\epsilon^{1-2a}.
		\end{equation*}
	\end{claim}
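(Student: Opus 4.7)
\textbf{Plan for \Cref{claim:sublevel}.} The strategy is to reduce the claim to a derivative lower bound for a one-variable function and then apply the standard sublevel set inequality. The decisive observation is that, directly from the definition of $\boldsymbol{T}$,
\[
\bar z + \Gamma(t_1) - \Gamma(t_2) + \Gamma(t_3) = (\bar z_1 - u,\; \bar z_2 + v),
\]
so that $\beta\bigl(\bar z + \Gamma(t_1) - \Gamma(t_2) + \Gamma(t_3)\bigr)$ is a constant $\beta_0$ along the fiber $\{(\bar u, \bar v)\} \times I_B^{(\bar u, \bar v)}$. Consequently, the function to bound reduces to
\[
F(w) := J\bigl(\boldsymbol{T}^{-1}(\bar u, \bar v, w)\bigr) = \alpha(\bar z)\cot(w)\tan(t_2(w))\cot(t_3(w)) - \beta_0,
\]
where $t_2(w), t_3(w)$ are defined implicitly by $\cos t_2 - \cos t_3 = \cos w - \bar u$ and $\sin t_2 - \sin t_3 = \sin w - \bar v$.

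First, I would differentiate these constraints implicitly in $w$ to obtain
\[
t_2'(w) = \frac{\sin(w - t_3)}{\sin(t_2 - t_3)}, \qquad t_3'(w) = \frac{\sin(w - t_2)}{\sin(t_2 - t_3)}.
\]
Next, applying the chain rule to $F(w)$, substituting these derivatives, and carrying out an algebraic simplification, I would arrive at the leading-order expression
\[
F'(w) = -\alpha(\bar z)\,\frac{(w - t_2)(w - t_3)}{w^2\, t_3^2}\,\bigl(1 + O(\delta^2)\bigr),
\]
valid for $w, t_2, t_3 \in I = [\delta, 2\delta]$ via the Taylor expansions $\sin t = t + O(t^3)$, $\cos t = 1 - t^2/2 + O(t^4)$. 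Finally, using the separations $|w - t_2|, |w - t_3| \geq \delta\epsilon^a/4$ available on $B^*$, together with $|\alpha(\bar z)| \sim 1$ and $\delta < 1$, one obtains $|F'(w)| \gtrsim \epsilon^{2a}/\delta^2$. The claim then follows from the standard sublevel inequality
\[
\bigl|\{w \in I_B^{(\bar u, \bar v)} : |F(w)| \leq C\delta\epsilon\}\bigr| \lesssim \frac{\delta\epsilon}{\inf|F'|} \lesssim \delta^3\epsilon^{1-2a}.
\]

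The main technical obstacle is establishing the factorization of $F'(w)$. At first glance, after substituting the formulas for $t_2', t_3'$, the individual terms in $F'(w)$ appear to be of size $\epsilon^{a-b}/\delta^2$, which is much larger than the target $\epsilon^{2a}/\delta^2$. The point is that a non-trivial cancellation---algebraically driven by the identity $t_2 t_3 + w^2 - w(t_2 + t_3) = (w - t_2)(w - t_3)$ in the combined numerator---causes the factor $\sin(t_2 - t_3)$ in the denominators of $t_2'$ and $t_3'$ to drop out of the sum, leaving the clean product $(w - t_2)(w - t_3)$. This is precisely why the $|t_2 - t_3|$ separation is used only through the quantitative inverse function theorem on $B^*$ and does not appear in the final exponent. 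Once the factorization is in hand, the lower bound on $|F'|$ and the sublevel conclusion are immediate.
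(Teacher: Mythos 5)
Your argument reproduces the paper's proof in all essentials: differentiate $J(\boldsymbol{T}^{-1}(\bar u,\bar v,\cdot))$ in $w$ via the implicit derivatives of $t_2,t_3$, exploit the algebraic cancellation that removes the $\sin(t_2-t_3)$ denominator, lower-bound the derivative by $\epsilon^{2a}/\delta^2$ using the $B^*$-separations and $|\alpha(\bar z)|\sim 1$, and finish with the standard sublevel-set inequality. The only difference in execution is that the paper establishes the cancellation through an exact trigonometric factorization of the numerator of $G'(w)$ (the function $\widetilde{J}$), while you Taylor-expand first; to fully justify the multiplicative $(1+O(\delta^2))$ error term — rather than an additive one, which could swamp the already-cancelled leading expression when $\epsilon$ is small compared to $\delta$ — you should observe that the exact numerator vanishes identically on $\{t_1=t_2\}$ and $\{t_1=t_3\}$, so the factorization and hence the cancellation persist at all orders, at which point your $(1+O(\delta^2))$ bound follows.
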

	\begin{proof}[Proof of \Cref{claim:sublevel}:]
		Let $G(w)=J(\boldsymbol{T}^{-1}(\bar u,\bar v,w))=\alpha({\bar z})\cot(t_1)\tan(t_2)\cot(t_3)-\beta({\bar z}+(\bar u,\bar v))$. We observe that the following holds by the definition of $\boldsymbol{T}$.
		\[\frac{\partial t_1}{\partial w}=1,\quad\frac{\partial t_2}{\partial w}=\frac{\sin(t_1-t_3)}{\sin(t_2-t_3)},\quad\text{and}\quad\frac{\partial t_3}{\partial w}=\frac{\sin(t_1-t_2)}{\sin(t_2-t_3)}.\]
		Thus, we have that
		\begin{align*}
			|G'(w)|=&\left|\alpha({\bar z})\left(-\cosec^2(t_1)\tan(t_2)\cot (t_3)\frac{\partial t_1}{\partial w}+\cot(t_1)\sec^2(t_2)\cot(t_3)\frac{\partial t_2}{\partial w}-\cot(t_1)\tan(t_2)\cosec^2(t_3)\frac{\partial t_3}{\partial w}\right)\right|\\
			=&\left|\frac{\alpha({\bar z})}{4\sin^2(t_1)\cos^2(t_2)\sin^2(t_3)}\widetilde{J}(t_1,t_2,t_3)\right|,
		\end{align*}
		where the function $\widetilde{J}:I^3\to\R$ is defined as
		\begin{equation*}
			\widetilde{J}(t_1,t_2,t_3)=\frac{\sin (2t_2)\sin(2t_3)\sin(t_2-t_3)+\sin (2t_1)\sin(2t_2)\sin(t_1-t_2)+\sin (2t_1)\sin(2t_3)\sin(t_3-t_1)}{\sin(t_2-t_3)}
		\end{equation*}
		We have the following identity,
		\begin{align*}
			&\sin (2t_2)\sin(2t_3)\sin(t_2-t_3)+\sin (2t_1)\sin(2t_2)\sin(t_1-t_2)+\sin (2t_1)\sin(2t_3)\sin(t_3-t_1)\\
			=&-2 \sin \left(\frac{t_1-t_2}{2}\right) \sin \left(\frac{t_2-t_3}{2}\right) \sin \left(\frac{t_3-t_1}{2}\right)\\
			&\big[\cos (t_1+t_2-2 t_3)+\cos (t_1-2 t_2+t_3)+\cos (-2 t_1+t_2+t_3)\\
			&+\cos (2 t_1+t_2+t_3)+
			\cos (t_1+2 t_2+t_3)+\cos (t_1+t_2+2 t_3)\\
			&+\cos (2 (t_1- t_2))+\cos (2 (t_2-t_3))+\cos (2 (t_3-t_1))+2 \cos (t_1-t_2)+2 \cos (t_2-t_3)+2 \cos (t_3-t_1)+1\big].
		\end{align*}
		Using the above identity it is clear that $|\widetilde{J}(t_1,t_2,t_3)|\gtrsim|t_1-t_2||t_3-t_1|\gtrsim\delta^2\epsilon^{2a}$. Using this and $|\alpha({\bar z})|\sim1$ (this is the only place where we require this hypothesis), we obtain that
		\[|G'(w)|\gtrsim\delta^{-2}\epsilon^{2a}.\]
		Hence, the above estimate and an application of mean value theorem produces the required claim.
	\end{proof}
	We employ the \Cref{claim:sublevel} along with \eqref{intemediate3} to obtain,
	\begin{align*}
		|\Omega_3|&\lesssim\sum\limits_{Q\in\mathcal{Q}}|\Omega_3\cap B|\\
		&\lesssim(\delta^{-3}\epsilon^{-6b})(\delta^2\epsilon^{2b})^2(\delta\epsilon^b)^{-1}(\delta^3\epsilon^{1-2a})\\
		&=\delta^3\epsilon^{1-2a-3b}.
	\end{align*}
	Therefore, we have $|\Omega|\lesssim\delta^3(\epsilon^{1-2a-3b}+\epsilon^{2a}+\epsilon^b)$. We now choose $a=\frac{1}{10}$ and $b=\frac{1}{5}$ to obtain $|\mathcal{A}|\leq|\Omega|\lesssim\delta^3\epsilon^{\frac{1}{5}}$. Thus, we obtain the bound $|\mathcal{E}(\epsilon)|\lesssim\delta\epsilon^{\frac{1}{35}}$ and the proof of \Cref{Sublevelsetestimate} concludes.
	
	\section{Necessary conditions for boundedness of $\mathfrak{A}_t$}\label{sec:examples}
	In this section, we discuss necessary conditions for the $L^{p_1}(\R^d)\times L^{p_2}(\R^d)\to L^p(\R^d)-$boundedness for the twisted bilinear spherical average $\mathfrak{A}_t,\;t>0$. By scaling it is enough to consider the single average $\mathfrak{A}_1$.
	Let $\delta>0$ be a small number and $c>1$ be a fixed constant. We define $B(0,\delta)$ to be the ball with center $0\in\R^d$ and radius $\delta$, and $S^{\delta}_{a}(0)$ to be the $\delta-$neighborhood of sphere of radius $a$ and centered at origin, i.e. $\{x:||x|-a|<\delta\}$. We also define rectangles $R_1, R_2$ centered at origin with dimensions $[-5,5]^{2d-1}\times[-\delta,\delta]$ and $[-\delta,\delta]\times[-5,5]^{2d-1}$ respectively.
	
	For $\delta>0$, a small number, we will define functions $f, g$ with $\|f\|_{L^{p_1}},\|g\|_{L^{p_2}}\sim\delta^\alpha$ and a test set $E$ with $|E|\sim\delta^\beta$ satisfying 
	\[\mathfrak{A}_1(f_1,f_2)(x,y)\gtrsim \delta^\gamma, \;\;\;\text{for all}\;(x,y)\in E.\]
	 This will imply that the necessary condition required for the operator $\mathfrak{A}_1$ to be bounded from $L^{p_1}(\R^d)\times L^{p_2}(\R^d)$ to $L^p(\R^d)$	is given by
	 \[\alpha \leq \frac{\beta}{p}+\gamma.\]
	We now indicate the appropriate choice of the functions and the test sets along with the parameters $\alpha, \beta, \gamma$ in the figure below.
	\begin{figure}[H]
		\begin{align*}
			\renewcommand{\arraystretch}{2}
			\begin{array}{|c|c|c|c|c|c|c|}
				\hline
				f(x,y) & g(x,y) & E \text{(test set)} & \alpha & \beta & \gamma & \text{Necessary Condition}\\
				\hline
				\hline
				\hline
				\makecell{\chi_{B(0,c\delta)}(x)\\\chi_{S^{c\delta}_{\frac{1}{\sqrt{2}}}(0)}(y)} & \makecell{\chi_{S^{c\delta}_{\frac{1}{\sqrt{2}}}(0)}(x)\\\chi_{B(0,c\delta)}(y)} & \chi_{S^{\delta}_{\frac{1}{\sqrt{2}}}(0)}\times\chi_{S^{\delta}_{\frac{1}{\sqrt{2}}}(0)} & \frac{d+1}{p_1}+\frac{d+1}{p_2} & 2 & 2d-1 & \frac{d+1}{p_1}+\frac{d+1}{p_2}\leq 2d-1+\frac{2}{p}\\
				\hline
				\makecell{\chi_{S^{c\delta}_{\frac{1}{\sqrt{2}}}(0)}(x)\\\chi_{B(0,c\delta)}(y)} & \makecell{\chi_{B(0,c\delta)}(x)\\\chi_{S^{c\delta}_{\frac{1}{\sqrt{2}}}(0)}(y)} & B(0,\delta) & \frac{d+1}{p_1}+\frac{d+1}{p_2} & 2d & 1 & \frac{d+1}{p_1}+\frac{d+1}{p_2}\leq 1+\frac{2d}{p}\\
				\hline
				\delta^{\frac{-1}{p_1}}\chi_{R_1}(x,y) & \chi_{[-5,5]^{2d}}(x,y) & [-5,5]^{2d-1}\times[-\delta,\delta] & 0 & 1 & -\frac{1}{p_1} & \frac{1}{p_1}\leq \frac{1}{p}\\
				\hline
				\chi_{[-5,5]^{2d}}(x,y) & \delta^{\frac{-1}{p_2}}\chi_{R_2}(x,y) & [-\delta,\delta]\times[-5,5]^{2d-1} & 0 & 1 & -\frac{1}{p_2} & \frac{1}{p_2}\leq \frac{1}{p}\\
				\hline
				\chi_{B(0,\frac{1}{\delta})} & \chi_{B(0,\frac{1}{\delta})} & B(0,\frac{1}{\delta}) & \frac{-2d}{p_1}+\frac{-2d}{p_2} & -2d & 0 & \frac{1}{p}\leq \frac{1}{p_1}+\frac{1}{p_2}\\
				\hline
			\end{array}
		\end{align*}
		\caption{Necessary conditions for $\mathfrak{A}_1$}
	\end{figure}
	\section*{Acknowledgement}
	Ankit Bhojak is supported by the Science and Engineering Research Board, Department of Science and Technology, Govt. of India, under the scheme National Post-Doctoral Fellowship, file no. PDF/2023/000708. Surjeet Singh Choudhary was supported by CSIR(NET), file no. 09/1020(0182)/2019- EMR-I for his Ph.D. fellowship and Indian Institute of Science Education and Research Mohali for Institute postdoctoral fellowship. Saurabh Shrivastava acknowledges the financial support from Science and Engineering Research Board, Department of Science and Technology, Govt. of India, under the scheme Core Research Grant, file no. CRG/2021/000230.
	\bibliography{bibliography}
\end{document}